\theoremstyle{plain}
\newtheorem{theorem}{Theorem}[section]
\newtheorem{lemma}[theorem]{Lemma}
\newtheorem{proposition}[theorem]{Proposition}
\theoremstyle{definition}
\newcommand{\mA}{\mathfrak{A}}
\newcommand{\mB}{\mathfrak{B}}
\newcommand{\mM}{\mathfrak{M}}
\newcommand{\mfo}{\models_{\mathrm{FO}}}
\newcommand{\Uf}{\mathit{Uf}}
\newcommand{\Vg}{\mathit{Vg}}
\newcommand{\Vgp}{\mathit{Vg'}}
\newcommand{\depst}{=\!\!}
\begin{document}
\title{A Double Team Semantics for Generalized Quantifiers}
\author{Antti Kuusisto\footnote{
\noindent
Email:\ \ antti.j.kuusisto@uta.fi}\\
University of Wroc\l aw}

\date{}

\maketitle

\vspace{1cm}

\noindent
We investigate extensions of dependence logic with generalized quantifiers.
We also introduce and investigate the notion of a \emph{generalized atom}.
We define a system of semantics that can accommodate
variants of dependence logic, possibly extended with generalized quantifiers and generalized
atoms, under the same umbrella framework. The semantics is based on pairs of teams, or \emph{double teams}.
We also devise a game-theoretic semantics equivalent to the double team semantics.
We make use of the double team semantics by defining a
logic $\mathrm{DC}^2$, which canonically fuses together
\emph{two-variable dependence logic} $\mathrm{D}^2$ and 
\emph{two-variable logic with counting quantifiers} $\mathrm{FOC}^2$.
We establish that the satisfiability and finite satisfiability problems of $\mathrm{DC}^2$
are complete for $\mathrm{NEXPTIME}$.
%
%
%
%
%
%
%
%
\section{Introduction}
%

%
%

%
\emph{Independence-friendly logic} 
is an extension of first-order logic motivated
by issues concerning Henkin quantifiers
and game-theoretic semantics.
Indepen- dence-friendly logic, also known as $\mathrm{IF}$-logic,
was first defined in \cite{sandu}.
The logic extends first-order logic $\mathrm{FO}$ by quantifiers
of the type $\exists x/\{y_1,...,y_k\}$. The
background intuition concerning the interpretion of
these quantifiers is that when a
formula $\exists x/\{y_1,...,y_k\}\, \varphi$ is
evaluated game-theoretically, then the value
of $x$ is chosen in ignorance of the values of
the variables $y_1,...,y_k$.
While game-theoretic semantics of ordinary first-order logic
gives rise to a game of perfect information, the game for
$\mathrm{IF}$-logic is a game of imperfect information.
In \cite{hodges}, Hodges gave a compositional
semantics for $\mathrm{IF}$-logic.
While ordinary Tarskian semantics for first-order logic is
based on evaluating formulae with respect to single \emph{assignments}
(functions that give values to variables in the domain of a model),
the semantics of Hodges is based on \emph{sets} of assignments. 
In \cite{vaananen}, V\"{a}\"{a}n\"{a}nen introduced
\emph{dependence logic}, which provides a
novel alternative approach to issues concerning independence-friendly logic
and Henkin quantifiers.
Instead of quantifiers of the type 
$\exists x/\{y_1,...,y_k\}$,
%
%
dependence logic extends first-order logic
by novel atomic expressions $\depst(x_1,...,x_k)$, which state that the value
of $x_k$ is determined by the values of $x_1,...,x_{k-1}$.
The compositional
semantics of dependence logic is
similar to Hodges' semantics for $\mathrm{IF}$-logic.
The semantics is formulated in terms of sets of assignments.
V\"{a}\"{a}n\"{a}nen named such sets \emph{teams}, and 
since then, the related semantic framework
has been called \emph{team semantics}.
After the introduction of dependence logic, research
on team semantics has been very active, and a
notably large number of related papers has appeared
in the course of a relatively short period. In addition to
dependence logic, several related logics have been introduced
and studied.
\emph{Independence logic}, introduced in \cite{indep},
extends first-order logic with atoms of the type $x\, \bot\, y$.
The intuitive meaning of this atom is that 
$x$ and $y$ are independent of each other in the sense that 
nothing can be said about the value of $x$ based on the value of $y$,
and vice versa. \emph{Independence logic} 
even allows for atoms $\overline{x}\, \bot_{\overline{z}}\, \overline{y}$,
which state that the tuples $\overline{x}$
and $\overline{y}$ are independent when the values of
the variables in $\overline{z}$ are kept constant; see \cite{indep}
for the formal details.
In \cite{galliani}, Galliani introduces \emph{inclusion logic}.
This is yet a further variant of dependence logic. This logic
extends first-order logic by 
atoms of the type $\overline{x}\subseteq\overline{y}$, which state that
any tuple of values defined by $\overline{x}$
is also a tuple of values defined by $\overline{y}$.
The article \cite{galliani} also defines two separate
systems of team semantics, called \emph{strict}
and \emph{lax} semantics.
The systems differ from each other in their treatment of 
the existential quantifier and disjunction.
In strict semantics, the existential quantifier is treated in
the original way familiar from dependence logic.
A model $\mA$ and a team $X$ satisfy a 
formula $\exists x\, \varphi$ if and only if
it is possible to extend\footnote{Strictly speaking,
if the valuations in $X$ already give an interpretation for $x$,
then the team $X$ is \emph{modified} by altering the assignments
rather than extending them.} each valuation $s\in X$
with a pair $(x,a)$, where $a\in\mathit{Dom}(\mA)$,
such that the resulting extended
team satisfies $\varphi$.
The key issue here is that each valuation $s\in X$ is extended by
\emph{exactly one} pair $(x,a)$ that provides an interpretation of $x$.
In lax semantics, each assignment $s\in X$ can be extended by
more than one pair $(x,a)$, resulting in a whole set of
extensions of the valuation $s$.
For the technical difference between the strict and lax semantics
in their treatment of the disjunction, see \cite{galliani}
or Section \ref{deplog} below.
There are interesting and perhaps
surprizing differences between the lax and strict semantics.
It is shown in \cite{gallianihella} that
with lax semantics, inclusion logic
is equiexpressive with \emph{positive greatest fixed point logic},
and therefore captuers $\mathrm{PTIME}$
in restriction to linearly ordered finite models.
On the other hand, with strict semantics,
inclusion logic captures $\mathrm{NP}$,
as observed in \cite{hannula}.
In addition to extensions of first-order logic
with different kinds of atoms, also
\emph{generalized quantifiers} have been studied in the context of team semantics.
In \cite{engstrom}, Engstr\"{o}m defines a semantics
that can
%
%
accommodate generalized quantifiers in the 
framework of team semantics.
Inter alia, the article \cite{engstrom}
studies \emph{branching
quantifiers} consisting of partially ordered generalized quantifiers.
%
%
%
%
%
%
Investigations in the setting of \cite{engstrom} have been recently continued
for example in the articles \cite{jufe} and \cite{jufeva}.
In this article, we define a semantics that can deal with extensions of
dependence logic and its variants with generalized quantifiers.
Our semantics differs
from the semantics given \cite{engstrom}.
Our semantics is based on \emph{double teams}.
There are several reasons---discussed below---why we believe that the
double team semantics is particularly
\emph{natural, general, and useful}.
The double team sematics we shall define is \emph{fully symmetric}
in the sense that it respects obvious \emph{canonical duality principles}
concerning negation. The double team semantics is also \emph{compositional for negation} in a very natural way.
In investigations related to team semantics, the syntax of the logic investigated is usually given in 
\emph{negation normal form}. This means that
negations are only allowed in front of atomic formulae.\footnote{
In some cases non-first-order atoms cannot be negated at all.}
In the framework of double team semantics, such
syntactic limitations are avoided in a natural way.
In addition to the double team semantics,
we also define a corresponding \emph{canonical game-theoretic semantics},
and prove its equivalence to the double team semantics.
The double team semantics, and its game
theoretic counterpart, provide a suitable
setting for the definition of a
notion of a \emph{minor quantifier}.
This is a slight generalization of
Lindstr\"{o}m's
definition of a generalized
quantifier in \cite{lindstrom}.
The notion of a minor quantifier nicely enables the accommodation of
the lax and strict interpretations
of the existential quantifier under the same
umbrella framework.
The strict and lax
interpretations of the existential quantifier
canonically give rise to two corresponding minor quantifiers.
Furthermore, it turns out that
the ordinary existential quantifier
gives rise to a third minor quantifier different from the
strict and lax quantifiers.
The semantic framework based on double teams
provides a natural setting for the \emph{interpretation of
the meaning} of the strict and lax quantifiers.
In particular, the framework enables the investigation of 
the relationship between ordinary generalized quantifiers
and the strict and lax quantifiers, thereby providing novel \emph{insight} into
the nature of these formal tools that occupy an important role in the
current research in team semantics.
In addition to the notion of a minor quantifier,
we introduce the notion of a \emph{generalized atom}.
Generalized atoms can be used in order to declare properties of
(double) teams. The atoms $\depst(x_1,...,x_k)$, $\overline{x}\, \bot_{\overline{z}}\, \overline{y}$
and $\overline{x}\, \subseteq\, \overline{y}$
%
%
are examples of generalized atoms.
In addition to minor quantifiers,
the double team semantics and its game-theoretic counterpart accommodate generalized atoms
under the same general system of semantics.
Generalized atoms have previously been briefly mentioned in
\cite{kuusisto1} and defined in
the technical report \cite{kuusisto2}.
%

%
%
%
Recent research in team semantics has
%
%
revealed---as one perhaps could expect---that subtle changes in semantic choices, such as using
the lax semantics instead the strict semantics, can give rise to logics with different
expressivities. To understand 
related phenomena better, it definitely makes sense
to study team semantics based systems in a
general unified umbrella framework.
%

%
%

%
In order to make direct use of the generality of the double team semantics,
we define the logic $\mathrm{DC}^2$, which extends
\emph{two-variable dependence logic} $\mathrm{D}^2$ by
counting quantifiers $\exists^{\geq k}$.
%
%
%
We prove that the satisfiability problem of this logic is decidable.
In fact, we show that both the finite and standard satisfiability problems of
$\mathrm{DC}^2$ are $\mathrm{NEXPTIME}$-complete.
The logic $\mathrm{DC}^2$ is an extension of both
two-variable dependence logic $\mathrm{D}^2$ and
\emph{two-variable logic with counting} $\mathrm{FOC}^2$.
It was show in \cite{pratthartmann} that the satisfiability
and finite satisfiability problems of $\mathrm{FOC}^2$
are $\mathrm{NEXPTIME}$-complete.
In \cite{kontinen},
the corresponding problems for
$\mathrm{D}^2$ were shown to 
also be $\mathrm{NEXPTIME}$-complete.
%
%

%
Research on two-variable logics is
currently particularly active.
Recent articles in the field include for example
\cite{
IEEEonedimensional:kieronski,
IEEEonedimensional:charatonic,
IEEEonedimensional:kieronskimichaliszyn,
IEEEonedimensional:kieronskitendera,
IEEEonedimensional:zeume,
IEEEonedimensional:tendera},
and several others.
Mainly the related research has concerned decidability and complexity issues
in restriction to particlar classes of 
structures, and also questions
related to different built-in features and operators that 
increase the expressivity of the base language.
Team semantics has so far been discussed in
this context only in \cite{kontinen}.
The article \cite{kontinen} discusses ordinary
two-variable dependence logic $\mathrm{D}^2$,
which does not include counting quantifiers. In fact, when writing
\cite{kontinen}, no direct semantics for 
counting quantifiers was available in the team semantics
framework.\footnote{Counting quantifiers are first-order definable,
so indirect access to them would have been possible.}
The double team appoach provides an appropriate canonical system of
semantics, and furthermore, facilitates the $\mathrm{NEXPTIME}$-completeness
proof given below. Concerning the proof, our objective is not so much to study
the particular logic $\mathrm{DC}^2$.
Instead, we wish to demonstrate how the double team framework can
\emph{in practise} be used in order to study
fragments of team semantics based logics extended with generalized quantifiers.
%
%

%
Our double team semantics provides a general system 
that can deal with generalized quantifiers as well as
generalized atoms, but on the face of it, the move from single teams to double teams
may seem like an undesirable step towards a more complicated
framework. We claim that this issue is not so simple, for two
reasons. Firstly, the syntax of most variants of dependence logic is
currently given in negation normal form, leading to systems
with more connectives and quantifiers than necessary. Disjunction and conjunction
have to be \emph{both} included as primitive connectives in the logics,
and the same applies to the existential and universal quantifiers.
This leads to longer proofs.
Secondly, we shall in fact briefly discuss below in Section \ref{singleteam} a
semantics which is rather similar to our
double team semantics---facilitating investigations analogous to
those carried out in this article---but formulated in terms of single teams.
%
%
%

%
Finally, it is worth noting that while the double team semantics can be used in
investigations related to
dependence logic and its variants, it is also a
\emph{canonical semantics for ordinary extensions of first-order logic with generalized quantifiers},
i.e., extensions that do not include novel atomic formulae, such as
dependence atoms.
The structure of this article is as follows.
In Sections \ref{two} and \ref{deplog} we discuss the necessary background definitions.
In Section \ref{doubleteamsemantics} we define the double team
semantics and discuss some of its basic properties.
In Sections \ref{operators} and
\ref{minorquantifiers} we introduce and investigate generalized atoms
and minor quantifiers. In Section \ref{gamesemantics}
we define a game-theoretic counterpart
for the double team semantics. We also show that the two systems of
semantics are equivalent.
In Section \ref{complexity} we investigate the logic $\mathrm{DC}^2$.
In particular, we prove $\mathrm{NEXPTIME}$-completeness of
the satisfiability and finite satisfiability problems of the logic.
In Section \ref{singleteam} we briefly discuss a single team
semantics for generalized quantifiers.
\section{Preliminaries}\label{two}
Let $\mathbb{Z}_+$ denote the set of positive integers,
and let $\mathrm{VAR} = \{\, v_i\ |\ i\in\mathbb{Z}_+\ \}$ be
the set of  exactly all first-order \emph{variable symbols}.
We shall mainly use metavariables $x,y,z,x_1,x_2$, etc.,
in order to refer to variable symbols in $\mathrm{VAR}$.
We let $\overline{x},\overline{y},\overline{z},\overline{x}_1,\overline{x}_2$, etc.,
denote finite nonempty tuples of variable symbols, i.e., 
tuples in $\mathrm{VAR}^n$ for some $n\in\mathbb{Z}_+$.
Let $X\subseteq\mathrm{VAR}$ be a \emph{finite}, possibly empty set.
Let $\mathfrak{A}$ be a model with the domain $A$.
We do not allow for models to have an empty domain,
so $A\not=\emptyset$.
A function $f:X\rightarrow A$ is called an
\emph{assignment} for the model $\mathfrak{A}$.
Let $\overline{a}$ be any finite nonempty tuple.
We let $\overline{a}(k)$ denote the $k$-th member of the tuple:
for example $(a,b)(1) = a$ and $(a,b)(2) = b$.
When we write $u\in\overline{a}$, we mean that $u$ is a member of the tuple $\overline{a}$, i.e.,
if $\overline{a} = (a_1,...,a_n)$, then $u\in\overline{a}$ iff $u\in\{a_1,...,a_n\}$.
If $f$ is a function mapping into some set $S^k$ of tuples of the length $k\in\mathbb{Z}_+$,
then $f_i$ denotes the function with the same domain as $f$ defined such that
$$f_i(x)\ =\ \bigl(f(x)\bigr)(i),$$
i.e., $f_i$ is the \emph{$i$-th coordinate function} of $f$.
%
%

%
Let $s$ be an assignment with the domain $X$ and for the model $\mathfrak{A}$.
Let $n\in\mathbb{Z}_+$. Let $\overline{x}\in\mathrm{VAR}^n$ be a finite nonempty tuple of variables,
and let $\overline{a}\in A^n$.
Assume that if $\overline{x}$ repeats a variable, then $\overline{a}$ repeats the
corresponding value, i.e.,
if $\overline{x}(i)=  \overline{x}(j)$ for some $i,j\in\{1,...,n\}$,
then $\overline{a}(i)=  \overline{a}(j)$. We say
that $\overline{a}$ \emph{respects $\overline{x}$-repetitions}.
We let $s[\overline{x}/ \overline{a}]$ denote the variable assignment for $\mathfrak{A}$
with the domain $X\cup\{\ x\ |\ x\in\overline{x}\ \}$ defined as follows.
\begin{enumerate}
\item
$s[\overline{x}/ \overline{a}](y)\ =\ \overline{a}(k)$\ \ \ if\ \ \ $y = \overline{x}(k)$,
\item
$s[\overline{x}/ \overline{a}](y)\ =\ s(y)$\ \ \ if\ \ \ $y \not\in \overline{x}$.
\end{enumerate}
Let $T\in\mathcal{P}(A^n)$, where $\mathcal{P}$
denotes the power set operator.
Assume that each tuple in $T$ respects $\overline{x}$-repetitions.
We define 
$$s[\, \overline{x}/ T\, ]\ =\ \{\ s[\, \overline{x}/ \overline{a}\, ]\ |\ \overline{a}\in T\ \}.$$
Note that $s[\, \overline{x}/\emptyset\, ] = \emptyset$.
Let $S$ be a set and 
$\overline{z}$ a tuple of variables of the length $k\in\mathbb{Z}_+$.
If $T\subseteq S^k$ is a relation such each $\overline{u}\in T$
respects $\overline{z}$-repetitions, then we 
say that the relation $T$ \emph{respects  $\overline{z}$-repetitions.}
Let $X\subseteq\mathrm{VAR}$ be a finite, possibly empty set of
first-order variable symbols. Let $U$ be a set of assignments
$f:X\rightarrow A$. Such a set $U$ is a \emph{team}
with the \emph{domain} $X$ and for the
model $\mathfrak{A}$. The
domain $A$ of the model $\mathfrak{A}$ is a
\emph{codomain} of the team $U$.
Note that the empty set is a team for $\mathfrak{A}$, as is the set $\{\emptyset\}$
containing only the empty variable assignment.
The team $\emptyset$ does not have a unique
domain; any finite subset of $\mathrm{VAR}$ is a
domain of $\emptyset$.
The domain of the team $\{\emptyset\}$ is $\emptyset$.
A pair of teams $(U,V)$ is a \emph{double team} if $U$ and $V$ are teams
with the same domain;
the pairs $(U,\emptyset)$, $(\emptyset,V)$ are 
double teams when $U$ and $V$ are teams.
Let $V$ be a nonempty team with the domain $X$ and for the model $\mathfrak{A}$.
Let $n\in\mathbb{Z}_+$, and let $S\subseteq A^n$.
Let $f:V\rightarrow \mathcal{P}(S)$ be a function, where
$\mathcal{P}$ denotes the power set operator.
Let $\overline{x} = (x_1,...,x_n)$ be a tuple of variables.
Assume that for each $s\in V$, the relation $f(s)$ respects
$\overline{x}$-repetitions. Then we say that
\emph{$f$ respects $\overline{x}$-repetitions}.
We define
$$V[\, \overline{x}/f\, ]=\bigcup\limits_{s\, \in\, V}s[\, \overline{x}/f(s)\, ].$$
(Note that if we have $V=\emptyset$, then $V[\overline{x}/ f] = \emptyset$.)
Let $B$ denote the set $\{\ \overline{a}\in A^n\ |\ \overline{a}\text{ respects }
\overline{x}\text{-repetitions}\ \}.$
We let $f':V\rightarrow\mathcal{P}(B)$ denote
the function defined such that $f'(s) = B\setminus f(s)$
for all $s\in V$. Naturally
$$V[\, \overline{x}/f'\, ]=\bigcup\limits_{s\, \in\, V}s[\, \overline{x}/f'(s)\, ].$$
Let $V$ be a team with the domain $X$
and for the model $\mathfrak{A}$. Let $k\in\mathbb{Z}_+$.
Let $y_1,...,y_k$ be variable symbols.
Assume that $\{y_1,...,y_k\} \subseteq X$. Define 
$$\mathrm{Rel}\bigl(V,\mathfrak{A},(y_1,...,y_k)\bigr)\ =\ \{\ \bigl(s(y_1),...,s(y_k)\bigr)\ |\ s\in V\ \}.$$
%
%
%
%
If $V$ is empty, then the obtained
relation is the empty relation. Occasionally, when the model $\mA$ is
clear from the context, we simply write $\mathrm{Rel}\bigl(V,(y_1,...,y_k)\bigr)$
instead of $\mathrm{Rel}\bigl(V,\mathfrak{A},(y_1,...,y_k)\bigr)$.
Let $(i_1,...,i_n)$ be a non-empty sequence of positive integers.
A \emph{generalized quantifier} (cf. \cite{lindstrom}) of the type $(i_1,...,i_n)$ is a class $\mathcal{C}$ of structures
$(A,B_1,...,B_n)$ such that the following conditions hold.
\begin{enumerate}
\item
$A\not=\emptyset$.
\item
For each $j\in\{1,...,n\}$, we have $B_j \subseteq A^{i_j}$.
\item
If $(A',B_1',...,B_n')\in\mathcal{C}$ and if there is an isomorphism $f:A'\rightarrow A''$
from $(A',B_1',...,B_n')$ to another structure $(A'',B_1'',...,B_n'')$,
then we have $(A'',B_1'',...,B_n'')\in\mathcal{C}$.
\end{enumerate} 
Let $Q$ be a generalized quantifier of the type $(i_1,...,i_n)$.
We let $\overline{Q}$ denote the generalized quantifier of the type $(i_1,...,i_n)$
defined such that 
$$\overline{Q}\ =\ \{\ (A,C_1,...,C_n)\ |\ (A,C_1,...,C_n)\not\in Q\ \}.$$
Let $\mathfrak{A}$ be a model 
with the domain $A$.
We define $Q^{\mathfrak{A}}$ to be the set
$$\{\ (B_1,...,B_n)\ |\ (A,B_1,...,B_n)\in Q\ \}.$$
Similarly, we define 
$$\overline{Q}^{\mathfrak{A}}\ =
\ \{\ (B_1,...,B_n)\ |\ (A,B_1,...,B_n)
\in\overline{Q}\ \}.$$
If $\varphi$ is a formula of first-order logic,
possibly extended with generalized quantifiers,
we write $\mA,s\models_{\mathrm{FO}}\varphi$ when
the model $\mA$ satisfies $\varphi$  under the assignment $s$.
The related semantic clause for generalized quantifiers is as follows.
Let $Q$ denote a generalized quantifier of the type $(i_1,...,i_n)$.
Consider expressions of the type $Q\overline{x}_1,...,\overline{x}_n(\varphi_1,...,\varphi_n)$,
where $\overline{x}_j$ is a tuple of variables of the length $i_j$,
and $\varphi_j$ is a formula of first-order logic, possibly extended with generalized
quantifiers. Let $\mA$ be a model with domain $A$ and $s$ an assignment with codomain $A$.
If $\overline{x}$ is a tuple of variables of the length $k\in\mathbb{Z}_+$, we let
$A^k[\overline{x}]$ denote the set of exactly all tuples in $A^k$ that
respect $\overline{x}$-repetitions.
We define $\mA,s\models_{\mathrm{FO}} Q\overline{x}_1,...,\overline{x}_n(\varphi_1,...,\varphi_n)$ iff
$\bigl(\, A, S_1,...,S_n\, \bigr)\, \in\, Q^{\mA},$
where
$S_j\ =\ \{\ \overline{a}\in A^{i_j}[\overline{x}_j]\ |\ \mA,s[\overline{x}_j/
\overline{a}]\models_{\mathrm{FO}}\varphi_j\ \}.$
%
%
%
%
The quantifier $Q\overline{x}_1,...,\overline{x}_n$ binds the 
variables $\overline{x}_j$ in the formula $\varphi_j$.
We of course assume that $s$ interprets all the free variables in 
 the formula $Q\overline{x}_1,...,\overline{x}_n(\varphi_1,...,\varphi_n)$,
and that $\mA$ interprets the non-logical symbols that
appear in the formulae $\varphi_1,...,\varphi_n$.
Below, once we have defined the notion of a minor quantifier,
we occasionally call generalized quantifiers
\emph{ordinary generalized quantifiers}.
In the investigations below, each instance of a subformula of a formula $\varphi$
is considered to be a distinct subformula: for example, in the formula $(P(x)\vee P(x))$,
the left and right instances of the formula $P(x)$ are considered to be two \emph{distinct} subformulae
of the formula $(P(x)\vee P(x))$. It is not important how this distinction is achieved formally.
We let $\mathrm{SUB}_{\varphi}$ denote the set of subformulae of
$\varphi$. For example the set $\mathrm{SUB}_{(P(x)\vee P(x))}$
has three subformulae in it, the formula $(P(x)\vee P(x))$ and both
instances of $P(x)$.
We consider only models with a purely relational vocabulary,
without function symbols or constant symbols.
When we informally leave brackets unwritten in formulae,
the order of priority of binary connectives is such that
$\wedge$ is first, and then come
$\vee$, $\rightarrow$ and $\leftrightarrow$, in
the given order.
The notation $\mA,[x\mapsto a,y\mapsto b]\models\varphi$
means that $\mA,s\models\varphi$ when $s$ is an assignment
whose domain is $\{x,y\}$,
and it holds that $s(x) = a$ and $s(y) = b$.
\section{Dependence logic and its variants}\label{deplog}
%

%
%
%
%
%
Let $\tau$ be a vocabulary containing
relation symbols only.
Let $\mathcal{A}(\tau)$ be the 
smallest set $T$ such that the following conditions
are satisfied.
\begin{enumerate}
\item
Let $x_1$ and $x_2$ be (not necessarily distinct) variable symbols.
Then $x_1 = x_2\ \in\ T$.
\item
Let $k$ be a positive integer.
If $R\in\tau$ is a $k$-ary relation symbol
and $x_1,...,x_k$ are (not necessarily distinct) variable symbols,
then $R(x_1,...,x_k)\in T$.
\item
Let $k$ be a positive integer.
If $x_1,...,x_k$ are (not necessarily distinct) variable symbols,
then $\depst(x_1,...,x_k)\in T$.
\end{enumerate}
Formulae formed by the rules $1$ and $2$ above are 
called \emph{first-order atoms}.
The set of $\tau$-formulae of
\emph{dependence logic} $\mathrm{D}$ is
the smallest set $T$ such that the following conditions hold.
\begin{enumerate}
\item
$\mathcal{A}(\tau)\subseteq T$.
\item
If $\varphi\in \mathcal{A}(\tau)$, then $\neg\varphi\in T$.
\item
If $\varphi,\psi\in T$, then $(\varphi\vee\psi)\in T$.
\item
If $\varphi,\psi\in T$, then $(\varphi\wedge\psi)\in T$.
\item
If $\varphi\in T$ and $z\in\mathrm{VAR}$, then $\exists z\, \varphi\in T$.
\item
If $\varphi\in T$ and $z\in\mathrm{VAR}$, then $\forall z\, \varphi\in T$.
\end{enumerate}
\emph{Two-variable dependence logic} $\mathrm{D}^2$ is a fragment of $\mathrm{D}$.
Let $\sigma$ be a vocabulary containing relation symbols
only. Assume each symbol in $\sigma$ is either of the arity $1$ or $2$.
Fix two distinct variable symbols $x$ and $y$.
The set $\mathcal{A}(\sigma)$ of atomic $\sigma$-formulae of $\mathrm{D}^2$ is the
smallest set $T$ defined as follows.
\begin{enumerate}
\item
Assume $P\in\sigma$ and $R\in\sigma$ are unary and binary relation symbols, respectively.
Let $z,z'\in\{x,y\}$ be (not necessarily distinct) variables. Then $P(z)\in T$ and $R(z,z')\in T$.
\item
Let $z,z'\in\{x,y\}$ be (not necessarily distinct) variables.
Then we have $\depst(z)\in T$ and $\depst(z,z')\in T$.
Also $z = z'\in T$.
\end{enumerate}
The set of $\sigma$-formulae of $D^2$ is the smallest
set $T$ satisfying the following conditions.
\begin{enumerate}
\item
$\mathcal{A}(\sigma)\subseteq T$.
\item
If $\varphi\in \mathcal{A}(\sigma)$, then $\neg\varphi\in T$.
\item
If $\varphi,\psi\in T$, then $(\varphi\vee\psi)\in T$.
\item
If $\varphi,\psi\in T$, then $(\varphi\wedge\psi)\in T$.
\item
If $\varphi\in T$ and $z\in\{x,y\}$, then $\exists z\, \varphi\in T$.
\item
If $\varphi\in T$ and $z\in\{x,y\}$, then $\forall z\, \varphi\in T$.
\end{enumerate}
We next define the sematics of $\mathrm{D}$.
In the definition, $\mA$ denotes a model and $U$ a team. The domain of
the team $U$ is always assumed to contain the free variables in the formulae,
and the codomain of $U$ is of course assumed to be the domain $A$ of 
the model $\mA$. Furthermore, it is assumed that the vocabulary of the 
model $\mA$ contains the non-logical symbols in the formulae.
The following clauses define the semantics of $\mathrm{D}$.
\[
\begin{array}{lll}
 \mathfrak{A}\models_U x_1=x_2 & \ \Leftrightarrow \ & 
          \forall s\in U\bigl(\mathfrak{A},s\models_{\mathrm{FO}} x_1=x_2\bigr).\\ 
 \mathfrak{A}\models_U R(x_1,...,x_m)& \ \Leftrightarrow \ & 
          \forall s\in U\bigl(\mathfrak{A},s\models_{\mathrm{FO}} R(x_1,...,x_m)\bigr).\\
 \mathfrak{A}\models_U\ \depst(x_1,...,x_m)& \ \Leftrightarrow \ & 
          \text{if  there exist assignments }s,t\in U\text{ such that}\\
                 & &\text{}s(x_i) = t(x_i)\text{ for all }i\in\{1,...,m\}\setminus\{m\},\\
                 & &\text{then we have }s(x_m) = t(x_m).\\
 \mathfrak{A}\models_U \neg\, x_1=x_2 & \ \Leftrightarrow \ & 
          \forall s\in U\bigl(\mathfrak{A},s\not\models_{\mathrm{FO}} x_1=x_2\bigr).\\
 \mathfrak{A}\models_U \neg\, R(x_1,...,x_m)& \ \Leftrightarrow \ & 
          \forall s\in U\bigl(\mathfrak{A},s\not\models_{\mathrm{FO}} R(x_1,...,x_m)\bigr).\\
 \mathfrak{A}\models_U \neg \depst (y_1,...,y_m)& \ \Leftrightarrow \ & 
          U = \emptyset.\\
   \mathfrak{A}\models_U (\varphi\vee\psi) & \ \Leftrightarrow \ &
           \mathfrak{A}\models_{U_1}\varphi\text{ and }
          \mathfrak{A}\models_{U_2}\psi\text{ for some}\\
         & &U_1,U_2\subseteq U\text{ such that }
          U_1\cup U_2 = U.\\
   \mathfrak{A}\models_U (\varphi\wedge\psi) & \ \Leftrightarrow \ &
           \mathfrak{A}\models_{U_1}\varphi\text{ and }
          \mathfrak{A}\models_{U_2}\psi.\\
   \mathfrak{A}\models_U \exists z\, \varphi& \ \Leftrightarrow \ &
           \mathfrak{A}\models_{U[z/f]}\varphi\text{ for some }f:U\rightarrow A.\\
   \mathfrak{A}\models_U \forall z\, \varphi& \ \Leftrightarrow \ &
           \mathfrak{A}\models_{U[z/A]}\varphi.
\end{array}
\]
Notice that $\mA\models_{U}\, \depst(z)$ iff either $U= \emptyset$ or
$s(z) = s'(z)$ for all $s,s'\in U$. Formulae of $\mathrm{D}$ that do not contain
instances of atoms $\depst(x_1,...,x_k)$
are called \emph{first-order formulae}. It is well-known and easy to show that
for first-order formulae, $\mA\models_{U}\varphi$ iff
we have $\mA,s\models_{\mathrm{FO}}\varphi$ for all $s\in U$.
Variants of dependence logic studied in the current literature include
%
%
for example \emph{inclusion logic} \cite{galliani}.
The syntax of inclusion logic is the same as that of dependence logic, with
the exception that instead\,  of atomic expressions $\depst(x_1,...,x_m)$,
the non-first-order atoms in inclusion logic are \emph{inclusion atoms}
%
%
%
$(y_1,...,y_k)\, \subseteq\, (z_1,...,z_k)$, and negated inclusion atoms 
are not allowed.
Inclusion atoms are interpreted such that 
$\mA\models_{U} (y_1,...,y_k)\, \subseteq\, (z_1,...,z_k)$ iff
$\mathit{Rel}\bigl(U,(y_1,...,y_k)\bigr) \subseteq \mathit{Rel}\bigr(U,(z_1,...,z_k)\bigr)$.
The existential quantifier is interpreted such that
$\mA\models_{U}\exists z\, \varphi$ iff $\mA\models_{U[x/S]}\varphi$ for
some non-empty set $S\subseteq\mathit{Dom}(\mA)$.
Other semantic clauses are exactly the same as
the ones given for dependence logic above.
This results in the interpretation of inclusion logic with \emph{lax semantics}.
Inclusion logic can also be interpreted using \emph{strict semantics}. The difference with
lax semantics is the interpretation of the existential quantifier and disjunction. 
For the existential quantifier, the semantic clause is exactly the
same as that given for dependence logic above.
For the disjunction, the semantic clause
dictates that $\mA\models_{U}\varphi\vee\psi$ iff 
we have $\mA\models_{U_1} \varphi$ and $\mA\models_{U_2}\psi$
for some teams $U_1,U_2\subseteq U$ such that $U_1\cup U_2 = U$ and
$U_1\cap U_2 = \emptyset$.
%
%
%
%
%

%
It is established in \cite{gallianihella} that
with lax semantics, inclusion logic
is equiexpressive with the \emph{positive greatest fixed point logic},
and therefore captures $\mathrm{PTIME}$
in restriction to ordered finite models.
With strict semantics,
inclusion logic captures $\mathrm{NP}$,
as observed in \cite{hannula}.
Also \emph{independence logic} \cite{indep} is a widely studied variant
of dependence logic. For formal details related to independence logic,
see \cite{indep}.
\section{A double team semantics}\label{doubleteamsemantics}
In ordinary team semantics, the \emph{background intuition}\footnote{Intuition only!} concerning
the satisfaction of formulae is that a team
satisfies a formula $\varphi$ iff every member of the team satisfies $\varphi$.
In the double team semantics, the background intuition is that a double team $(U,V)$ satisfies a
formula iff every assignment in the team $U$ satisfies the formula,
and furthermore, every assignment in the team $V$ \emph{falsifies} the formula.
Both in ordinary and double team semantics, the intuition is
actually even formally valid when the investigated
formula is a first-order formula.
The truth definition for first-order atoms and connectives is as follows.
\[
\begin{array}{lll}
 \mathfrak{A},(U,V)\models y_1=y_2 & \ \Leftrightarrow \ & 
          \forall s\in U\bigl(\mathfrak{A},s\models_{\mathrm{FO}} y_1=y_2\bigr)\text{ and }\\
          & & \forall s\in V\bigl(\mathfrak{A},s\not\models_{\mathrm{FO}} y_1=y_2\bigr).\\ 
 \mathfrak{A},(U,V)\models R(y_1,...,y_m)& \ \Leftrightarrow \ & 
          \forall s\in U\bigl(\mathfrak{A},s\models_{\mathrm{FO}} R(y_1,...,y_m)\bigr)\text{ and }\\
          & & \forall s\in V\bigl(\mathfrak{A},s\not\models_{\mathrm{FO}} R(y_1,...,y_m)\bigr).\\ 
 \mathfrak{A},(U,V)\models \neg\varphi & \ \Leftrightarrow \ & 
 \mathfrak{A},(V,U)\models \varphi.\\ 

   \mathfrak{A},(U,V)\models(\varphi\vee\psi) & \ \Leftrightarrow \ &
%
%
%
           \mathfrak{A},(U_1,V)\models\varphi\text{ and }
          \mathfrak{A},(U_2,V)\models\psi\text{ for}\\
         & &\text{some }U_1,U_2\subseteq U\text{ such that }
          U_1\cup U_2 = U.
\end{array}
\]
The background intuition concerning the
satisfaction of a quantified formula $Qx\, \varphi(x)$
is based on the idea that the set of witnesses of
$Qx\, \varphi(x)$ is the set of \emph{exactly all} values $b$
such that $\varphi(b)$ holds. A proper subset will not do.
This intuition easily generalizes to concern generalized
quantfiers $Q$ of arbitrary types.
For a generalized quantifier $Q$ of the type $(i_1,...,i_n)$, we define
$$\mathfrak{A},(U,V)\models Q\overline{x}_1,...,\overline{x}_n(\varphi_1,...,\varphi_n)$$
if and only if there exist functions $f:U\rightarrow  Q^{\mathfrak{A}}$ and
$g:V\rightarrow \overline{Q}^{\mathfrak{A}}$ such that
\[
\begin{array}{c}
 \mathfrak{A},\bigl(\, U[\, \overline{x}_1/f_1]\cup V[\, \overline{x}_1/{g_1}\, ],
\, U[\, \overline{x}_1/{f_1}'\, ]\cup V[\, \overline{x}_1/{g_1}'\,  ]\, \bigr)\models\varphi_1,\\
 \vdots\\
 \mathfrak{A},\bigl(\, U[\, \overline{x}_n/f_n]\cup V[\, \overline{x}_n/{g_n}\, ],
\, U[\, \overline{x}_n/{f_n}'\, ]\cup V[\, \overline{x}_n/{g_n}'\, ]\, \bigr)\models\varphi_n.\\
\end{array}
\]
The functions $f$ and $g$ must have the property that for each $i\in\{1,...,n\}$,
the coordinate functions $f_i$ and $g_i$ (and thereby also the functions $f_i'$
and $g_i'$) respect $\overline{x}_i$-repetitions. 
Notice that if $W = \emptyset$ is the empty team
and $f:W\rightarrow Q^{\mA}$ the empty function ($f = \emptyset$),
then $W[\overline{x}/f] = \emptyset$.
\begin{proposition}\label{first-ordercorrespondence}
Let $\varphi$ be a formula of first-order logic, possibly extended with generalized
quantifiers. Let $(U,V)$ be a double team. Then
$$\mathfrak{A},(U,V)\models\varphi\
\text{ iff }\
\forall s\in U\forall t\in V\bigl(\mathfrak{A},s\models_{\mathrm{FO}}
\varphi\text{ and }\mathfrak{A},t\not\models_{\mathrm{FO}}\varphi\bigr).$$
\end{proposition}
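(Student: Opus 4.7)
The plan is to prove the proposition by induction on the structure of $\varphi$. The atomic cases (equality and relation atoms) are immediate, since the double-team semantics for atoms was defined to be precisely the pointwise condition stated in the proposition. The negation case is also quick: by the clause for $\neg$, we have $\mathfrak{A},(U,V)\models\neg\varphi$ iff $\mathfrak{A},(V,U)\models\varphi$, and applying the induction hypothesis together with the standard FO clause for negation delivers the result.

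For disjunction, the forward direction uses the splitting function $f:U\to\mathbb{V}$ to classify each $s\in U$ according to which of $\varphi,\psi$ it satisfies (applying the induction hypothesis to the two subteams), while observing that every $t\in V$ lies in both $V\cup U[f_1']$ and $V\cup U[f_2']$ and so must falsify both disjuncts. The backward direction constructs such an $f$ explicitly: for each $s\in U$, which by assumption satisfies $\varphi\vee\psi$ in FO, set $f(s)$ equal to $(\top,\top)$, $(\top,\bot)$, or $(\bot,\top)$ according to which disjunct(s) $s$ makes true. Then $U[f_1]$ is exactly the set of $s\in U$ satisfying $\varphi$ and $U[f_1']$ the set satisfying $\psi$ but not $\varphi$, so that $V\cup U[f_1']$ consists entirely of FO-falsifiers of $\varphi$; the induction hypothesis then yields the two required double-team satisfactions.

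The generalized quantifier case is the main obstacle, chiefly in bookkeeping. For the forward direction, assume witness functions $f:U\to Q^{\mathfrak{A}}$ and $g:V\to\overline{Q}^{\mathfrak{A}}$ are given and fix $s\in U$. The key point is that, by the induction hypothesis applied to the $i$-th double team, the set $\{\,\overline{a}\mid s[\overline{x}_i/\overline{a}]\models_{\mathrm{FO}}\varphi_i\,\}$ coincides exactly with $f_i(s)$, the $i$-th component of $f(s)\in Q^{\mathfrak{A}}$: membership follows from $U[\overline{x}_i/f_i]$ containing only FO-satisfiers of $\varphi_i$, and the reverse inclusion from $U[\overline{x}_i/f_i']$ containing only FO-falsifiers. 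Hence the tuple of FO-definable relations at $s$ lies in $Q^{\mathfrak{A}}$, which is precisely the FO satisfaction condition for the quantifier. The argument for $t\in V$ is symmetric and uses $\overline{Q}^{\mathfrak{A}}$.

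For the backward direction of the quantifier case, one defines $f(s)=(B_1^s,\ldots,B_n^s)$ with $B_i^s=\{\,\overline{a}\mid s[\overline{x}_i/\overline{a}]\models_{\mathrm{FO}}\varphi_i\,\}$, restricted to tuples respecting the repetitions in $\overline{x}_i$, and defines $g$ analogously on $V$. The hypothesis that every $s\in U$ satisfies the quantified formula gives $f(s)\in Q^{\mathfrak{A}}$, and that every $t\in V$ falsifies it gives $g(t)\in\overline{Q}^{\mathfrak{A}}$. By construction, $U[\overline{x}_i/f_i]\cup V[\overline{x}_i/g_i]$ contains only FO-satisfiers of $\varphi_i$ and the complementary team only FO-falsifiers, so the induction hypothesis closes the case. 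The technical care throughout is to respect the convention that $f_i'(s)$ denotes the repetition-respecting complement of $f_i(s)$ in $A^{i_i}$, which is precisely what is needed to align the two semantics on tuples with repeated variables.
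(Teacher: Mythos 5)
Your proof is correct and follows exactly the route the paper intends: the paper's own proof is just the one-line remark that the claim follows by a straightforward induction on formula structure, and your argument is a correct, careful elaboration of that induction, including the right handling of the disjunction splitting function and the identification of $f_i(s)$ with the exact witness set in the quantifier case. No discrepancies to report.
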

\begin{proof}
The claim is established by a straightforward induction on the structure of formulae.
\end{proof}
When $\varphi$ is a sentence, we define $\mathfrak{A}\models\varphi$
iff $\mathfrak{A},(\{\emptyset\},\emptyset)\models\varphi$.
When $\mathfrak{A}$ is known from the context, we may write $(U,V)\models\psi$ instead
of $\mathfrak{A},(U,V)\models\psi$.
%
%
%
%
%

%
%
%
%
%

%
Note that the truth definition of disjunction could be
easily modified without sacrificing 
Proposition \ref{first-ordercorrespondence}.
For example we could define $\mA,(U,V)\models\varphi\vee\psi$ iff
$\mA,(U_1,V\cup\, U_1')\models \varphi$ and $\mA,(U_2,V\cup\, U_2')\models\psi$
for some $U_1,U_2\subseteq U$ such that $U_1\cup U_2 = U$;
here $U_1' = U\setminus U_1$ and $U_2' = U\setminus U_2$.
This definition would perhaps be a better match with 
our truth definition concerning generalized quantifiers.
For the sake of simplicity, we shall mostly ignore such
alternative definitions for connectives in this article.
However, let us define the connective $\vee^s$ such
that $\mA,(U,V)\models \varphi\vee^s\psi$ iff
$\mA,(U_1,V)\models\varphi$ and $\mA,(U_2,V)\models\psi$
for some $U_1,U_2\subseteq U$ such that $U_1\cup U_2 = U$
and $U_1\cap U_2 = \emptyset$.
\section{Generalized atoms}\label{operators}
Let $m$ and $n$ be non-negative integers such that $n+m > 0$.
Let $Q$ be a generalized quantifier of the type $(i_1,...,i_{n+m})$.
Consider atomic expressions of the type
$$A_{Q,n}(\overline{y}_1,...,\overline{y}_n\, ;\, \overline{y}_{n+1},...,\overline{y}_{n+m}),$$
where each $\overline{y}_j$ is a tuple of variables of the length $i_j$, and $A_{Q,n}$
is simply a symbol. 
Extend the double team semantics such that
$$\mathfrak{A},(U,V)\models A_{Q,n}(\overline{y}_1,...,
\overline{y}_n\, ;
\, \overline{y}_{n+1},...,\overline{y}_{n+m})$$
if and only if
$$\bigl(\mathrm{Rel}(U,\mathfrak{A},\overline{y}_1),...,
\mathrm{Rel}(U,\mathfrak{A},\overline{y}_n),\mathrm{Rel}(V,\mathfrak{A},\overline{y}_{n+1}),...,
\mathrm{Rel}(V,\mathfrak{A},\overline{y}_{n+m})\bigr)\in Q^{\mathfrak{A}}.$$
The generalized quantifier $Q$ and the number $n$ define a  
\emph{generalized atom}
%
%
of the type
$$\bigl((i_1,...,i_n),(i_{n+1},...,i_{n+m})\bigr).$$
Note that types of generalized quantifiers are tuples and
types of generalized atoms
are pairs of tuples; exactly one tuple of such a pair
of tuples can be the empty tuple. (We do not
bother ourselves with generalized atoms
of the type $(\emptyset;\emptyset)$ or
generalized quantifiers of the type $\emptyset$.)
We occasionally call generalized atoms
\emph{non-first-order atoms}, while other atoms are
first-order atoms.
\section{Minor quantifiers}\label{minorquantifiers}
In this section we generalize the notion of a generalized quantifier
by Lindstr\"{om} in \cite{lindstrom}.
This way we obtain a framework that can naturally accommodate
in a single umbrella framework the different kinds of semantics for the existential quantifier in
the literature on dependence logic and its later variants.
Minor quantifiers have a natural intuitive interpretation. The interpretation will be discussed in 
Section \ref{gamesemantics}, where we define the game-theoretic counterpart for the double
team semantics.
Let $Q$ be a generalized quantifier of the type $(1)$.
Let $\mathcal{C}$ be a class of
structures $(A,B_+,B_-)$ such that the following conditions hold.
\begin{enumerate}
\item
$A\not=\emptyset$.
\item
$B_+\subseteq A$ and $B_-\subseteq A$.
\item
$B_+\cap B_- = \emptyset$.
\item
If $(C,D_+,D_-)\in\mathcal{C}$ and if there is an isomorphism $f:C\rightarrow E$
from $(C,D_+,D_-)$ to another structure $(E,F_+,F_-)$,
then $(E,F_+,F_-)\in\mathcal{C}$.
\item
For each $(A,B_+,B_-)\in\mathcal{C}$, there exists a
pair $(A,H)\in Q$ such that $B_+\subseteq H$ and $B_-\subseteq A\setminus H$.
\item
If $(A,B_+,B_-)\in\mathcal{C}$, there does \emph{not} exist a
pair $(A,H)\in \overline{Q}$ such that $B_+\subseteq H$ and $B_-\subseteq A\setminus H$.
\item
For each $(A,H)\in Q$, there exists a
tuple $(A,B_-,B_-)\in \mathcal{C}$ such that $B_+\subseteq H$ and $B_-\subseteq A\setminus H$.
\end{enumerate} 
%
%
%
%
We say that \emph{$\mathcal{C}$ witnesses $Q$}.
%
%

%
Consider a pair $(\mathcal{C},\mathcal{D})$ such that 
$\mathcal{C}$ witnesses $Q$ and $\mathcal{D}$ witnesses $\overline{Q}$.
Here $Q$ is a quantifier 
of the type $(1)$.
The pair $(\mathcal{C},\mathcal{D})$
defines a \emph{minor quantifier} of the type $(1)$.
(For the sake of simplicity, we shall not 
define minor quantifiers of any other type.)
Let $M = (\mathcal{C},\mathcal{D})$.
We call $M$ a \emph{minor of} $Q$.
We write $M\leq Q$.
A possible \emph{intuitive} interpretation concerning
the relationship between $Q$ and
the minor quantifier $M$ is that in order to verify $Qx\, \varphi(x)$
in a model $\mA$, one does not necessarily
have to be able to find the set
$B\in Q^{\mA}$ such
that $b\in B$ iff $\varphi(b)$ holds in $\mA$.
Dependening on the quantifier $Q$, it
may be enough to find some smaller
set $B_+\subseteq B$ of values that
verify $\varphi(x)$, possibly 
together with a set $B_-\subseteq\mathit{Dom}(\mA)\setminus B$
of falsifying values. A tuple $(A,B_+,B_-)\in\mathcal{C}$ then
provides the sets $B_+$ and $B_-$.
On the other hand, to \emph{falsify} a formula $Qx\, \psi(x)$,
it suffices to find a tuple $(A,E_+,E_-)$ in $\mathcal{D}$,
where $E_+$ is a set of verifying and $E_-$ a set
of falsifying values for $\psi(x)$.
Therefore minor quantifiers provide a generalized perspective on
generalized quantifiers.
The perspective in some intuitive sense deals with
issues concerning the
\emph{constructive verification and falsification} of formulae.
%

%
%

%
The semantics of minor quantifiers will be highly analogous
to that of ordinary generalized quantifiers.
To make this issue explicit, let us fix
some notational conventions.
Let $M = (\mathcal{C},\mathcal{D})$ be a minor quantifier.
Let $\mA$ be a model with the domain $A$.
Define $M^{\mA} = \{\ (B_+,B_-)\ |\ (A,B_+,B_-)\in \mathcal{C}\ \}$
and $\overline{M}^{\mA} = \{\ (B_+,B_-)\ |\ (A,B_+,B_-)\in \mathcal{D}\ \}$.
Let $U$ be a team and $f:U\rightarrow M^{\mA}$ a function.
When discussing the semantics of minor quantifiers
$M = (\mathcal{C},\mathcal{D})$, we let
$U[ x/f ]$ denote the team $U[ x/f_1 ]$,
while $U[ x/f' ]$ denotes the team $U[ x/f_2 ]$.
Similarly, if $V$ is a team and $g:V\rightarrow \overline{M}^{\mA}$ a
function, we let $V[ x/g ]$ denote the team $U[ x/g_1 ]$
and $U[ x/g' ]$ the team $U[ x/g_2 ]$.
This convention makes the connection between ordinary generalized
quantifiers and minor quantifiers fully explicit. 
All related arguments will be carefully developed below,
so no notational confusion arises.
%
%
%
%
%
%

%
Let $M$ be a minor quantifier of the type $(1)$.
Consider expressions of the type $Mx\, \varphi$.
Extend the double team semantics such that 
$\mA,(U,V)\models Mx\, \varphi$ iff
there exists functions $f:U\rightarrow M^{\mA}$ and
$g:V\rightarrow \overline{M}^{\mA}$ such that
$$\mA,\bigl(U[\, x/f\, ]\cup V[\, x/g\, ],
\, U[\, x/f\, '\, ]\cup V[\, x/g\, '\, ]\bigr)\models \varphi.$$
Notice that the form of the above semantic clause is now the
same as in the case of ordinary quantifiers of the type $(1)$.
The following proposition is easy to establish.
\begin{proposition}\label{minorisnormal}
Let $Q$ be a generalized quantifier
and $M\leq Q$ a minor quantifier.
Let $\varphi$ be a formula of first-order logic extended with any
collection of minor quantifiers and
ordinary generalized quantifiers. Let $\varphi'$
be a formula obtained from $\varphi$ by
replacing any occurrence of $Q$ by $M$,
or alternatively, any occurrence of $M$ by $Q$.
Then
$\mathfrak{A},(U,V)\models\, \varphi\
\text{ iff }\
\mathfrak{A},(U,V)\models\, \varphi'.$
\end{proposition}
Let $Q$ be a generalized quantifier of the type $(1)$.
Notice that $Q$ canonically defines the minor quantifier
$$M_Q\ :=\ \Bigl(\{\ (A,B,A\setminus B)\ |\ (A,B)\in Q\ \},
\{\ (A,B,A\setminus B)\ |\ (A,B)\in\overline{Q}\ \}\Bigr),$$
whose semantics is equivalent to that of $Q$ in
the double team framework.
We can replace any instance of\, $Q$ by $M_Q$
(or vice versa) in any formula $\varphi$, and exactly
the same models and double teams will satisfy the
two formulae.\footnote{The formula $\varphi$ can indeed belong to any extension of
first order logic with ordinary generalized quantifiers,
minor quantifiers, and generalized atoms.}
We call $M_Q$ \emph{the minor quantifier defined by $Q$}.
Ordinary generalized quantifiers can therefore be
seen as special cases of minor quantifiers.
%
%
%

%
Define the \emph{strict existential quantifier} $\exists^s$
to be the minor quantifier $(\mathcal{C},\mathcal{D})$,
where $\mathcal{C}$ contains exactly all triples $(A,B,C)$ such
that $A$ is a nonempty set, $B\subseteq A$ is a singleton set, 
and $C = \emptyset$, while $\mathcal{D}$ contains
exactly all triples $(D,E,F)$ such that $D$ is a nonempty set,
$E = \emptyset$, and $F = D$.
Define the \emph{lax existential quantifier} $\exists^l$
to be the minor quantifier $(\mathcal{C},\mathcal{D})$,
where $\mathcal{C}$ contains exactly all triples $(A,B,C)$ such
that $A$ is a nonempty set, $B\subseteq A$ is a nonempty set, 
and $C = \emptyset$, while $\mathcal{D}$ contains
exactly all triples $(D,E,F)$ such that $D$ is a nonempty set,
$E = \emptyset$, and $F = D$.
Note that neither $\exists^s$ nor $\exists^l$ is
equal to the minor quantifier $M_{\exists}$
defined by the ordinary existential quantifier.
%

%
%
%
%
%
%
%
%
%
%
%
%
%
%
%
%
%

%
\section{Game-theoretic semantics}\label{gamesemantics}
In this section we define a natural game-theoretic semantics for first-order logic
extended with all ordinary generalized
quantifiers of type $(1)$, all minor quantifiers of type $(1)$,
and all generalized atoms.
We only deal with
quantifiers of the type $(1)$ in the
rest of the article for
the sake of simplicity.
Strictly speaking, we could of course avoid
discussing ordinary generalized quantifiers here,
%
%
%
%
%
but we shall discuss them anyway since it makes the exposition of the
background intuitions behind the game-theoretic semantics particularly transparent.
%
%
%
%
%
%

%
Let $\mathfrak{A}$ be a model with the domain $A$.
Let $s$ be an assignment that maps a finite set of first-order variable symbols into $A$.
We define a semantic game $G(\mathfrak{A},s,\#,\varphi)$, where $\#\in\{+,-\}$ is a symbol
and $\varphi$ a formula.
Here we assume that the assignment $s$ interprets all the free variables in $\varphi$.
The game is played by an \emph{agent} $\mathcal{A}$ against an \emph{interrogator} $\mathcal{I}$.
The intuition is that the interrogator poses questions, and the agent tries to
answer them. In a game $G(\mathfrak{A},s,+,\varphi)$,
the agent's task is to maintain that $\varphi$ holds,
while in a game $G(\mathfrak{A},s,-,\varphi)$,
the agent's task is to maintain that $\varphi$ does not hold.
%

%
%
A \emph{play} of the game $G(\mathfrak{A},s,\#,\varphi)$
begins from the \emph{position} $(\mathfrak{A},s,\#,\varphi)$.
All positions of the game are tuples of the form $(\mathfrak{A},t,\#,\psi)$,
where $t$ is a finite assignment for $\mathfrak{A}$, $\#\in\{+,-\}$, and $\psi$ is a
subformula of $\varphi$.
Assume that we have reached a position $(\mathfrak{A},t,\#,\neg\psi)$ in a play of the game.
The play of the game continues from the position $(\mathfrak{A},t,\overline{\#},\psi)$,
where $\overline{\#}\in\{+,-\}\setminus\{\#\}$.
Assume a position $(\mathfrak{A},t,+,\psi\vee \psi')$ has been reached.
Then the player $\mathcal{A}$ chooses exactly one
of the sets $\{\psi,\psi'\}$, $\{\psi\}$, $\{\psi'\}$.
If $\mathcal{A}$ chooses $\{\psi,\psi'\}$, then
$\mathcal{I}$ chooses a formula $\chi\in\{\psi,\psi'\} $,
and the play continues from the position
$(\mathfrak{A},t,+,\chi)$.
If $\mathcal{A}$ chooses $\{\psi\}$, then
the play of the game continues from the position
$(\mathfrak{A},t,+,\psi)$.
If $\mathcal{A}$ chooses $\{\psi'\}$, then
the play continues from the position
$(\mathfrak{A},t,+,\psi')$.\footnote{
Consider the connective $\vee^s$
defined in Section \ref{doubleteamsemantics}.
We can add this connective into the language considered.
The rules for a position $(\mathfrak{A},t,+,\psi\vee^s \psi')$
are  exactly as for $(\mathfrak{A},t,+,\psi\vee\psi')$,
but with the exception that the choice $\{\psi,\psi'\}$
by $\mathcal{A}$ is not allowed.
The rules for a position $(\mathfrak{A},t,-,\psi\vee^s\psi')$
are the same as for a position $(\mathfrak{A},t,-,\psi\vee\psi')$. 
As the reader can easily check, Theorem \ref{gamecorrespondence}
below goes through even when the language is extended by $\vee^s$.
We could consider further connectives and even define a natural notion of a
\emph{minor connective}, but we shall not do that for the sake of brevity.}
The background \emph{intuition} concerning the
disjunction rule is that 
$\mathcal{A}$ makes one of the following three claims.
\begin{enumerate}
\item
Both $\psi$ and $\psi'$ hold.
\item
At least $\psi$ holds.
\item
At least $\psi'$ holds.
\end{enumerate}
If a position $(\mathfrak{A},t,-,\psi\vee\psi')$ has been reached,
%
%
%
the player $\mathcal{I}$ chooses one of the positions $(\mathfrak{A},t,-,\psi)$
and $(\mathfrak{A},t,-,\psi')$.
The play of the game then continues from the position chosen by $\mathcal{I}$.
Assume we have reached a position $(\mathfrak{A},t,+,Qx\, \psi)$ in the game,
where $Q$ is an ordinary generalized quantifier. The
play of the game continues as follows.
\begin{enumerate}
\item
In the case $Q^{\mathfrak{A}}$ is empty, the play ends in the position $(\mathfrak{A},t,+, Qx\, \psi)$,
and we say that the player $\mathcal{A}$ \emph{does not survive} the play of the game.
Otherwise, the player $\mathcal{A}$ chooses a set $S\in Q^{\mathfrak{A}}$. The
background \emph{intuition} is that $\mathcal{A}$ claims
that $S$ is the set of \emph{exactly all} values for $x$ in $A$ that verify $\psi$. 
\item
Then the player $\mathcal{I}$ chooses either the set $S$ chosen by $\mathcal{A}$, or its complement $A\setminus S$.
\begin{enumerate}
\item
If $\mathcal{I}$ chooses $S$, then $\mathcal{I}$ also chooses an element $b\in S$, and the 
play of the game continues from the position $(\mathfrak{A},t[x/b],+,\psi)$. In this case the 
intuition is that the player $\mathcal{I}$ is opposing the claim that $b$ verifies $\psi$.
If $S=\emptyset$ and $\mathcal{I}$ chooses $S$,
the play of the game ends in the position $(\mathfrak{A},t,+, Qx\, \psi)$, and
the player $\mathcal{A}$ \emph{survives} the play of the game
\item
If $\mathcal{I}$ chooses $A\setminus S$, then $\mathcal{I}$ also chooses an element $b\in A\setminus S$.
The play of the game continues from the position $(\mathfrak{A},t[x/b],-,\psi)$.
The intuition is that 
the player $\mathcal{I}$ is opposing the claim that $b$ \emph{falsifies} $\psi$.
If $\mathcal{I}$ chooses $A\setminus S $ and 
$A\setminus S =\emptyset$,
the play of the game ends in the position $(\mathfrak{A},t,+, Qx\, \psi)$,
and the player $\mathcal{A}$ \emph{survives} the play of the game.
\end{enumerate}
\end{enumerate}
Assume we have reached a position $(\mathfrak{A},t,-,Qx\, \psi)$ in a play of the game,
where $Q$ is an ordinary generalized quantifier. The
play continues as follows.
\begin{enumerate}
\item
In the case $\overline{Q}^{\mathfrak{A}}$ is empty, the play of the game ends in the
position $(\mathfrak{A},t,-, Qx\, \psi)$,
and the player $\mathcal{A}$ does \emph{not survive} the play of the game.
Otherwise, the player $\mathcal{A}$ chooses a set $S\in \overline{Q}^{\mathfrak{A}}$. The \emph{intuition} is that
the player $\mathcal{A}$ claims
that $S$ is the set of \emph{exactly all} values for $x$
that verify $\psi$, while $S\not\in Q^{\mathfrak{A}}$.
\item
The player $\mathcal{I}$ then chooses either the set $S$ chosen by $\mathcal{A}$ or its complement $A\setminus S$.
\begin{enumerate}
\item
If $\mathcal{I}$ chooses $S$, then $\mathcal{I}$ also chooses an element $b\in S$, and the
play of the game continues from the position $(\mathfrak{A},t[x/b],+,\psi)$. In this case the 
intuition is that the player $\mathcal{I}$ is opposing the claim that $b$ verifies $\psi$.
If $\mathcal{I}$ chooses $S$ and $S=\emptyset$,
the play of the game ends ends in the position $(\mathfrak{A},t,-, Qx\, \psi)$,
and the player $\mathcal{A}$ survives the play of the game.
\item
If $\mathcal{I}$ chooses $A\setminus S$, then $\mathcal{I}$ also chooses an element $b\in A\setminus S$.
The game continues from the position $(\mathfrak{A},t[x/b],-,\psi)$.
The intuition is that 
the player $\mathcal{I}$ is opposing the claim that $b$ falsifies $\psi$.
If $\mathcal{I}$ chooses $A\setminus S$ and $A\setminus S=\emptyset$, 
the play ends in the position $(\mathfrak{A},t,-, Qx\, \psi)$, and the player
$\mathcal{A}$ survives the play of the game.
\end{enumerate}
\end{enumerate}
Assume we have reached a position $(\mathfrak{A},t,+, Mx\, \psi)$ in the game,
where $M$ is a minor quantifier. The
play of the game continues as follows.
\begin{enumerate}
\item
In the case $M^{\mathfrak{A}}$ is empty, the play ends in the position $(\mathfrak{A},t,+, Mx\, \psi)$,
and we say that the player $\mathcal{A}$ \emph{does not survive} the play of the game.
Otherwise, the player $\mathcal{A}$ chooses a pair $(S,T)\in M^{\mA}$.
The \emph{intuition} is that $S$ and $T$ are sets of values
for $x$, witnessing and falsifying $\psi$,
respectively. In other words, the player $\mathcal{A}$
claims that assignments in $t[x/S]$ satisfy $\psi$, while
assignments in $t[x/T]$ falsify $\psi$.
A further piece of the background intuition of course is that providing
such a pair $(S,T)$ is sufficient for the verification of $Mx\, \psi$.
\item
Then the player $\mathcal{I}$ chooses either the set $S$ or the set $T$.
\begin{enumerate}
\item
If $\mathcal{I}$ chooses $S$, then $\mathcal{I}$ also chooses an element $b\in S$, and the
play of the game continues from the position $(\mathfrak{A},t[x/b],+,\psi)$. In this case the 
intuition is that the player $\mathcal{I}$ is opposing the claim that $b$ verifies $\psi$.
If $S=\emptyset$ and $\mathcal{I}$ chooses $S$,
the game ends in the position $(\mathfrak{A},t,+,Mx\, \psi)$, and
the player $\mathcal{A}$ survives the play of the game.
\item
If $\mathcal{I}$ chooses $T$, then $\mathcal{I}$ also chooses an element $b\in T$.
The play of the game continues from the position $(\mathfrak{A},t[x/b],-,\psi)$.
The intuition is that 
the player $\mathcal{I}$ is opposing the claim that $b$ falsifies $\psi$.
If $T =\emptyset$ and $\mathcal{I}$ chooses $T$,
the game ends in the position $(\mathfrak{A},t,+,Mx\, \psi)$, and
the player $\mathcal{A}$ survives the play of the game.
\end{enumerate}
\end{enumerate}
Assume we have reached a position $(\mathfrak{A},t,-,Mx\, \psi)$ in a play of the game,
where $M$ is a minor quantifier. The
play continues as follows.
\begin{enumerate}
\item
In the case $\overline{M}^{\mathfrak{A}}$ is empty, the play of the game ends in the
position $(\mathfrak{A},t,-, Mx\, \psi)$,
and the player $\mathcal{A}$ does not survive the play of the game.
Otherwise, the player $\mathcal{A}$ chooses a
pair $(S,T)\in \overline{M}^{\mathfrak{A}}$. The intuition is that
that $S$ and $T$ are sets of values witnessing and falsifying $\psi$, respectively,
and supplying such a pair $(S,T)$ is enough to falsify $Mx\, \psi$.
\item
The player $\mathcal{I}$ then chooses either the set $S$ or the set $T$.
\begin{enumerate}
\item
If $\mathcal{I}$ chooses $S$, then $\mathcal{I}$ also chooses an element $b\in S$, and the
play of the game continues from the position $(\mathfrak{A},t[x/b],+,\psi)$. 
The intuition is that the player $\mathcal{I}$ is opposing the claim that $b$ verifies $\psi$.
If $\mathcal{I}$ chooses $S$ and $S = \emptyset$,
the play of the game ends in the position $(\mathfrak{A},t,-,Mx\, \psi)$,
and the player $\mathcal{A}$ survives the play of the game.
\item
If $\mathcal{I}$ chooses $T$, then $\mathcal{I}$ also chooses an element $b\in T$.
The game continues from the position $(\mathfrak{A},t[x/s],-,\psi)$.
The intuition is that 
the player $\mathcal{I}$ is opposing the claim that $b$ falsifies $\psi$.
If $\mathcal{I}$ chooses $T$ and $T = \emptyset$,
the play of the game ends in the position $(\mathfrak{A},t,-,Mx\, \psi)$,
and the player $\mathcal{A}$ survives the play of the game.
\end{enumerate}
\end{enumerate}
If $\psi$ is an atomic first-order formula,
and a position $(\mathfrak{A},t,+,\psi)$ is reached in a play of the game,
then $\mathcal{A}$ survives the play of the game if $\mathfrak{A},t\models_{\mathrm{FO}}\psi$.
If  $\mathfrak{A},t\not\models_{\mathrm{FO}}\psi$, then $\mathcal{A}$ does not
survive the play.
If a position $(\mathfrak{A},t,-,\chi)$ is reached, where $\chi$ is an atomic first-order formula,
then $\mathcal{A}$ survives the play of the game if $\mathfrak{A},t\not\models_{\mathrm{FO}}\chi$.
If $\mathfrak{A},t\models_{\mathrm{FO}}\chi$, then $\mathcal{A}$
does not survive the play.
%
%
If a position $(\mathfrak{A},t,+,\psi)$ or $(\mathfrak{A},t,-,\psi)$
is reached, where $\psi$ is a generalized atom, then $\mathcal{A}$ survives
the play. When a position with an atomic formula is reached, the 
play of the game ends.
Let $U$ and $V$ be teams with the same domain. Assume the domain
contains the free variables of $\varphi$.
A play of the game $G(\mathfrak{A},U,V,\varphi)$ is played by $\mathcal{A}$ and $\mathcal{I}$
such that $\mathcal{I}$ picks a beginning position $(\mathfrak{A},s,+,\varphi)$ or
$(\mathfrak{A},t,-,\varphi)$, where $s\in U$ and $t\in V$.
The play then proceeds according to the rules discussed above.
If $U = V = \emptyset$, and therefore $\mathcal{I}$ cannot choose a
beginning position, then $\mathcal{A}$ survives
the unique play of the game. In this case no \emph{end position} in the play
of the game is generated.
Let $F$ be a strategy of $\mathcal{A}$ for
the game $G(\mathfrak{A},U,V,\varphi)$; a strategy of $\mathcal{A}$ is
simply a function that provides a unique
choice for $\mathcal{A}$ in every possible position of the game that requires a choice.
The domain of $F$ is the set of positions in the game $G(\mathfrak{A},U,V,\varphi)$
that can be reached in some play of the game, and require a choice by $\mathcal{A}$.
In a position of the type $(\mathfrak{A},t,\#,Kx\, \psi)$, if $K^{\mA}$ is
empty, then the function $F$ is undefined on
the input $(\mathfrak{A},t,\#,Kx\, \psi)$.
Hence $F$ does not provide any move for $\mathcal{A}$ in
such a position. Here $K$ can be a minor quantifier or an ordinary generalized quantifier.

Let $S$ be the set of assignments $t$ such that some play, where $\mathcal{A}$
plays according to the strategy $F$, ends in the position $(\mathfrak{A},t,+,\chi)$.
The set $S$ is the \emph{team of positive final assignments} of the formula $\chi$ in the
game $G(\mathfrak{A},U,V,\varphi)$, when $\mathcal{A}$ plays according to $F$.
Similarly, let $T$ be the set of assignments $t$ such that some play, where $\mathcal{A}$
plays according to $F$, ends in the position $(\mathfrak{A},t,-,\chi)$.
The set $T$ is the \emph{team of negative final assignments} of the formula $\chi$ in the
game $G(\mathfrak{A},U,V,\varphi)$, when $\mathcal{A}$ plays according to $F$.
A \emph{survival strategy} of $\mathcal{A}$ in a game $G(\mathfrak{A},U,V,\varphi)$
is a strategy that guarantees, in every play of the game
where $\mathcal{A}$ follows $F$, a survival for $\mathcal{A}$.
Let $F$ be a survival strategy for $\mathcal{A}$ in $G(\mathfrak{A},U,V,\varphi)$.
Let $S(\chi)$ and $T(\chi)$ denote, respectively, the teams of positive and negative final assignments of
the generalized atom $\chi$ in the game $G(\mathfrak{A},U,V,\varphi)$, when $\mathcal{A}$ plays
according to $F$. The survival strategy $F$ is a \emph{uniform survival strategy} for $\mathcal{A}$, if for every
generalized atom $\chi$ in $\varphi$, we have $\mathfrak{A},\bigl(S(\chi),T(\chi)\bigr)\models\chi$.
Recall that all occurrences of a subformula in a formula $\varphi$
are considered to be distinct subformulae of $\varphi$.
Therefore, for example, if $\varphi$ is a generalized atom and a game
$G(\mathfrak{A},U,V,\varphi\vee\varphi)$  is played according to some strategy, the
teams of final assignments for the different instances of $\varphi$ may turn out different.
When $\mathfrak{A}$ is known from the context, we
may write $G(U,V,\psi)$ instead of $G(\mathfrak{A},U,V,\psi)$.
Also, we may write $(s,\#,\psi)$ instead of $(\mathfrak{A},s,\#,\psi)$.
\begin{theorem}\label{gamecorrespondence}
$\mathfrak{A}, (U,V)\models\varphi$ iff there exists a uniform survival
strategy for $\mathcal{A}$ in the game $G(\mathfrak{A},U,V,\varphi)$.
\end{theorem}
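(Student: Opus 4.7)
The plan is to argue by induction on the structure of $\varphi$, matching at each step the witness data appearing in the double team semantics (a splitting $f:U\to\mathbb{V}$ for disjunction, or functions $f,g$ into $Q^{\mathfrak{A}}$ and $\overline{Q}^{\mathfrak{A}}$ for a quantifier) with $\mathcal{A}$'s first move at the corresponding game position. In both base cases the game terminates immediately at every starting position. For a first-order atom this gives the semantic clause verbatim, since $\mathcal{A}$ wins at $(s,+,\chi)$ exactly when $\mathfrak{A},s\models_{\mathrm{FO}}\chi$ and at $(t,-,\chi)$ exactly when $\mathfrak{A},t\not\models_{\mathrm{FO}}\chi$. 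For a generalized atom $\chi$ the unique play merely records each starting assignment into the appropriate team of final assignments, so $S(\chi)=U$ and $T(\chi)=V$; uniformity of the (trivial) strategy is then by definition equivalent to $(U,V)\models\chi$.

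For negation, the game rule swaps $+$ and $-$, so plays of $G(U,V,\neg\psi)$ are literally plays of $G(V,U,\psi)$; combined with the clause $\mathfrak{A},(U,V)\models\neg\psi\Leftrightarrow\mathfrak{A},(V,U)\models\psi$, the induction hypothesis closes this case. For disjunction $\varphi\vee\psi$, given a uniform survival strategy $F$ for $G(U,V,\varphi\vee\psi)$, define $f(s)=F(s,+,\varphi\vee\psi)$ for $s\in U$. The three $\mathcal{I}$-branches in the positive $\vee$-rule, combined with the $(t,-,\varphi)$ and $(t,-,\psi)$ continuations from $V$, show that the positions reachable after the first $\vee$-move are precisely the starting positions of the two subgames $G(U[f_1],V\cup U[f_1'],\varphi)$ and $G(U[f_2],V\cup U[f_2'],\psi)$. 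Restricting $F$ to each yields a uniform survival strategy there, so the induction hypothesis gives both conjuncts of the semantic clause; the converse combines a witnessing $f$ with uniform survival strategies for the two subgames.

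The quantifier case is parallel. From a uniform survival strategy $F$ one extracts $f:U\to Q^{\mathfrak{A}}$ and $g:V\to\overline{Q}^{\mathfrak{A}}$ as $\mathcal{A}$'s choices at $(s,+,Qx\,\psi)$ and $(t,-,Qx\,\psi)$. A short case analysis on $\mathcal{I}$'s four branches (choose $S$ or $A\setminus S$, in either sign) shows that the collection of subsequent $\psi$-positions, grouped by sign, is exactly the double team
\[
\bigl(\,U[x/f]\cup V[x/g],\ U[x/f']\cup V[x/g']\,\bigr),
\]
matching the semantic clause; the induction hypothesis then applies. The ``does not survive'' branches corresponding to $Q^{\mathfrak{A}}=\emptyset$ (resp.\ $\overline{Q}^{\mathfrak{A}}=\emptyset$) match the non-existence of a witnessing $f$ (resp.\ $g$) whenever $U$ (resp.\ $V$) is non-empty, and when $U$ (resp.\ $V$) is empty both sides hold vacuously.

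The main obstacle is purely bookkeeping: one must verify that the teams of positive and negative final assignments at each generalized atom $\chi$ occurring inside $\varphi$, computed in the full game under $F$, agree with those computed in the appropriate subgame under the restricted strategy. This follows directly from the tree structure of the game, since every play descends into a unique subformula after each connective or quantifier move, but it has to be stated explicitly once so that the ``uniform'' qualifier propagates through the induction without ambiguity.
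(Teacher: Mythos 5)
Your proposal is correct and follows essentially the same route as the paper's own proof: an induction on $\varphi$ in which $\mathcal{A}$'s first move at a disjunction or quantifier position is identified with the witnessing data ($f:U\to\mathbb{V}$, respectively $f:U\to Q^{\mathfrak{A}}$ and $g:V\to\overline{Q}^{\mathfrak{A}}$) of the double team semantics, and the reachable continuation positions are matched with the double teams in the semantic clauses. You in fact spell out two points the paper leaves implicit --- the generalized-atom base case and the bookkeeping that the final teams of assignments in the full game agree with those in the subgames.
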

\begin{proof}
The claim is proved by induction on the structure of $\varphi$.
The case for atomic formulae is trivial.
Assume that $(U,V)\models\neg\psi$. Therefore 
$(V,U)\models\psi$. By the induction hypothesis, $\mathcal{A}$ has a
uniform survival strategy $F$ in $G(V,U,\psi)$.
The strategy $F$ provides a uniform survival strategy in $G(U,V,\neg\psi)$.
Assume that $\mathcal{A}$ has  a uniform survival strategy in $G(U,V,\neg\psi)$.
Therefore $\mathcal{A}$ has a uniform survival strategy in $G(V,U,\psi)$.
By the induction hypothesis, $(V,U)\models\psi$.
Therefore $(U,V)\models\neg\psi$.
Assume that $(U,V)\models \psi\vee\psi'$. 
Thus we have $(U_1,V)\models \psi$ and $(U_2,V)\models\psi'$
for some $U_1,U_2\subseteq U$ such that $U_1\cup U_2 = U$.
By the induction hypothesis, the player $\mathcal{A}$ has a
uniform survival strategy $F_1$ in
the game $G(U_1,V,\psi)$ and $F_2$ in the game $G(U_2,V,\psi')$.
Define a strategy $F$ for $G(U,V,\varphi\vee\psi)$ such that 
\[
 F\bigl(\, (s,+,\psi\vee\psi')\, \bigr)\ =\ \begin{cases}
    \{\psi,\psi'\} & \text{  if }\  s\in U_1\cap U_2\\
    \{\psi\} & \text{ if  }\ s\in U_1\setminus U_2\\
    \{\psi'\} & \text{ if  }\ s\in U_2\setminus U_1\\
 \end{cases}
\]
%
%
%
%
%
%
for each $s\in U$.
On other positions, $F$ agrees with $F_1$ or $F_2$, depending on whether
the input position contains a subformula of $\psi$ or $\psi'$.
%
%
The strategy $F$ gives the same final teams of
assignments as $F_1$ and $F_2$,
and therefore $F$ is a uniform survival strategy for $\mathcal{A}$ in $G(U,V,\psi\vee\psi')$.
Assume there exists a uniform survival strategy $F$ for $G(U,V,\psi\vee\psi')$.
Define $U_1\subseteq U$ to be the set of assignments $s\in U$ such that
$F\bigl(\, (s,+,\psi\vee\psi')\, \bigr)\, =\, \{\psi,\psi'\}$ or
$F\bigl(\, (s,+,\psi\vee\psi')\, \bigr)\, =\, \{\psi\}$.
Similarly, define $U_2\subseteq U$ to be the set of assignments $s\in U$ such that
$F\bigl(\, (s,+,\psi\vee\psi')\, \bigr)\, = \, \{\psi,\psi'\}$ or 
$F\bigl(\, (s,+,\psi\vee\psi')\, \bigr)\, =\, \{\psi'\}$.
Now, $F$ provides uniform survival strategies for $G(U_1,V,\psi)$
and for $G(U_2,V,\psi')$.
By the induction hypothesis,
$(U_1,V)\models \psi$ and $(U_2,V)\models\psi'$.
Since $U_1\cup\, U_2 = U$,
we have $(U,V)\models \psi\vee\psi'$.
We shall not discuss the argument for ordinary generalized
quantifiers, since the related details are essentially provided
by the argument for minor quantifiers.
Assume that $(U,V)\models Mx\, \psi$.
Thus there exists functions
$f:U\rightarrow M^{\mathfrak{A}}$ and $g:V\rightarrow \overline{M}^{\mathfrak{A}}$
such that
$$\bigl(U[x/f]\cup V[x/g] ,U[x/f\, '\, ]\cup V[\, x/g'\, ]\bigr)\models\psi.$$
By the induction hypothesis, there exists a uniform survival strategy $F$ in
$$G\bigl(U[x/f]\cup V[x/g] ,U[x/f\, '\, ]\cup V[\, x/g'\, ],\psi\bigr).$$
Extend the strategy $F$ to a strategy $F^+$ such that 
$F^+\bigl(\, (s,+,Mx\, \psi)\, \bigr)\, =\, f(s)$
for each $s\in U$ and $F^+\bigl(\, (t,-,Mx\, \psi)\, \bigr)\, =\, g(t)$
for each $t\in V$.
The strategy $F^+$ gives the same final teams of assignments as $F$,
and hence $F^+$ is a uniform survival strategy for $\mathcal{A}$ in $G(U,V,Mx\, \psi)$.
Assume $F$ is a uniform survival strategy in $G(U,V,Mx\, \psi)$.
%
%
%
%
Define the function $f:U\rightarrow M^{\mathfrak{A}}$ such that
$f(s)\ =\ F\bigl(\, (s,+,Mx\, \psi)\, \bigr)$
%
%
%
%
%
%
%
for all $s\in U$.
Define also the function $g:V\rightarrow \overline{M}^{\mathfrak{A}}$ such that
$g(s) = F\bigl(\, (s,-,Mx\, \psi)\, \bigr)$
%
%
%
%
%
%
%
for all $s\in V$. Now, $F$ provides a uniform survival strategy for 
$$G\Bigl(U[x/f]\cup V[x/g],U[x/f\, '\, ]\cup V[\, x/g\, '\, ],\psi\Bigr).$$
By the induction hypothesis,
$$\bigl(U[x/f]\cup V[x/g],U[x/f\, '\, ]\cup V[\, x/g\, '\,  ]\bigr)\models\psi.$$
Therefore $(U,V)\models Mx\, \psi$.
\end{proof}
\section{Interpreting dependence logic with double team semantics}\label{interpreting}
In this section we discuss a simple canonical way of
conservatively interpreting variants of dependence logic with
double team semantics.
We also address some issues concerning the interpretation of
dependence logic and  its variants.
Let $k$ be a positive integer and $T$ a non-empty set.
Let $R\subseteq T^k$ be a relation.
We say that $R$ \emph{is a partial function}, if
the following conditions hold.
\begin{enumerate}
\item
If $k = 1$, then $| R | \leq 1$.
\item
If $k>1$, and if we have $(s_1,...,s_{k-1},t)\in R$ and $(s_1,...,s_{k-1},u)\in R$,
then $t = u$.
\end{enumerate}
%
%
%
%
%

%
For each positive integer $k$, let $\mathcal{D}_k$ denote the generalized
quantifier that contains the triples $(A,R,S)$ such that
the following conditions hold.
\begin{enumerate}
\item
$A$ is a nonempty set.
\item
$R\subseteq A^k$ and $S\subseteq A^k$.
\item
$R$ is a partial function and $S = \emptyset$.
\end{enumerate}
Let $\Delta$ be the class $\{\ \mathcal{D}_k\ |\ k\in\mathbb{Z}_+\ \}$.
We next define a translation of formulae of dependence logic $\mathrm{D}$
into a logic with the minor quantifier $\exists^s$ and
generalized atoms
$$D_k(x_1,...,x_k;x_1,...,x_k)$$
for each $k\in\mathbb{Z}_+$; the semantics of the atom $D_k(x_1,...,x_k;x_1,...,x_k)$
is given by the generalized quantifier $\mathcal{D}_k\in \Delta$.
Define the following translation function $T$:
\begin{enumerate}
\item
If $\varphi$ is a first-order atom, then $T(\varphi) = \varphi$
and $T(\neg\varphi) = \neg\varphi$.
\item
$T\bigl(\depst(x_1,...,x_k)\bigr)\ =\ D_k(x_1,...,x_k;x_1,...,x_k)$ and
$T\bigl(\neg \depst(x_1,...,x_k)\bigr)\ =\ \neg D_k(x_1,...,x_k;x_1,...,x_k)$.
\item
$T(\varphi\vee\psi) = \bigl(T(\varphi)\vee T(\psi)\bigr)$.
\item
$T(\varphi\wedge\psi) = \neg \bigl(\neg T(\varphi)\vee \neg T(\psi)\bigr)$.
\item
$T(\exists z\, \varphi) = \exists^s z\, \varphi$.
\item
$T(\forall z\, \varphi) = \neg\, \exists^s z\, \neg\, T(\varphi)$.
\end{enumerate}
The following proposition is immediate.
\begin{proposition}\label{translationofdependence}
Let $\varphi$ be a formula of 
dependence logic. Then $\mA\models_{U}\varphi$ iff\, $\mA,(U,\emptyset)\models T(\varphi)$.
\end{proposition}
Obviously inclusion logic with strict semantics can be similarly translated into a logic
with double team semantics. A different class of generalized quantifiers is needed
in order to define the atoms that inclusion atoms translate to,
and the alternative disjunction $\vee^s$
defined in Section \ref{doubleteamsemantics} is used in the
target language. Also inclusion logic with lax
semantics can be analogously translated. Standard disjunctions
are used in the target language, and existential quantifiers
translate to the lax quantifier $\exists^l$.
\subsection{Interpreting different existential quantifiers}
It is interesting to note that neither the strict nor the lax existential
quantifier is the same as the
minor quantifier $M_{\exists}$ defined by the existential quantifier.
It is natural to consider the three different
existential quantifiers as \emph{epistemic variants} of each other.
Let us briefly discuss what this perspective means.
Consider the game-theoretic semantics for minor quantifiers.
Let $\varphi(x)$ be a first-order formula.
To show that the formula $\exists^s x\,\varphi(x)$
is true, the agent $\mathcal{A}$ simply has to find a single witness $b$ such that
the formula $\varphi(b)$ holds. It is enough that the agent
\emph{knows} one suitable witness $b$ for $\varphi(x)$.
Let $\exists^t$ denote the minor quantifier $M_{\exists}$, and
call it the \emph{total existential quantifier}.
Establishing that $\exists^t x\, \varphi(x)$ holds
is rather different from showing that $\exists^s x\, \varphi(x)$ holds.
This time it is not enough for the agent to know a single witness
for $\varphi(x)$. Instead, the agent has to be able to say, for each element $b$
in the domain of the model under investigation, whether $\varphi(b)$ holds or not.
Therefore the agent has to have an
\emph{epistemically complete understanding} of
which elements of the domain
%
%
satisfy $\varphi(x)$ and which do not.
Indeed, the strict existential quantifier seems to resemble the intuitive understanding of
\emph{ordinary} existence claims better than the total existential
quantifier. But of course $\exists^t$
may be more appropriate than the $\exists^s$ in some non-standard context.  
Establishing that $\exists^l x\, \varphi(x)$ is similar to showing that $\exists^s x\, \varphi(x)$,
but here the agent can provide more than one witness to be taken into
account in the rest of the semantic game.
In the light of Propositions \ref{minorisnormal} and \ref{first-ordercorrespondence},
the three existential quantifiers are interchangeable in the context of
ordinary first-order logic. But it is possible to conceive
natural non-classical logics---possibly dealing with epistemic
considerations, and not necessarily involving generalized
atoms---where different epistemic modes of
existential quantification make a crucial difference.
And obviously it is rather trivial to invent ad hoc atoms $A(x;x)$ such that,
say, $\exists^s x\, A(x;x)$ and $\exists^t x\, A(x;x)$
are not equivalent.
Let $T$ denote the trivial generalized quantifier of the type $(1)$
defined such that $\mA\models_{\mathrm{FO}} Tx\, \varphi$ always holds.
In the double team framework, the statement
$\mA, (\{\emptyset\}, \emptyset) \models  Tx\, P(x)$
means that the player $\mathcal{A}$ can \emph{classify}
all elements $b\in\mathit{Dom}(\mA)$ according to whether $P(b)$
holds or not, i.e., $\mathcal{A}$ can point out exactly the set of values $b$
such that $P(b)$. The statement $\mA, (\{\emptyset\}, \emptyset) \models  \exists^t\, x\, P(x)$
means that the player $\mathcal{A}$ can classify
all elements $b$ of the domain of $\mA$ according to whether $P(b)$
holds or not, and the set of values such that $P(b)$ holds, is nonempty.
These are constructive statements that clearly \emph{differ from the ordinary reading}
of the generalized quantifiers $T$ and $\exists$.
The notion of a minor quantifier provides a novel way of 
generalizing the notion of a generalized quantifier by
providing a fine-grained picture of constructive issues
related to verification of quantified formulae. A possible future research direction could
include considering semantic games, where choosing (sets of) witnesses would be
associated with a \emph{cost}, and  of course the player(s) involved would have
limited amounts of resources with which to meet the costs.
For example, in a very simple case, each element of the domain of a model could be associated with a unit cost.
Such games could help in the analysis of \emph{proving or verifying
theorems with limited resources}.
A tentative approach to first-order logic with a \emph{resource consicious semantics} is
given in \cite{kuusisto6}.
For the sake of entertainment, let us consider the following (naive)
thought experiment.
Flip a coin once in a half a minute period.
Flip the coin again in the next fifteen seconds.
In the next $7.5$ seconds, flip the coin again the third time.
Keep doing this, always halving the duration of the previous period.
Do this so that for at least the last third of each period,
the coin is in rest, so that no angular momentum is preserved
from one period to another.
Keep doing this for one minute, and after that, do
nothing for at least three minutes. Under sufficiently naive
and idealized classical assumptions, this
experiment can be carried out. It is then a
rather puzzling question what the state 
of the coin is when two minutes has passed.
Is it heads or tails? Is it something else?
A truly annoying state!
Of course we do not care about Planck's time 
and all that here. This is an entirely classical 
paradox. There are of course several ways of 
adding constraints that make the experiment impossible.
For example, we can stipulate that each
flipping of the coin consumes at least some unit amount $r$ 
of \emph{resources}, and the amount
of available resources is not infinite.
\subsection{Observations concerning atoms}
Above we translated dependence atoms $\depst(x_1,...,x_k)$ into
atomic expressions $D_k(x_1,...,x_k;x_1,...,x_k)$. This
creates an unnecessary syntactic complication: it seems rather pointless to
write $x_1,...,x_k$ twice. We can of course 
avoid such complications in similar translations 
by simply allowing for syntactic atomic expressions $A(x_1,...,x_k)$,
whose semantics is defined by a generalized quantifier of the type $(k,k)$,
and more generally, atoms $B(\overline{x}_1,...,\overline{x}_k)$ 
defined by quantifiers of the type
$(i_1,...,i_k,i_1,...,i_k)$.
Atomic expressions with the simple syntactic form
$B(\overline{x}_1,...,\overline{x}_n)$, where
the symbol $;$ does not appear, may perhaps be more appropriate
for example from the point of view of issues in natural language analysis.
%

%
%
%

%
Let $(Q,P)$ be a pair of generalized quantifiers of type $(i_1,...,i_k)$.
Consider atomic expressions of the type
$B(\overline{x}_1,...,\overline{x}_k),$
where each tuple $\overline{x}_j$ is of the length $i_j$.
Extend the double team semantics such
that $\mA,(U,V)\models B(\overline{x}_1,...,\overline{x}_k)$ iff
\begin{align*}
& \bigl(\mathrm{Rel}(\mathfrak{A},U,\overline{x}_1),...,
\mathrm{Rel}(\mathfrak{A},U,\overline{x}_k)\bigr)\in Q^{\mathfrak{A}}\\
\text{ and } & \\
& \bigl(\mathrm{Rel}(\mathfrak{A},V,\overline{x}_{1}),...,
\mathrm{Rel}(\mathfrak{A},V,\overline{x}_{k})\bigr)\in P^{\empty\, \mathfrak{A}}.
\end{align*}
If $P = \overline{Q}$, we call the atom 
defined by $(Q,P)$ a \emph{symmetric atom}.
It is interesting to note that above
it would \emph{not} have been possible to translate
atoms $\depst(x_1,...,x_k)$ to
symmetric atoms $B(x_1,...,x_k)$.
The truth definitions of the
dependence atom $\depst(x_1,...,x_k)$
and its negated counterpart $\neg \depst(x_1,...,x_k)$
are not related in a way that
would lead to the required symmetry.
Currently, there does not seem to be an account in the dependence logic
literature that thoroughly analyzes issues related to the choice of the definition
$\mA\models_{U}\neg\depst(x_1,...,x_k)$ iff $U = \emptyset$.
It is well known that dependence logic is 
downwards closed, i.e., if $\mA\models_U\varphi$ and $V\subseteq U$,
then $\mA\models_V\varphi$.
The definition
$\mA\models_{U}\neg\depst(x_1,...,x_k)\ \Leftrightarrow\
\mA\not\models_{U}\, \depst(x_1,...,x_k)$ 
would lead to a logic that is not downwards closed.
Downwards closure is a natural
intuitive property of dependence logic.
Downwards closure
reflects the background \emph{intuition} that a team satisfies a
formula if all assignments in it satisfy the formula.\footnote{
We of course recall that this is nothing more than the background intuition.}
%
%
%
With the semantics $\mA\models_U\neg \depst(x_1,...,x_k)\ \Leftrightarrow\ U =\emptyset$
for negated dependence atoms, dependence logic is downwards closed, but
still this choice of definition may seem intuitively somewhat arbitrary. At least
the definition calls for further reflection.
In inclusion logic \cite{galliani}, negated atoms are not
allowed, and thereby no analogous problem of interpretation arises.
%
%
%
But the possibility of negating atomic formulae---a
syntactically natural feature---is compromised.\footnote{Of 
course it should be kept in mind here that
negation in the context of team semantics is
not the contradictory negation on the level of teams.}
We shall not attempt to analyze the issue concerning negated atoms further,
but we wish to point out that the double team framework can perhaps help in
advancing the interpretation
of formalisms in the family of dependence logic, for
at least the following three reasons.
Firstly, the double team semantics provides a
\emph{general} framework for interpreting various different variants of dependence logic.
How exactly generality is related to elucidation is an interesting question itself,
and obviously we shall not attempt to analyze this issue in this article,
but a general framework does offer a setting for interpreting and \emph{comparing}
different systems embeddable in the framework. 
For example, we have above given possible interpretations for the strict and
lax existential quantifiers, and also observed that neither of these quantifiers is
the same as the minor quantifier defined by the
ordinary existential quantifier.
Secondly, the double team semantics has obvious
\emph{symmetric} duality properties concerning the interpretation
of negation.\footnote{Issues
related to different modes of negation seem to lead to notable issues concerning
the intuitive interpretation of formulae in various systems
based on team semantics.
Related issues are likely to arise also in the framework presented in
the current article. A  rather obvious framework for the analysis
of different negations would involve systems based on
sets of teams, or possibly pairs of sets of teams, or
something similar. Such a framework
would allow for a more direct access to
different uses of the contradictory negation on
different levels of type theory.}
%
%
%
%
%
%
How exactly symmetries 
lead to elucidation is an interesting
question that we shall not attempt to analyze in this article.
But whatever their explanatory power may be,
at least symmetric duality properties have an obvious
mathematical appeal.\footnote{Symmetries, as well as
presentations in a more general well understood framework,
seem to play an important role in \emph{explanations}
in the mathematical and analytic realms.
Of course also for example analogies play a role.}
%
%
%
%
%

%
Finally, the double team semantics has a very natural
game-theoretic counterpart.
A game-theoretic semantics can---at least in some
reasonable sense---be seen as \emph{fundamental}
in relation to other approaches, because it provides an
\emph{action based} account of the meaning of formulae.
On the face of it, semantic games can seem rather 
far removed from contexts where natural
language is learned, but 
it is not difficult to invent action-based scenarios
described by semantic games, where the
meaning of the words \emph{all} and \emph{exists}
%
%
becomes at least elucidated to an agent.
Tarski's semantics for first-order logic essentially gives simply a \emph{translation} of symbols into
their natural language counterparts.\footnote{Of course Tarski's
semantics also ties truth of first-order formulae to the notion of a model,
and additionally provides an inductive method for computing 
truth values of formulae based on the truth values of the atoms.}
This resembles translating a
language into another. An interpreter has to be familiar with the
target language in order to understand the truth definition.
The situation seems different in the context of
action-based truth definitions.\footnote{Of course
game-theoretic truth definitions are still usually \emph{described} in natural language.}
%
%
%
In fact, it seems to even make reasonable sense to  consider action-based approaches
in attempts to define semantics for natural languages.\footnote{
%
%
%
%
%
%
A person's first language is learned via action-based situations.
But it seems appealing to
think that logical understanding is also,
up to some extent, hard-wired in the brain or
physically somehow forced. For example it is easy to conceive a person
learning the meaning of the word \emph{all} in situations involving rather small
collections of objects. It is interesting that the person still 
learns the correct meaning of the word \emph{all}, instead of associating the
word with some exotic quantifier that is equivalent to $\forall$ in models of size
less than, say, $400$, or $2^{1000}$. There seems to be a natural cognitive and inductive generalization process
involved here.}
Action-based language acquisition is discussed for
example in \cite{steels}.
Wittgenstein's \emph{language games}, described in \cite{wittgenstein},
are a classical example of related considerations.
Of course the claim about fundamentality of action-based
approaches to semantics is highly debatable, and obviously
we do not wish to engage in that debate here.
We simply wish to point out that the game-theoretic
counterpart of double team semantics
does provide a description of an action-based approach to the meaning
of generalized quantifiers and atoms. 
In this context it is worth noting that the game-theoretic semantics is also a
novel canonical semantics for ordinary extensions
of first-order logic with generalized quantifiers---extensions
that do not involve generalized atoms.
\section{Complexity of $\mathrm{DC^2}$}\label{complexity}
\subsection{The logic $\mathrm{DC}^2$}
In this section we define the logic $\mathrm{DC}^2$. This logic
extends both ordinary two variable dependence logic $\mathrm{D}^2$
and two-variable logic with counting $\mathrm{FOC}^2$,
as we shall see. 
%

%
%
%
%
%
%
%
%
%
%
%
%
%
%
%
%
%
%
%
%
%
%
%
%

%
Let $k$ be a positive integer.
Define the classes
$$\mathcal{E}\, :=\, \{\, (A,B,\emptyset)\ |\
A\text{ is a non-empty set and }B\subseteq A\text{ satisfies }| B |\geq k\ \}$$
and
$$\mathcal{F}\, :=\, \{\, (A,\emptyset,B)\ |\ A\text{ is a non-empty set, }
B\subseteq A\text{ and }| A\setminus B | < k\ \}.$$
The pair $(\mathcal{E},\mathcal{F})$ defines the
\emph{minor counting quantifier} $\exists^{\mathit{\geq k}}$.
Notice that $\exists^{\mathit{\geq k}}$ is a minor
of the generalized quantifier
$\{\ (A,B)\ |\ A\not=\emptyset,\ |B|\geq k\ \}$.
%
%

%
Let $\tau$ be a relational vocabulary consisting of
the union of a countably infinite set of
unary relation symbols and a countably infinite set of binary relation symbols.
Fix two distinct first-order variable symbols $x$ and $y$.
Define $\mathcal{A}(\tau)$ to be the smallest set $T$ such that
the following conditions hold.
\begin{enumerate}
\item
If $P\in\tau$  and $z\in\{x,y\}$, then $P(z)\in T$.
\item
If $R\in\tau$, and $z,z'\in\{x,y\}$, then $R(z,z')\in T$.
\item
If $z,z'\in\{x,y\}$, then $z = z'\in T$.
\end{enumerate}
Define \emph{two-variable first-order logic with counting} ($\mathrm{FOC}^2$)
to be the smallest set $T$ such that the following conditions are satisfied.
\begin{enumerate}
\item
$\mathcal{A}(\tau)\subseteq T$.
\item
If $\varphi\in T$, then $\neg\varphi\in T$.
\item
If $\varphi,\psi\in T$, then $(\varphi\vee\psi)\in T$.
%
%
\item
If $\varphi\in T$, $z\in\{x,y\}$, and 
$k$ is a positive integer, then $\exists^{\geq k} z\, \varphi\in T$.
\end{enumerate}
Here $\exists^{\geq k}$ denotes
the minor quantifier $\bigl(\mathcal{E},\mathcal{F}\bigr)$.
The syntax of $\mathrm{FOC}^2$ contains only first-order atoms,
and in the light of Propositions
\ref{minorisnormal} and \ref{first-ordercorrespondence},
it makes no difference whether we use ordinary
Tarskian semantics or double team semantics in the interpretation
of $\mathrm{FOC}^2$-formulae; if $\varphi$ is a
formula of $\mathrm{FOC}^2$, and $\varphi'$
denotes the formula obtained from $\varphi$ by
replacing each symbol $\exists^{\geq k}$ by a symbol
that denotes the corresponding ordinary 
generalized quantifier, 
then $\mA,s\models_{\mathrm{FO}}\varphi'$
iff $\mA,\bigl(\{s\},\emptyset\bigr)\models\varphi$.
Define $\mathcal{A}^+(\tau)$ to be the smallest set $T$ such that
the following conditions hold.
\begin{enumerate}
\item
If $\varphi\in\mathcal{A}(\tau)$, then $\varphi\in T$.
%
%
\item
If $z,z'\in\{x,y\}$, then $\depst(z,z')\in T$ and $\depst(z)\in T$.
Here we assume that $z\not= z'$, i.e., $z$ and $z'$
are different variable symbols.
\end{enumerate}
The set of formulae of $\mathrm{DC}^2$ is
the smallest set $T$ such that the following conditions hold.
\begin{enumerate}
\item
$\mathcal{A}^+(\tau)\subseteq T$.
\item
If $\varphi\in T$, then $\neg\varphi\in T$.
\item
If $\varphi,\psi\in T$, then $(\varphi\vee\psi)\in T$.
\item
If $\varphi\in T$ and $z\in \{x,y\}$, then $\exists^{\mathit{s}} z\, \varphi\in T$.
\item
If $\varphi\in T$, $z\in\{x,y\}$, and 
$k$ is a positive integer, then $\exists^{\geq k} z\, \varphi\in T$.
\end{enumerate}
%
%
%
%
%
%

%
%

%
Let $z,z'\in\{x,y\}$ be variables.
The semantics of the atom $\depst(z)$ in is
defined in $\mathrm{DC}^2$ such that $\mA,(U,V)\models\, \depst(z)$ iff
$\mA,(U,V)\models D_1(z;z)$. Similarly,
$\mA,(U,V)\models\, \depst(z,z')$ iff
$\mA,(U,V)\models D_2\bigl((z,z');(z,z')\bigr)$.
The following lemma is trivial.
\begin{lemma}\label{downwards}
Let $(U,V)$ and $(S,T)$ be a double teams such that $S\subseteq U$ and $T\subseteq V$.
Let $\varphi\in\mathcal{A}^+(\tau)$ be any atomic formula of $\mathrm{DC}^2$.
If $(U,V)\models\, \varphi$, then $(S,T)\models\, \varphi$.
%
%
%
%
\end{lemma}
Obviously $\mathrm{FOC}^2$ is contained in $\mathrm{DC}^2$,
but also $\mathrm{D}^2$ is essentially contained in $\mathrm{DC}^2$
via the translation $T$ defined in
Section \ref{interpreting} (see Proposition \ref{translationofdependence}).
%

%
%

%
%
%
%
We have somewhat blindly copied the atoms of $\mathrm{D}$
into $\mathrm{DC}^2$; it is an interesting question what these atoms
\emph{exactly mean} in $\mathrm{DC}^2$,
and what other kinds of atoms and quantifiers
should be considered. We leave such questions 
for the future. Our objective in the rest of the current article is simply to 
show how the double team semantics nicely
facilitates the $\mathrm{NEXPTIME}$-completeness proof of
the logic $\mathrm{DC}^2$, and other sufficiently similar logics.
%

%
%

%
\subsection{$\mathrm{DC}^2$ is 
NEXPTIME-complete}
An input to the satisfiability or finite satisfiabilily problem of $\mathrm{DC}^2$ is any
sentence $\varphi$ of $\mathrm{DC}^2$.
Note that the set of non-logical symbols of $\varphi$
is limited to unary and binary relation symbols only.
The satisfiability problem asks whether there
exists a model $\mA$ such that $\mA,\bigl(\{\emptyset\},\emptyset\, \bigr)\models\varphi$,
while the finite satisfiability problem asks whether there exists a
\emph{finite} model $\mB$ such that $\mB,\bigl(\{\emptyset\},\emptyset\, \bigr)\models\varphi$.
An input to the satisfiability or finite satisfiabilily problem of $\mathrm{FOC}^2$ is any
sentence $\varphi$ of $\mathrm{FOC}^2$;
the set of non-logical symbols of $\varphi$
is limited to unary and binary relation symbols only.
The satisfiability problem asks whether there
exists a model $\mA$ such that $\mA\models_{\mathrm{FO}}\varphi$,
while the finite satisfiability asks whether there exists a
finite model $\mB$ such that $\mB\models_{\mathrm{FO}}\varphi$.
Below we show that the
satisfiability and finite satisfiability problems of $\mathrm{DC}^2$ are $\mathrm{NEXPTIME}$-complete.
Our proof uses the fact
that the satisfiability and finite satisfiability problems of $\mathrm{FOC}^2$
are $\mathrm{NEXPTIME}$-complete (see \cite{pratthartmann}).
We translate $\mathrm{DC}^2$ formulae into equisatisfiable formulae of
$\mathrm{FOC}^2$ with a polynomial cost in the formula length;
the translation can be carried out in logarithmic space.
A formula $\varphi$ translates to a formula
$$\varphi^*\ :=\ \psi_{\mathit{initial}}\
\wedge\ \bigwedge\limits_{\chi\ \in\ \mathrm{SUB}_{\varphi}}\psi_{\chi}.$$
%
%
%
%
Each conjunct $\psi_{\chi}$ contains two
fresh relation symbols $S_{\chi}$ and $T_{\chi}$.
Intuitively, the pair $(S_{\chi},T_{\chi})$
encodes the double team $(U_{\chi},V_{\chi})$ that
satisfies $\chi$, when $\varphi$ is evaluated in a model
where $\varphi$ holds. 
If $\chi$ is not an atom, the formula $\psi_{\chi}$ also contains auxiliary formulae
that describe how double teams
evolve, when $\varphi$ is evaluated.
For example, if $\chi = \exists^{s} x\, \alpha$, then
$\psi_{\chi}$ describes how the double team $(U_{\chi},V_{\chi})$
gives rise to a double team $(U_{\alpha},V_{\alpha})$ that satisfies $\alpha$.
%

%
%
%
%
In addition to relation symbols $S_{\chi}, T_{\chi}$ corresponding to
double teams, further fresh variable symbols are used in $\psi_{\chi}$
when $\chi$ is a formula whose main connective is a quantifier.
The fresh symbols $E_{\alpha}^{\mathit{Uf}},\,  
E_{\alpha}^{\mathit{Vg'}}$ correspond to the 
teams $U[z/f],\, V[z/g']$ needed in the
truth definition of quantified formulae.\footnote{It turns out that
there is no need for symbols $E_{\alpha}^{\mathit{Uf'}},E^{\Vg}$.
In fact, even the symbols $E_{\alpha}^{\mathit{Uf}}$ and  
$E_{\alpha}^{\mathit{Vg'}}$ could  be eliminated,
but we keep them for the sake of presentation.
The reader may consider further minor quantifiers for 
which the proofs in this section go trough. In doing so,
using  extra predicates  $E_{\alpha}^{\mathit{Uf}},\,  
E_{\alpha}^{\mathit{Vg'}},\, E_{\alpha}^{\mathit{Uf'}},E^{\Vg}$
may help.}
The logic $\mathrm{FOC}^2$ uses only two variables,
and this creates some obstacles that need to be overcome
when writing the formulae $\psi_{\chi}$.
Due to the expressivity limitations of $\mathrm{FOC}^2$, we
need to control the evaluation of double teams $(U_{\chi},V_{\chi})$.
For example, if $\chi = \exists^{s} x\, \alpha$ and the
domain of $U_{\chi}$ contains $x$, then we need to 
ensure that the new values of $x$ in $U[ x/f ]$ are 
in a sense \emph{independent} of the old values of $x$ in $U$;
the related definitions are formally discussed below.
Lemma \ref{downwardslemma} ensures that
we can indeed control the evaluation of the teams $(U_{\chi},V_{\chi})$
in the desired way, and therefore the two-variable logic $\mathrm{FOC}^2$
is sufficiently expressive for our purposes.
While formulae $\psi_{\chi}$ describe double teams
corresponding to subformulae of $\varphi$, the formula
$\psi_{\mathit{initial}}$ simply sets the stage by
asserting that the team satisfying $\varphi$ itself
corresponds to the team $\bigl(\{\emptyset\},\emptyset\, \bigr)$.
We are now ready for the formal details of the proof that
the logic $\mathrm{DC}^2$ is complete for $\mathrm{NEXPTIME}$.
We begin by some auxiliary definitions and the auxiliary
Lemmata \ref{downwardslemma} and \ref{emptysetlemma}.
We then formally define the conjuncts of $\varphi^*$
and show that $\varphi$ and $\varphi^*$ are equisatisfiable.
Let $U$ be a team for a model $\mA$. Let $A$ be the domain of $\mA$.
Assume the domain of $U$ contains the variable $x$.
Let $s,t\in U$ be assignments such that $s(z) = t(z)$
for all $z\in\mathit{Dom}(U)\setminus\{x\}$.
Then $t$ is called an
\emph{$x$-variant of $s$} (in $U$).
Note that $s$ is an $x$-variant of itself.
Let $M$ be a minor quantifier, and
let $N\in\{\, M^{\mA},\overline{M}^{\mA}\, \}$.
Let $f:U\rightarrow N$ be a function.
Assume that we have we have $f(s) = f(t)$
for all valuations $s,t\in U$ such
that $t$ is an $x$-variant of $s$.
Then we say that $f$ is 
\emph{$x$-independent}.
Let $g:U\rightarrow N$ be a
function. Assume $g_0:U\rightarrow N$ is an
$x$-independent function such that for each $s\in U$,
there exists an $x$-variant $t\in U$ of $s$ such that
$g_0(s) = g(t)$. Then $g_0$ is an
\emph{$x$-independent minor of $g$}.
Let $U$ be a team with the domain $\{x,y\}$
and for a model $\mA$,
where $x$ and $y$ are the variables used in $\mathrm{DC}^2$ and $\mathrm{FOC}^2$.
We let $\mathit{Rel}(U)$
denote the relation $\mathit{Rel}\bigl(U,\mA,(x,y)\bigr)$, as
opposed to $\mathit{Rel}\bigl(U,\mA,(y,x)\bigr)$. This means that
we in a sense nominate $x$ as the first variable and $y$ as the second
one. This convention will simplify the notation below.
If $U$ is a team with the domain $\{z\}$, where $z\in\{x,y\}$,
then we let $\mathit{Rel}(U)$ denote $\mathit{Rel}\bigl(U,\mA,z\bigr)$.
%

%
%

%
\begin{lemma}\label{downwardslemma}
Let $\psi$ be a formula of\, $\mathrm{DC}^2$.
Let $M\in \{\exists^{\mathit{s}},\exists^{\geq k}\}$, where $k$ is a positive
integer. Let $z\in\{x,y\}$ be a variable.
Let $f:U\rightarrow M^{\mA}$ and $g:V\rightarrow \overline{M}^{\mA}$
be functions, and let $f_0$ and $g_0$ be $z$-independent minors of $f$ and $g$,
respectively. If
$$\mA,\bigl(U[z/f]\cup V[z/g],\, U[z/f\, '\, ]\cup V[\, z/g\, '\,  ]\bigr)\models\psi,$$
then
$$\mA,\bigl(U[z/f_0]\cup V[z/g_0],\, U[z/f_0\, '\, ]\cup V[\, z/g_0\, '\,  ]\bigr)\models\psi.$$
\end{lemma}
\begin{proof}
%
%
Assume that
\begin{align}\label{eqn1}
\bigl(U[z/f]\cup V[z/g],\, U[z/f\, '\, ]\cup V[\, z/g\, '\,  ]\bigr)\models\psi.
\end{align}
It is clear that $U[z/f_0]\subseteq U[z/f]$ and $V[z/g_0\, '\, ]\subseteq V[\, z/g\, '\, ]$.
It is also clear that $V[z/g_0\, ] =  V[\, z/g\, ] =
U[x/f_0\, '\, ] =  U[x/f\, '\, ] = \emptyset$.
Therefore
\begin{align}\label{eqn2}
U[z/f_0]\cup V[z/g_0]\subseteq U[z/f]\cup V[z/g]
\end{align}
and
\begin{align}\label{eqn3}
U[z/f_0\, '\, ]\cup V[\, z/g_0\, '\,  ] \subseteq U[z/f\, '\, ]\cup V[\, z/g\, '\,  ].
\end{align}  
We define a strategy for the player $\mathcal{A}$ in the game
$$G^*\ :=\ G\bigl(\mA,U[z/f_0]\cup V[z/g_0],U[z/f_0\, '\, ]\cup V[\, z/g_0\, '\,  ],\psi\bigr).$$
Due to Equation \ref{eqn1}, player $\mathcal{A}$ has a uniform survival strategy $F$ in the game
$$G\ :=\ G\bigl(\mA,U[z/f]\cup V[z/g],U[z/f\, '\, ]\cup V[\, z/g\, '\,  ],\psi\bigr).$$
Due to Equations \ref{eqn2} and \ref{eqn3}, the strategy $F$ can be
canonically restricted to a
strategy $H$ for the game $G^*$.
We need to show that $H$ is a \emph{uniform survival strategy} for $\mathcal{A}$ in $G^*$.
Since $H$ is a restriction of the uniform survival strategy $F$, the player $\mathcal{A}$
survives each play of the game $G^*$ played according to $H$.
To see that $H$ is a uniform survival strategy, consider the sets $S^*(\chi)$ and $T^*(\chi)$
of positive and negative final assignments for an atomic subformula $\chi$ of $\psi$,
when $\mathcal{A}$ follows $H$ in $G^*$.
Let $S(\chi)$ and $T(\chi)$ be the corresponding sets in the game $G$,
when $\mathcal{A}$ follows $F$.
It is clear that $S^*(\chi)\subseteq S(\chi)$ and $T^*(\chi)\subseteq T(\chi)$.
Due to Equation \ref{eqn1}, we have $\bigl(S(\chi),T(\chi)\bigr)\models\chi$.
By Lemma \ref{downwards}, we have $\bigl(S^*(\chi),T^*(\chi)\bigr)\models\chi$,
and therefore $H$ is a uniform survival strategy
for $\mathcal{A}$ in the game $G^*$.
\end{proof}
It turns out that we do not actually need Lemma \ref{downwardslemma} in
full generality. The essential part of the Lemma is that
functions $f:U\rightarrow{\exists^s}^{\text{ }\mA}$ can be assumed 
to be $z$-independent; see the proof of Lemma \ref{lemmafour}
for further details. 
Let $\psi$ be a sentence of $\mathrm{DC}^2$.
Define $\mathit{Dom}_{\psi}(\psi) = \emptyset$.
Assume then that we have defined $\mathit{Dom}_{\psi}(\chi)$ for $\chi\in\mathrm{SUB}_{\psi}$.
\begin{enumerate}
\item
If $\chi = \exists^{\geq k} x\, \alpha$ or $\chi = \exists^{s} x\, \alpha$, define
$\mathit{Dom}_{\psi}(\alpha) = \mathit{Dom}_{\psi}(\chi)\cup\{x\}$.
\item
If $\chi = \exists^{\geq k} y\, \alpha$ or $\chi = \exists^{s} y\, \alpha$, define
$\mathit{Dom}_{\psi}(\alpha) = \mathit{Dom}_{\psi}(\chi)\cup\{y\}$.
\item
If $\chi = \chi_1\vee\chi_2$, define $\mathit{Dom}_{\psi}(\chi_1)
= \mathit{Dom}_{\psi}(\chi_2) = \mathit{Dom}_{\psi}(\chi)$.
\item
If $\chi = \neg\alpha$, define $\mathit{Dom}_{\psi}(\alpha)= \mathit{Dom}_{\psi}(\chi)$.
\end{enumerate}
\begin{lemma}\label{emptysetlemma}
Let $\psi$ be a sentence of\, $\mathrm{DC}^2$
and $U$ a team with exactly one assignment.
Then $\mA,\bigl(U,\emptyset\bigr)\models\psi$ iff
$\mA,\bigl(\{\emptyset\},\emptyset\, \bigr)\models\psi$.
\end{lemma}
\begin{proof}
%
%
Let $s$ be the unique assignment in $U$.
Assume that $\mA,(U,\emptyset)\models\psi$.
The player $\mathcal{A}$ has a uniform survival strategy $F$ in the game
$G\bigl(\{s\},\emptyset,\psi\bigr)$. (Recall that we
may write $G\bigl(U,\emptyset,\psi\bigr)$ instead of $G\bigl(\mA,U,\emptyset,\psi\bigr)$.)

Now, let $F'$ be the strategy for $G(\{\emptyset\},\emptyset,\psi)$,
where $\mathcal{A}$ canonically copies the moves determined by $F$
in $G(\{ s\},\emptyset,\psi)$. This means that for
each position $(\mA,t,\#, \alpha)$ in $G(\{\emptyset\},\emptyset,\psi)$,
we define $F'(\mA,t,\#, \alpha) := F(\mA, t',\#, \alpha)$,
where $t = t' \upharpoonright \mathit{Dom}_{\psi}(\alpha)$,
i.e., $t$ is the restriction of $t'$ to 
the set $\mathit{Dom}_{\psi}(\alpha)$.
It is easy to show that $F'$ is well-defined.
Let $\chi$ be an arbitrary atom of $\psi$,
and let $S(\chi)$ and $T(\chi)$ be the sets of
positive and negative final assignments for $\chi$ in
the game $G\bigl(\{s\},\emptyset,\psi\bigr)$, when $\mathcal{A}$
follows the strategy $F$. Recalling that $\psi$ is a
sentence, it is easy to see that the teams of positive and negative final assignments
$S^{*}(\chi)$ and $T^{*}(\chi)$
that arise in $G(\{\emptyset\},\emptyset,\psi)$ when $\mathcal{A}$ follows $F'$, are exactly
the same teams as those that arise in $G(\{ s\},\emptyset,\psi)$ when $\mathcal{A}$
follows $F$, i.e., $S^{*}(\chi) = S^{}(\chi)$ and $T^{*}(\chi) = T^{}(\chi)$.
Thus $\mA,(\{\emptyset\},\emptyset\, )\models\psi$.
The converse implication is similar.
Assume that $\mA,(\{\emptyset\},\emptyset)\models\psi$.
Thus $\mathcal{A}$ has a uniform survival strategy $H$ in
the game $G(\{\emptyset\},\emptyset,\psi)$.
let $H'$ be the strategy for $G(\{\, s \},\emptyset,\psi)$,
where $\mathcal{A}$ canonically copies the moves determined by $H$
in $G(\{ \emptyset \},\emptyset,\psi)$. This means that for
each position $(\mA,t,\#, \alpha)$ in $G(\{\, s \},\emptyset,\psi)$,
we define $H'(\mA,t,\#, \alpha) := H(\mA, t',\#, \alpha)$,
where $t' = t\upharpoonright \mathit{Dom}_{\psi}(\alpha)$.
Let $\chi$ be an arbitrary atom of $\psi$,
and let $S(\chi)$ and $T(\chi)$ be the sets of
positive and negative final assignments for $\chi$ in
the game $G\bigl(\{\emptyset\},\emptyset,\psi\bigr)$, when $\mathcal{A}$
follows the strategy $H$.
It is easy to see that the teams of positive and negative final assignments
$S^{*}(\chi)$ and $T^{*}(\chi)$
that arise in $G(\{ s \},\emptyset,\psi)$ when $\mathcal{A}$ follows $H'$, are exactly
the same teams as those that arise in $G(\{ \emptyset \},\emptyset,\psi)$ when $\mathcal{A}$
follows $H$, i.e., $S^{*}(\chi) = S^{}(\chi)$ and $T^{*}(\chi) = T^{}(\chi)$.
Thus $\mA,(\{ s \},\emptyset\, )\models\psi$.
\end{proof}
Now \emph{fix} a sentence $\varphi$ of $\mathrm{DC}^2$.
Our next aim is to define the $\mathrm{FOC}^2$ sentence $\varphi^*$
and then prove that $\varphi$ and $\varphi^*$ are equisatisfiable.
Let $\psi$ be an arbitrary subformula of $\varphi$.
Having fixed the sentence $\varphi$, we shall write $\mathit{Dom}(\psi)$
instead of $\mathit{Dom}_{\varphi}(\psi)$ in the rest of 
the article.
Let $\sigma$ be the set of relation symbols that occur in $\varphi$. 
As discussed above, $\varphi^*$
contains extra relation symbols that encode information concerning subformulae of $\varphi$.
%
%
Let $\mathrm{QSUB}_{\varphi}$ denote
the set of formulae $\alpha\in\mathrm{SUB}_{\varphi}$
such that there exists another subformula $\psi = Qz\,\alpha \in\mathrm{SUB}_{\varphi}$,
where $Q\in \{\exists^{\geq k}, \exists^s\}$.
For each formula $\alpha \in\mathrm{QSUB}_{\varphi}$, define the
fresh relation symbols $E_{\alpha}^{\mathit{Uf}}$
and $E_{\alpha}^{\mathit{Vg'}}$. The arity of
each of these symbols is $|\mathit{Dom}(\alpha)|$, i.e., the number of
variables in $\mathit{Dom}(\alpha)$.
Additionally, for each formula $\chi\in\mathrm{SUB}_{\varphi}$,
define fresh relation symbols $S_{\chi}$ and $T_{\chi}$. The arity of the symbols $S_{\chi}$
and $T_{\chi}$ is equal to $|\mathit{Dom}(\chi)|$. The set of relation symbols in $\varphi^*$
is the set 
\begin{multline*}
\sigma\ \cup\ \{\, E_{\alpha}^{\mathit{Uf}}\ |\ \alpha\, \in\, \mathrm{QSUB}_{\varphi}\, \}\
\cup\ \{\, E_{\alpha}^{\mathit{Vg'}}\ |\ \alpha\, \in\, \mathrm{QSUB}_{\varphi}\, \}\\
\cup\ \{\, S_{\chi}\ |\ \chi\in\mathrm{SUB}_{\varphi}\ \}\
\cup\ \{\, T_{\chi}\ |\ \chi\in\mathrm{SUB}_{\varphi}\ \}.
\end{multline*}
Let $\sigma^*$ denote this set.
%

%
%
%
%
%
%

%
Define $\psi_{\mathit{initial}}\, :=\, \exists^{=1} x\, S_{\varphi}(x)\wedge\neg\exists x T_{\chi}(x)$.
Here $\exists^{=1}x$ is the $\mathrm{FOC}^2$-expressible quantfier that states that
there exists exactly one $x$ satisfying the quantified formula.
To fully define $\varphi^*$, we still need to define
the formulae $\psi_{\chi}$ for each formula $\chi\in\mathrm{SUB}_{\varphi}$.
%
%

%
Let $\chi\in\mathrm{SUB}_{\varphi}$. If $\chi = \chi_1\vee\chi_2$
and $\mathit{Dom}(\chi) = \{x,y\}$, 
then $\psi_{\chi}$ is the conjunction of the formulae
\begin{align*}
&\psi_{\chi}^1\ := \forall x\forall y \Bigl(\,  S_{\chi}(x,y)\
\leftrightarrow\ \bigl(S_{\chi_1}(x,y)\, \vee\, S_{\chi_2}(x,y)\bigr)\Bigr),\\
&\psi_{\chi}^2\ := \forall x\forall y \Bigl(\,  T_{\chi}(x,y)\
\leftrightarrow\ T_{\chi_1}(x,y)\Bigr),\\
&\psi_{\chi}^3\ := \forall x\forall y \Bigl(\,  T_{\chi}(x,y)\
\leftrightarrow\ T_{\chi_2}(x,y)\Bigr).
\end{align*}
%
%
%
%
%
 If $\chi = \exists^{\geq k}y\, \alpha$
and $\mathit{Dom}(\chi) = \{x,y\}$, 
then $\psi_{\chi}$ is the conjunction of the formulae
\begin{align*}
&\psi_{\chi}^1\ := \ \forall x\forall y\bigl(\ S_{\chi}(x,y)\
\rightarrow\ \exists^{\geq k} y\, E_{\alpha}^{\mathit{Uf}}(x,y)\, \bigr),\\
%
%
%
&\psi_{\chi}^2\ := \ \forall x\forall y\bigl(\, E_{\alpha}^{\Uf}(x,y)\
\rightarrow\ \exists y\, S_{\chi}(x,y)\, \bigr),\\
&\psi_{\chi}^3\ := \ \forall x\forall y\bigl(\ T_{\chi}(x,y)\
\rightarrow\ \neg \exists^{\geq k} y\, \neg E_{\alpha}^{\Vg'}(x,y)\, \bigr),\\
&\psi_{\chi}^4\ := \ \forall x\forall y\bigl(\, E_{\alpha}^{\Vgp}(x,y)\
\rightarrow\ \exists y\, T_{\chi}(x,y)\, \bigr),\\
&\psi_{\chi}^5\ := \forall x \forall y\bigl( S_{\alpha}(x,y)\
\leftrightarrow\ E_{\alpha}^{\Uf}(x,y)\, \bigr),\\
&\psi_{\chi}^6\ := \forall x \forall y\bigl(\, T_{\alpha}(x,y)\
\leftrightarrow\  E_{\alpha}^{\Vgp}(x,y)\, \bigr).
\end{align*}
If $\chi$ is the atomic formula $\depst(x,y)$,
and thus necessarily $\mathit{Dom}(\chi) = \{x,y\}$,
then $\psi_{\chi}$ is the conjunction of the formulae
\begin{align*}
&\psi_{\chi}^1\ :=\ \neg\exists x\exists^{\geq 2}y\, S_{\chi}(x,y),\\
&\psi_{\chi}^2\ :=\ \neg\exists x\exists y\, T_{\chi}(x,y).
\end{align*}
%
%
%
%
%

%
The structure of each formula $\psi_{\chi}$, where $\chi\in\mathrm{SUB}_{\varphi}$, depends on
$\chi$ and $\mathit{Dom}(\chi)$.
A complete list of these formulae is given in the Appendix.
\begin{lemma}\label{lemmafour}
Assume $\mA$ is a $\sigma$-model such that $\mA,\bigl(\{\emptyset\},\emptyset\, \bigr)\models\varphi$.
Let $A$ be the domain of\, $\mA$.
Then there exists a $\sigma^*$-model $\mA^*$ with the same domain $A$
such that $\mA^*\models_{\mathrm{FO}}\varphi^*$.
%
%
\end{lemma}
\begin{proof}
%
%
%
%
%
%
The relation symbols $R\in\sigma$
are interpreted in $\mA^*$ such that $R^{\mA^*} := R^{\mA}$.
The interpretations of the relation symbols in $\sigma^*\setminus\sigma$
are given below.
Let $U$ is an team with the domain $\{x\}$
and for the model $\mA$.
Assume $U$ contains exactly one assignment.
Since $\mA,(\{\emptyset\},\emptyset)\models\psi$,
we have $\mA,(U,\emptyset)\models\varphi$ by Lemma \ref{emptysetlemma}.
We shall next recursively define a double team $(U_{\chi},V_{\chi})$
for each subformula $\chi\in\mathrm{SUB}_\varphi$
such that $\mA, (U_{\chi},V_{\chi})\models\chi$ holds.
We shall simultaneously define the interpretations 
of the symbols in $\sigma^*\setminus\sigma$,
thereby completing the definition of the model $\mA^*$.
First define $(U_{\varphi},V_{\varphi}) := (U,\emptyset)$.
Define $S_{\varphi}^{\mA^*} = \mathit{Rel}(U_{\varphi})$.
Also define $T_{\varphi}^{\mA^*} :=\emptyset$.
Now consider a formula $\chi\in\mathrm{SUB}_{\varphi}$,
and assume that we have defined $U_{\chi}$ and $V_{\chi}$
such that $\mA,(U_{\chi},V_{\chi})\models\chi$.
Assume first that $\chi\, =\, \exists^{\geq k} x\, \alpha$.
As $\mA,(U_{\chi},V_{\chi})\models\exists^{\geq k} x\, \alpha$,
there exist functions $f: U_{\chi}\rightarrow {\exists^{\geq k}}^{\empty\, \mA}$
and $g:V_{\chi}\rightarrow{\overline{\exists^{\geq k}}}^{\empty\, \mA}$ such that
$$\bigl(\, U_{\chi}[\, x/f\, ]\cup V_{\chi}[\, x/g\, ],\ U_{\chi}[\, x/f'\, ]
\cup V_{\chi}[\, x/g'\, ]\, \bigr)\models\, \alpha.$$
Furthermore, by Lemma \ref{downwardslemma},
we assume, w.l.o.g., that the functions $f$ and $g$ are $x$-independent.
We make the following definitions.
\begin{enumerate}
\item
$U_{\alpha}\, :=\, U_{\chi}[\, x/f\, ]\, \cup\, V_{\chi}[\, x/g\, ]\, =\,
U_{\chi}[\, x/f\, ]$
\item
$V_{\alpha}\, :=\, U_{\chi}[\, x/f'\, ]\, \cup\, V_{\chi}[\, x/g'\, ]\, =\,
V_{\chi}[\, x/g'\, ]$
\item
$S_{\alpha}^{\mA^*} := \mathit{Rel}\bigl(U_{\alpha}\bigr)$
\item
$T_{\alpha}^{\mA^*}  := \mathit{Rel}\bigl(V_{\alpha}\bigr)$
\item
${E_{\alpha}^{\mathit{Uf}}}^{\mA^*}  := \mathit{Rel}\bigl(U_{\chi}[x/f]\bigr) = S_{\alpha}^{\mA^*}$
%
%
\item
${E_{\alpha}^{\mathit{Vg'}}}^{\mA^*}  := \mathit{Rel}\bigl(V_{\chi}[x/g']\bigr) = T_{\alpha}^{\mA^*}$
\end{enumerate}
The cases where $\chi$ is a formula of any of the types
$\exists^{\geq k} y\, \alpha,\ \exists^{s} x\, \alpha,\ \exists^{s} y\, \alpha$,
are treated analogously. It is essential---as we shall see---that the
function $f$ is $x$-independent in the case $\chi = \exists^s x\, \alpha$,
and $y$-independent when $\chi = \exists^s y\, \alpha$.
Consider then the case where $\chi$ is $\alpha\vee\beta$.
Since $(U_{\chi},V_{\chi})\models \alpha\vee\beta$,
we have $(U_1,V_{\chi})\models \alpha$ and
$(U_2,V_{\chi})\models \beta$ for some $U_1,U_2\subseteq U_{\chi}$
such that $U_1\cup U_2 = U_{\chi}$.
We define $(U_{\alpha},V_{\alpha}) := (U_1, V_{\chi})$
and $(U_{\beta},V_{\beta}):= (U_2, V_{\chi})$.
We also define $S_{\alpha}^{\mA^*} := \mathit{Rel}(U_{\alpha})$,
$T_{\alpha}^{\mA^*} := \mathit{Rel}(V_{\alpha})$,
$S_{\beta}^{\mA^*} := \mathit{Rel}(U_{\beta})$,
and $T_{\beta}^{\mA^*} := \mathit{Rel}(V_{\beta})$.
In the case where $\chi$ is $\neg\alpha$,
we define $U_{\alpha} := V_{\chi}$ and $V_{\alpha} := U_{\chi}$.
We also define $S_{\alpha}^{\mA^*} := \mathit{Rel}(U_{\alpha})$
and $T_{\alpha}^{\mA^*} := \mathit{Rel}(V_{\alpha})$.
We have now defined the teams $U_{\chi}$ and $V_{\chi}$
for each $\chi\in\mathrm{SUB}_{\varphi}$ such
that we have $\mA,(U_{\chi},V_{\chi})\models\chi$.
We have also fully defined a $\sigma^*$-model $\mA^*$.
We shall next show that $\mA^*\mfo\varphi^*$.
While it is clear that $\mA^*\mfo\psi_{\mathit{initial}}$, we
must show that $\mA^*\models_{\mathrm{FO}} \psi_{\chi}$
for each $\chi\in\mathrm{SUB}_{\varphi}$.
Let us first consider the case where $\chi$ is
of the form $\exists^{s}y\, \alpha$ for some $\alpha\in\mathrm{SUB}_\varphi$.
This case divides into further subcases, depending on $\mathit{Dom}(\chi)$.
We assume first that $\mathit{Dom}(\chi) = \{x,y\}$.
%
%
%
We know that there exist $y$-independent functions $f:U_{\chi}\rightarrow {\exists^{s}}^{\empty\, \mA}$ and
$g:V_{\chi}\rightarrow \overline{\exists^{s}}^{\empty\, \mA}$
such that 
$$\mA,\bigl(\, U_{\chi}[\, y/f\, ]\cup V_{\chi}[\, y/g\, ],\ U_{\chi}[\, y/f\, '\, ] 
\cup V_{\chi}[\, y/g\, '\, ]\, \bigr)\models\, \alpha.$$
We\, have\, $\mathit{Rel}(U_{\chi}[\, y/f\, ]) = {E_{\alpha}^{\Uf}}^{\empty\, \mA^*}$,\, \,
$\mathit{Rel}(V_{\chi}[\, y/g\, ]) = \emptyset$,\, \, 
$\mathit{Rel}(U_{\chi}[\, y/f\,'\, ]) = \emptyset$\, \, 
and\, $\mathit{Rel}(V_{\chi}[\, y/g\,'\, ]) = {E_{\alpha}^{\Vgp}}^{\empty\, \mA^*}$.
We shall first show that $\mA^*\models_{\mathrm{FO}}\psi_{\chi}^1$.
Here it is essential that the function $f$ is $y$-independent.
Assume that $\mA^*,[x\mapsto a,y\mapsto b]\models_{\mathrm{FO}} S_{\chi}(x,y)$.
Thus $(a,b)\in S_{\chi}^{\mA^*} = \mathit{Rel}(U_{\chi})$.
%
%
Since $f$ is $y$-independent,
there exists exactly one element $b'\in A$ such
that $(a,b')\in \mathit{Rel}(U_{\chi}[ y/f]) = {E_{\alpha}^{\Uf}}^{\mA^*}$.
Therefore we have $\mA^*,[x\mapsto a]\models_{\mathrm{FO}} \exists^{=1}y\ {E_{\alpha}^{\Uf}}(x,y)$,
as required.
%

%
%
%
We have $\mA^*\models_{\mathrm{FO}}\psi_{\chi}^2$
since for every assignment $s\in U_{\chi}[\, y/f  ]$ such that
$s(x) = a$, there must exist an assignment $s'\in U_{\chi}$ such that $s'(x) = a$.
We can similarly show that $\mA^*\models_{\mathrm{FO}} \psi_{\chi}^3\wedge\psi_{\chi}^4$.
%
%
%

%
The fact that $\mA^*\models_{\mathrm{FO}} \psi_{\chi}^5\wedge\psi_{\chi}^6$
follows immediately since
$U_{\alpha} = U_{\chi}[ y/f ]\, \cup\, V_{\chi}[ y/g ]$
and $V_{\alpha} = U_{\chi}[\, y/f\, '\,  ]\, \cup\, V_{\chi}[\, y/g\, '\,  ]$.
The cases where $\mathit{Dom}(\chi)$ is $\{x\}$, $\{y\}$, or $\emptyset$, are similar,
as are the cases where $\chi := \exists^{s}x\, \alpha$.
Also all cases where $\chi := \exists^{\geq k} y\, \alpha$ or $\exists^{\geq k} x\, \alpha$ are similar;
we shall discuss the details of the case where $\chi := \exists^{\geq k} x\, \alpha$ and
$\mathit{Dom}(\chi) = \{x\}$.
We know that there exist functions $f:U_{\chi}\rightarrow{\exists^{\geq k}}^{\text{ }\mA}$
and $g:V_{\chi}\rightarrow{\overline{\exists^{\geq k}}}^{\text{ }\mA}$
such that
$$\mA,\bigl(\, U_{\chi}[\, x/f\, ]\cup V_{\chi}[\, x/g\, ],\ U_{\chi}[\, x/f\, '\, ] 
\cup V_{\chi}[\, x/g\, '\, ]\, \bigr)\models\, \alpha.$$
We\, have\, $\mathit{Rel}(U_{\chi}[\, x/f\, ]) = {E_{\alpha}^{\Uf}}^{\empty\, \mA^*}$,\, \,
$\mathit{Rel}(V_{\chi}[\, x/g\, ]) = \emptyset$,\, \, 
$\mathit{Rel}(U_{\chi}[\, x/f\,'\, ]) = \emptyset$\, \, 
and\, $\mathit{Rel}(V_{\chi}[\, x/g\,'\, ]) = {E_{\alpha}^{\Vgp}}^{\empty\, \mA^*}$.
Let us show that $\mA^*\models_{\mathrm{FO}}\psi_{\chi}^1$.
Assume that $\mA^*,[x\mapsto a]\models_{\mathrm{FO}} S_{\chi}(x)$ for some $a\in A$.
Thus $S_{\chi}^{\mA^*} = \mathit{Rel}(U_{\chi}) \not= \emptyset$,
whence $U_{\chi}\not=\emptyset$ . 
Therefore there exist at least $k$ elements $b\in A$ such
that $b\in \mathit{Rel}(U_{\chi}[ x/f ]) = {E_{\alpha}^{\Uf}}^{\mA^*}$.
Therefore $\mA^*\models_{\mathrm{FO}} \exists^{\geq k}x\, {E_{\alpha}^{\Uf}}(x)$, as required.
%

%
%
%
%
We have $\mA^*\models_{\mathrm{FO}}\psi_{\chi}^2$ since if $U_{\chi}[\, x/f\, ] \not= \emptyset$,
then $U_{\chi}\not=\emptyset$.
%
%
%
To show that $\mA^*\models_{\mathrm{FO}}\psi_{\chi}^3$,
assume that $\mA^*,[x\mapsto a]\models_{\mathrm{FO}} T_{\chi}(x)$ for some $a\in A$.
Thus $T_{\chi}^{\mA^*} = \mathit{Rel}(V_{\chi})$ is not empty. Let $s\in V_{\chi}$.
Recall that $g_2$ denotes the second coordinate function of $g$.
By the definition of the minor quantifier $\exists^{\geq k}$,
there are at most $k-1$ elements 
in the set $A \setminus g_2(s)$. Thus there are at most $k-1$
elements in $A\setminus\mathit{Rel}\bigl(V_{\chi}[ x/g\, '\, ]\bigr)$.
Therefore we have $\mA^*\models_{\mathrm{FO}} \neg \exists^{\geq k} x \neg {E_{\alpha}^{\Vgp}}(x)$,
and hence $\mA^*\models_{\mathrm{FO}}\psi_{\chi}^3$.
We have $\mA^*\models_{\mathrm{FO}} \psi_4$ since if $V_{\chi}[\, x/g\, '\,  ]$ is not empty,
then $V_{\chi}$ cannot be empty.
We have $\mA^*\models_{\mathrm{FO}}\psi_5\wedge\psi_6$
since $U_{\alpha} = U_{\chi}[ x/f ]$
and $V_{\alpha} = V_{\chi}[\, x/g\, '\,  ]$.
The cases where $\chi = \chi_1\vee\chi_2$ and $\chi = \neg\alpha$ are straightforward, so we
omit them and move directly to the cases where $\chi$ is an atomic formula.
Assume first that $\chi = R(y,x)$ for some relation symbol $R$.
We must show that
$\mA^*\models_{\mathrm{FO}} \forall x\forall y\bigl(S_{R(y,x)}(x,y)\rightarrow R(y,x)\bigr).$
(Notice indeed the order of all tuples of variables.)
Assume that
$\mA^*,[x\mapsto a, y\mapsto b]\models_{\mathrm{FO}} S_{R(y,x)}(x,y).$
By the definition of the relation $S_{R(y,x)}^{\mA^*}$, this means that
$(a,b)\in \mathit{Rel}(U_{R(y,x)})$.
We have $\mA,(U_{R(y,x)},V_{R(y,x)})\models_{\mathrm{FO}}R(y,x)$,
and therefore $\mA^*,s\models_{\mathrm{FO}} R(y,x)$
for all $s\in U_{R(y,x)}$.
Thus $\mA^*,[x\mapsto a,y\mapsto b]\models_{\mathrm{FO}}R(y,x)$.
All the remaining arguments for the cases where $\chi$ is an
atomic first-order formula, are similar.
Assume then that $\chi$ is the atom $\depst(x,y)$.
We must establish that we have 
$\mA^*\models_{\mathrm{FO}} \neg \exists x\exists^{\geq 2} y\ S_{=(x,y)}(x,y)$.
Assume $\mA^*,[x\mapsto a,y\mapsto b]\models_{\mathrm{FO}} S_{=(x,y)}(x,y)$
for some $a,b\in A$.
Therefore $(a,b)\in\mathit{Rel}(U_{=(x,y)})$.
We have
$\mA,\bigl(U_{=(x,y)},V_{=(x,y)}\bigr)\models\ \depst(x,y),$
and therefore
$s(y) = s'(y)$ for all $s,s'\in U_{=(x,y)}$ such that $s(x) = s'(x)$.
Hence there is no pair
$(a,b')\in\mathit{Rel}(U_{=(x,y)}) = S_{=(x,y)}^{\mA^*}$
such that $b\not= b'$.
Thus $\mA^*\models_{\mathrm{FO}} \neg \exists x\exists^{\geq 2} y\, S_{=(x,y)}(x,y)$,
as required. 
All remaining arguments concerning non-first-order atoms are similar.
\end{proof}
\begin{lemma}
Let $\mB^*$ be a $\sigma^*$-model such that $\mB^*\mfo \varphi^*$.
Let $B$ be the domain of\, $\mB^*$.
Then there exists a $\sigma$-model $\mB$ with the same domain $B$ such
that $\mB,\bigl(\{\emptyset\},\emptyset\, \bigr)\models\varphi$.
%
%
\end{lemma}
\begin{proof}
Assume that $\mB^*\mfo \varphi^*$.
%
%
Let $\mB$  be the reduct of $\mB^*$ to the
vocabulary $\sigma$, i.e.,
the domain of $\mB$ is $B$, and
each relation symbol $R\in\sigma$ is interpreted
such that $R^{\mB} := R^{\mB^*}$.
%

%
%
We shall next define a
double team $(U_{\chi},V_{\chi})$ for each $\chi\in\mathrm{SUB}_{\varphi}$.
We shall then establish that $\mB,\bigl(U_{\chi},V_{\chi}\big)\models\chi$ for
each $\chi\in\mathrm{SUB}_{\varphi}$.
If $\mathit{Dom}(\chi)$ is any of the sets $\{x\}, \{y\}, \{x,y\}$,
we let $U_{\chi}$ and $V_{\chi}$ be the teams with the domain $\mathit{Dom}(\chi)$
and codomain $B$ such that $\mathit{Rel}(U_{\chi}) = S_{\chi}^{\mB^*}$
and $\mathit{Rel}(V_{\chi}) = T_{\chi}^{\mB^*}$.
If $\mathit{Dom}(\chi)$ is $\emptyset$,
we let $U_{\chi}$ and $V_{\chi}$ be the teams with the domain $\{x\}$
and codomain $B$ such that $\mathit{Rel}(U_{\chi}) = S_{\chi}^{\mB^*}$
and $\mathit{Rel}(V_{\chi}) = T_{\chi}^{\mB^*}$.
We shall prove by induction on the structure of $\varphi$
that $\mB,\bigl(U_{\chi},V_{\chi}\bigr)\models\chi$
for each $\chi\in\mathrm{SUB}_{\varphi}$.
We shall then establish that $\mB,\bigl(\{\emptyset\},\emptyset\, \bigr)\models\varphi$.
Assume first that $\chi$ is the atomic formula $R(y,x)$.
Let $s\in U_{R(y,x)}$ be an assignment.
Thus $\mB^*,s\models_{\mathrm{FO}} S_{R(y,x)}(x,y)$.
Since $\mB^*\mfo \psi_{R(y,x)}$, 
we have $\mB^*,s\models_{\mathrm{FO}} R(y,x)$.
We show similarly that if $t\in V_{R(y,x)}$, then $\mB^*,t\not\models R(y,x)$.
Therefore $\mB,\bigl(U_{R(y,x)},V_{R(y,x)}\bigr)\models R(y,x)$.
The corresponding argument for other first-order atoms is similar. 
Let $\chi$ be the atom $\depst(x,y)$.
Since $\mB^*\models_{\mathrm{FO}}\psi_{\chi}$,
there exist no pairs $(a,b),(a,b')\in S_{\chi}^{\mB^*}$ such that
$b\not=b'$. Furthermore, $T_{\alpha}^{\mB^*} = \emptyset$. 
Therefore $\mB,(U_{\chi},V_{\chi})\models\chi$.
The corresponding arguments for other
non-first-order atoms of $\mathrm{DC}^2$ are similar.
For the sake of induction, 
let $\chi\, :=\, \exists^{\geq k} y\, \alpha$ be a subformula of $\varphi$,
and assume that $\mB,(U_{\alpha},V_{\alpha})\models\alpha$.
We need to show that $\mB,(U_{\chi},V_{\chi})\models\chi$.
Let us consider the case where $\mathit{Dom}(\chi) = \{x,y\}$.
We define a function $f:U_{\chi}\rightarrow{\exists^{\geq k}}^{\empty\, \mB}$
as follows. Assume $s\in U_{\chi}$ is an 
assignment such that $s(x) = a$ and $s(y) = b$
for some $a,b\in B$. Thus $(a,b)\in S_{\chi}^{\mB^*}$.
Since $\mB^*\models_{\mathrm{FO}}\psi_{\chi}^1$, the set
\begin{equation}\label{uniqueset}
B_s\ :=\ \{\, c\in A\ |\ \mB^*,[\, x\mapsto a,\, y\mapsto c\, ]
\models E_{\alpha}^{\Uf}(x,y)\ \}
\end{equation}
has at least $k$ elements.
%
%
Define $f:U_{\chi}\rightarrow{\exists^{\geq k}}^{\empty\, \mB}$ such that
$f(s) := (B_s,\emptyset)$ for each $s\in U_{\chi}$.
Thus $\mathit{Rel}( U_{\chi}[\, y/f\, ]) \subseteq {E_{\alpha}^{\Uf}}^{\mB^*}$.
%
%
%

%
Let us then similarly define a function $g:V_{\chi}\rightarrow{\overline{\exists^{\geq k}}}^{\empty\, \mB}$.
Let $s\in V_{\chi}$ be an
assignment such that $s(x) = a$ and $s(y) = b$ for some $a,b\in B$.
Thus $(a,b)\in T_{\chi}^{\mB^*}$.
Since $\mB^*\models_{\mathrm{FO}}\psi_{\chi}^3$, the number of elements in the set
\begin{equation}\label{another}
C_s\ :=\ \{\, c\in A\ |\ \mB^*,[\, x\mapsto a,\, y\mapsto c\, ]\models E_{\chi}^{\Vg'}(x,y)\ \}
\end{equation}
satisfies the condition $|B\setminus C_{s}| < k$.
Define $g:V_{\chi}\rightarrow\overline{\exists^{\geq k}}^{{}\, \mB}$
such that $g(s) := (\emptyset,C_s)$ for each $s\in V_{\chi}$.
Thus $\mathit{Rel}( V_{\chi}[\, y/g\, '\, ])
\subseteq {E_{\alpha}^{\Vgp}}^{\empty\, \mB^*}$.
As $U[\, y/f\, '\, ] = V[\, y/g\, ] = \emptyset$, we now know that
\begin{equation}\label{first}
\mathit{Rel}\bigl(\, U_{\chi}[\, y/f\, ]\, )\, \cup\,
\mathit{Rel}(\, V_{\chi}[\, y/g\, ]\, \bigr)\
\subseteq\ {E_{\alpha}^{\Uf}}^{\mB^*} 
\end{equation}
and
\begin{equation}\label{second}
\mathit{Rel}\bigl(\, U_{\chi}[\, y/f\, ' \, ]\, )\, \cup\,
\mathit{Rel}(\, V_{\chi}[\, y/g\, ' \, ]\, \bigr)\
\subseteq\ {E_{\alpha}^{\Vgp}}^{\mB^*}.
\end{equation}
We then show that also the converse inclusion of Equation \ref{first} holds.
Assume that $(a,c)\in{E_{\alpha}^{\Uf}}^{\mB^*}$.
As $\mB^*\models_{\mathrm{FO}}\psi_{\chi}^2$, there exists some $b\in B$
such that $(a,b)\in\mathit{Rel}(U_{\chi})$.
Let $s\in U_{\chi}$ be the assignment such that $s(x) = a$ and $s(y) = b$.
Now, by the definition of $f$ (see Equation \ref{uniqueset}),
we observe that since $(a,c)\in{E_{\alpha}^{\Uf}}^{\mB^*}$,
we have $c\in f_1(s)$;
recall here that $f_1$ denotes the first coordinate function of $f$.
Thus $(a,c)\in\mathit{Rel}(U_{\chi}[\, y/f\, ])$.
Therefore the converse inclusion of Equation \ref{first} holds.
We then establish that also the converse inclusion of Equation \ref{second} holds.
Assume that $(a,c)\in{E_{\alpha}^{\Vgp}}^{\mB^*}$.
As $\mB^*\models_{\mathrm{FO}}\psi_{\chi}^4$, there exists some $b\in B$
such that 
%
%
$(a,b)\in\mathit{Rel}(V_{\chi})$.
Let $s\in V_{\chi}$ be the assignment such that $s(x) = a$ and $s(y) = b$.
By the definition of the function $g$ (Equation \ref{another}),
we  observe that $c\in g_2(s)$. Thus $(a,c)\in\mathit{Rel}(V_{\chi}[\, y/g\, ' \, ])$.
Hence the converse inclusion of Equation \ref{second} holds.
As $\mB^*\models_{\mathrm{FO}}\psi_{\chi}^5\wedge\psi_{\chi}^6$,
we conclude that $U_{\chi}[\, y/f\, ]\cup V_{\chi}[\, y/g\, ] = U_{\alpha}$
and $V_{\chi}[\, y/f\, ' \,  ]\cup V_{\chi}[\, y/g\, ' \, ] = V_{\alpha}$.
As $\mB,(U_{\alpha},V_{\alpha})\models \alpha$, we
therefore conclude that $\mB, (U_{\chi},V_{\chi})\models\chi$.
%

%
%

%
The remaining cases where $\chi = \exists^{\geq k} y\, \alpha$ or $\,\chi = \exists^{\geq k} x\, \alpha$,
are similar. We next deal with the strict
existential quantifier $\exists^s$.
Let $\chi\, :=\, \exists^{s} x\, \alpha$, and
assume $\mB,(U_{\alpha},V_{\alpha})\models\alpha$.
Let us consider the details of case where $\mathit{Dom}(\chi) = \{y\}$.
We define a function $f:U_{\chi}\rightarrow{\exists^{s}}^{\empty\, \mB}$
as follows. Assume $s\in U_{\chi}$ is an 
assignment such that $s(y) = a$ for some $a\in B$.
Thus $a\in S_{\chi}^{\mB^*}$.
Since $\mB^*\models_{\mathrm{FO}}\psi_{\chi}^1$, the size of the set
%
%
%
%
\begin{equation}\label{there}
B_s\ :=\ \{\, c\in A\ |\ \mB^*,[\, x\mapsto c,\, y\mapsto a\, ]
\models E_{\alpha}^{\Uf}(x,y)\, \}
\end{equation}
is exactly one.
Define $f:U_{\chi}\rightarrow{\exists^{s}}^{\empty\, \mB}$ such that
$f(s) := (B_s,\emptyset)$ for each $s\in U_{\chi}$.
Thus $\mathit{Rel}( U_{\chi}[\, x/f\, ]) \subseteq {E_{\alpha}^{\Uf}}^{\mB^*}$.
We also of course have $\mathit{Rel}( U_{\chi}[\, x/f\, '\, ]) = \emptyset$.
%
%
%

%
Let us then define the function $g:V_{\chi}\rightarrow{\overline{\exists^{s}}}^{\empty\, \mB}$
such that $g(s) = (\emptyset,B)$ for each $s\in V_{\chi}$.
%
%
%
Assume $s\in V_{\chi}$ is an assignment such
that $s(y) = a$. Thus $a\in T_{\chi}^{\mB^*}$.
%
%
Since $\mB^*\models_{\mathrm{FO}}\psi_{\chi}^3$, we have $(c,a) \in {E_{\alpha}^{\Vgp}}^{\empty\, \mB^*}$
for each $c\in A$. 
Thus $\mathit{Rel}( V_{\chi}[\, x/g\, '\, ])
\subseteq {E_{\alpha}^{\Vgp}}^{\empty\, \mB^*}$.
As $\mathit{Rel}( V_{\chi}[\, x/g\, ])$
and $\mathit{Rel}( V_{\chi}[\, x/f\, ' \, ])$ are empty,
we have 
\begin{equation}\label{third}
\mathit{Rel}\bigl(\, U_{\chi}[\, x/f\, ]\, )\, \cup\,
\mathit{Rel}(\, V_{\chi}[\, x/g\, ]\, \bigr)\
\subseteq\ {E_{\alpha}^{\Uf}}^{\mB^*}
\end{equation}
and
\begin{equation}\label{fourth}
\mathit{Rel}\bigl(\, U_{\chi}[\, x/f\, ' \, ]\, )\, \cup\,
\mathit{Rel}(\, V_{\chi}[\, x/g\, ' \, ]\, \bigr)\
\subseteq\ {E_{\alpha}^{\Vgp}}^{\mB^*}.
\end{equation}
We then show that the converse inclusion of Equation \ref{third} holds.
Assume that $(a,b)\in{E_{\alpha}^{\Uf}}^{\mB^*}$.
As $\mB^*\models_{\mathrm{FO}}\psi_{\chi}^2$, we have $b\in\mathit{Rel}(U_{\chi})$.
Let $s\in U_{\chi}$ be the assignment such that $s(y) = b$.
Now, by the definition of $f$ (see Equation \ref{there}),
since $(a,b)\in {E_{\alpha}^{\Uf}}^{\mB^*}$,
we have $(a,b)\in\mathit{Rel}\bigl(U[\, x/f\, ]\bigr)$.
Therefore the converse inclusion of Equation \ref{third} holds.
It is easy to establish that also the converse inclusion of Equation \ref{fourth} holds.
Therefore, as $\mB^*\models_{\mathrm{FO}} \psi_{\chi}^5\wedge\psi_{\chi}^6$,
we infer that $U_{\chi}[\, y/f\, ]\cup V_{\chi}[\, y/g\, ] = U_{\alpha}$
and $V_{\chi}[\, y/f\, ' \,  ]\cup V_{\chi}[\, y/g\, ' \, ] = V_{\alpha}$.
As $\mB,(U_{\alpha},V_{\alpha})\models \alpha$, we
therefore conclude that $\mB, (U_{\chi},V_{\chi})\models\chi$.
We have now discussed the cases where $\chi = \exists^{\geq k} z\, \alpha$
or $\chi = \exists^{s} z\, \alpha$; here $z\in\{x,y\}$.
The arguments for the cases where $\chi = \alpha\vee\beta$
or $\chi = \neg\alpha$, are straightforward.
We conclude that $\mB,(U_{\varphi},V_{\varphi})\models\varphi$.
Since $\mB^*\models_{\mathrm{FO}}\psi_{\mathit{initial}}$, we have 
$\mathit{Rel}(U_{\varphi}) = S_{\varphi}^{\mB^*} = \{b\}$
for some $b\in B$ and $\mathit{Rel}(V_{\varphi}) = T_{\varphi}^{\mB^*} =\emptyset$.
Hence $\mB,\bigl(\{\emptyset\},\emptyset\, \bigr)\models\varphi$
by Lemma \ref{emptysetlemma}.
\end{proof}
\begin{theorem}
The satisfiability and finite satisfiability problems of\, $\mathrm{DC}^2$
are complete for $\mathrm{NEXPTIME}$.
\end{theorem}
\begin{proof}
The satisfiability and finite satisfiability problems of  $\mathrm{DC}^2$ are
in $\mathrm{NEXPTIME}$ due to the translation
from $\mathrm{DC}^2$ into $\mathrm{FOC}^2$ defined above;
it is shown in \cite{pratthartmann} that the satisfiability
and finite satisfiability problems for $\mathrm{FOC}^2$ are
$\mathrm{NEXPTIME}$-complete.
Furthermore, the satisfiability and finite satisfiability
problems for $\mathrm{DC}^2$ are 
$\mathrm{NEXPTIME}$-hard, since $\mathrm{DC}^2$ contains
$\mathrm{FOC}^2$.
%
%
%
\end{proof}
\section{A semantics for single teams}\label{singleteam}
In this section we define a semantics for variants of
dependence logic with generalized quantifiers based on single teams.
We also simplify the notion of a generalized atom in a trivial way so that it works
naturally in this context.
Let us first define the following semantics with two
semantic turnstiles $\models^+$ and $\models^-$  instead of one.
%
%

%
\[
\begin{array}{lll}
 \mathfrak{A},U\models^+ y_1=y_2 & \ \Leftrightarrow \ & 
          \forall s\in U\bigl(\mathfrak{A},s\models_{\mathrm{FO}} y_1=y_2\bigr).\\
 \mathfrak{A},U\models^- y_1=y_2 & \ \Leftrightarrow \ & 
          \forall s\in U\bigl(\mathfrak{A},s\models_{\mathrm{FO}} y_1\not=y_2\bigr).\\
%
%
 \mathfrak{A},U\models^+ R(y_1,...,y_m)& \ \Leftrightarrow \ & 
          \forall s\in U\bigl(\mathfrak{A},s\models_{\mathrm{FO}} R(y_1,...,y_m)\bigr).\\
%
%
 \mathfrak{A},U\models^- R(y_1,...,y_m)& \ \Leftrightarrow \ & 
          \forall s\in U\bigl(\mathfrak{A},s\not\models_{\mathrm{FO}} R(y_1,...,y_m)\bigr).\\
 \mathfrak{A},U\models^+ \neg\varphi & \ \Leftrightarrow \ & 
 \mathfrak{A},U\models^- \varphi.\\ 
 \mathfrak{A},U\models^- \neg\varphi & \ \Leftrightarrow \ & 
 \mathfrak{A},U\models^+ \varphi.\\ 
   \mathfrak{A},U\models^+(\varphi\vee\psi) & \ \Leftrightarrow \ &
%
%
%
           \mathfrak{A},U_1\models^+\varphi\text{ and }
          \mathfrak{A},U_2\models^+\psi\text{ for}\\
         & &\text{some }U_1,U_2\subseteq U\text{ such that }
          U_1\cup U_2 = U.\\
   \mathfrak{A},U\models^-(\varphi\vee\psi) & \ \Leftrightarrow \ &
%
%
%
           \mathfrak{A},U\models^-\varphi\text{ and }
          \mathfrak{A},U\models^-\psi.\\
\end{array}
\]
For a generalized quantifier $Q$ of the type $(i_1,...,i_n)$, we define
$$\mathfrak{A},U\models^+ Q\overline{x}_1,...,\overline{x}_n(\varphi_1,...,\varphi_n)$$
if and only if there exists a function $f:U\rightarrow  Q^{\mathfrak{A}}$
such that 
\[
\begin{array}{c}
 \mathfrak{A},U[\, \overline{x}_1/f_1]\models^+\varphi\text{ and }
\, U[\, \overline{x}_1/{f_1}'\, ]\models^-\varphi_1,\\
 \vdots\\
 \mathfrak{A},U[\, \overline{x}_n/f_n]\models^+\varphi\text{ and }
\, U[\, \overline{x}_n/{f_n}'\, ]\models^-\varphi_n.\\
\end{array}
\]
We also define
$$\mathfrak{A},U\models^-Q\overline{x}_1,...,\overline{x}_n(\varphi_1,...,\varphi_n)$$
if and only if there exists a function $g:U\rightarrow  \overline{Q}^{\mathfrak{A}}$
such that 
\[
\begin{array}{c}
 \mathfrak{A},U[\, \overline{x}_1/g_1]\models^+\varphi\text{ and }
\, U[\, \overline{x}_1/{g_1}'\, ]\models^-\varphi_1,\\
 \vdots\\
 \mathfrak{A},U[\, \overline{x}_n/g_n]\models^+\varphi\text{ and }
\, U[\, \overline{x}_n/{g_n}'\, ]\models^-\varphi_n.\\
\end{array}
\]
It is straightforward to establish the following proposition.
\begin{proposition}\label{first-ordercorres}
Let $\varphi$ be a formula of first-order logic, possibly extended with generalized
quantifiers. Let $U$ be a team. Then the equivalences 
$\mathfrak{A},U\models^+\varphi\ \Leftrightarrow\
\forall s\in U(\mathfrak{A},s\models_{\mathrm{FO}}\varphi)$
and $\mathfrak{A},U\models^-\varphi\ \Leftrightarrow\
\forall s\in U(\mathfrak{A},s\not\models_{\mathrm{FO}}\varphi)$ hold.
\end{proposition}
For a minor quantifier $M$, we define
$\mathfrak{A},U^+\models M x\, \varphi$
if and only if there exists a function $f:U\rightarrow M^{\mathfrak{A}}$
such that 
\[
\begin{array}{c}
 \mathfrak{A},U[\, x/f]\models^+\varphi\text{ and }
\, \mA,U[\, x/f'\, ]\models^-\varphi.\\
%
%
%
%
\end{array}
\]
We also define
$\mathfrak{A},U\models^-M x\, \varphi$
if and only if there exists a function $g:U\rightarrow \overline{M}^{\mathfrak{A}}$
such that 
\[
\begin{array}{c}
 \mathfrak{A},U[\, x/g]\models^+\varphi\text{ and }
\, \mA,U[\, x/g'\, ]\models^-\varphi.\\
%
%
%
%
\end{array}
\]
If $Q$ is a generalized quantifier and $M\leq Q$ its minor, we can
replace $Q$ by $M$ or vice versa, without affecting the satisfaction of
formulae. Note, however, that this interchangeability does not  generally hold
if we add generalized atoms into the picture.
Indeed, we can naturally extend the single team framework with a suitable notion of a
generalized atom.
Let $(Q,P)$ be a pair of generalized
quantifiers, each of the type $(i_1,...,i_k)$.
Consider syntactic atomic expressions 
of the type $A(\overline{y}_1,...,\overline{y}_k)$, where
each $\overline{y}_j$ is of the length $i_j$.
We define
$$\mA,U\models^+A(\overline{y}_1,...,\overline{y}_k)\
\Leftrightarrow\ \bigl(\mathit{Rel}(U,\mA,\overline{y}_1),...,\mathit{Rel}(U,\mA,\overline{y}_k)\bigr)
\in Q^{\mA}$$
and
$$\mA,U\models^-A(\overline{y}_1,...,\overline{y}_k)\
\Leftrightarrow\ \bigl(\mathit{Rel}(U,\mA,\overline{y}_1),...,\mathit{Rel}(U,\mA,\overline{y}_k)\bigr)
\in P^{\mA}.$$
Of course the functions $f$ and $g$ need to respect the repetitions of
the tuples $\overline{y}_i$.
We do not claim that the single team semantics is
somekind of a \emph{counterpart} of the double team semantics.
There are interesting subtleties related to
differences between the single team semantics and the double
team semantics.
%
%
For example, let $B$ denote the atom of the type $(1;1)$
such that $\mA,(U,V)\models B(x)$ iff both
$\mathit{Rel}(U,\mA,x) = A$ and $\mathit{Rel}(V,\mA,x) = A$, where $A$ is the domain of $\mA$.
Let $B^*$ denote the atom for single team semantics such that 
$\mA,U\models^+ B^*(x)$ iff $\mathit{Rel}(U,\mA,x) = A$ and
$\mA,U\models^- B^*(x)$ iff $\mathit{Rel}(U,\mA,x) = A$.
Let $\exists^t$ denote the minor quantifier $M_{\exists}$.
Let $\mB$ be a model whose domain contains two elements.
Now $\mB,\bigl(\{\emptyset\},\emptyset\, \bigr)\models \exists^t x\, \exists^t x \, B(x)$,
while $\mB,\{\emptyset\}\not\models^+\, \exists^t x\, \exists^t x \, B^*(x)$ and
$\mB,\emptyset\not\models^-\, \exists^t\, x\exists^t x\, B^*(x)$.
It is not difficult to devise a corresponding symmetric game-theoretic semantics
for single teams, but we shall not do this in the current article for the sake of brevity.
The uniformity condition here seems to be---in a subtle way---quite
different from the uniformity condition of the game semantics corresponding to the
double team semantics.
But as said, we shall not attempt to provide an
account of the game corresponding to the single team semantics in this article.
\section{Reflections on general perspectives}
In this section we briefly discuss the interpretation
of team semantics by considering a rather general approach
to related technical issues.
The investigations are based on the use 
of a semantics that resembles
Scott-Montague semantics, as
suggested in \cite{kuusisto3}.
The findings may perhaps elucidate
issues related to team semantics and double
team semantics, and provide insight
into the differences of the two approaches.
The investigations are also of interest
independently of team semantics.
Let $\tau$ be a vocabulary.
Consider structures of the type $$(\mA,S_1,...,S_n),$$
where $\mA$ is a $\tau$-structure with the domain $A$,
and for each $j\in\{1,...,n\}$,
$S_j \subseteq A^{i_j}$ is a relation of the arity $i_j$.
Thus the relations $S_1,...,S_n$ have the arities $i_1,...,i_n$, respectively.
Note that the relations $S_j$ are not 
part of $\mA$, but are instead extra relations.
Define an \emph{operation} of the type
$(\tau,i_1,...,i_{n+1})$ 
to be a class function $F$ (too large to be a set)
that maps any structure $(\mA,S_1,...,S_n)$ of the appropriate type to a
relation $R\subseteq A^{i_{n+1}}$
of the arity $i_{(n+1)}$.
The operator $F$ satisfies the constraint that if $\mA$ and $\mB$
are $\tau$-models with the domains $A$ and $B$,
respectively, and if $f:A\rightarrow B$ is an isomorphism from $(\mA,S_1,...,S_n)$ to 
$(\mB,S_1',...,S_n')$, then $f(F(\, (\mA,S_1,...,S_n)\, )\, ) =
F(\, (\mB,S_1',...,S_n')\, )$.\footnote{Obviously the isomorphism
takes into account relations in $\tau$ as well as the external relations.
Also note that if $S\subseteq A^k$, then $f(S) = \{\ (f(a_1),...,f(a_k))\ |\ (a_1,...,a_k)\in S\ \}$.}
A very important class of operators is the class where
$\tau$ is the empty signature. In the elaborations below,
it may help to always first consider the special case of
such operators.
Fix a possibly infinite index set $I$.
A logic that deals with the above operations can be based on a grammar of the type
$$\varphi\ ::=\ P_i\ |\ \langle F\rangle(\varphi_1,...,\varphi_n),$$
where $P_i$ is a relation symbol such that $i\in I$.
The relation symbols $P_i$ may have different arities.
Of course we can have more than one operator $F$
in the logic; for example, if $G$ and $H$ are operators of the
types $(\tau,i_1',...,i_{m+1}')$ and
$(\tau,i_1'',...,i_{k+1}'')$, respectively,
then we can define a logic given by the grammar
$$\varphi\ ::=\ P_i\ |\ \langle F\rangle(\varphi_1,...,\varphi_n)\
|\ \langle G\rangle(\varphi_1,...,\varphi_m)\ |\ \langle H\rangle(\varphi_1,...,\varphi_k).$$
Note that the signature $\tau$ in the type of each operator is the same.
Each formula is associated with an arity. The arity
of an atomic formula $P_i$ is the arity of the relation symbol $P_i$.
If $H$ is an operator of the type $(\tau,i_1'',...,i_{k+1}'')$,
then the arity of $\langle H\rangle(\varphi_1,...,\varphi_k)$
is $i_{(k+1)}''$. Let $\mathit{Ar}(\varphi)$ denote the arity of $\varphi$.
Importantly, in the grammars above, we need the extra condition that
for each $j$, the arity $\mathit{Ar}(\varphi_j)$ of the
formula $\varphi_j$ in the formula
$\langle F\rangle(\varphi_1,...,\varphi_n)$ 
is $i_j$; recall that $F$ is an operator of the type $(\tau,i_1,...,i_{n+1})$.
A similar convention obviously concerns the operators $G$ and $H$ as well. 
The semantics of the logic is defined with respect to
pointed models $\bigl((\mA,\{P_i\}_{i\in I}\bigr),\overline{a})$,
where $\mA$ is a $\tau$-model with the domain $A$,
the objects $P_i\subseteq A^{\mathit{Ar}(P_i)}$ are relations,
and $\overline{a}$ is a tuple of elements of $A$.
Call $M := (\mA,\{P_i\}_{i\in I}\bigr)$.
Let $\overline{a} \in A^{\mathit{Ar}(P_i)}$ and 
$\overline{b} \in A^{\mathit{Ar}\bigl(\langle F \rangle(\varphi_1,...,\varphi_n)\bigr)}$.
The semantics of atomic formulae asserts
that $(M,\overline{a})\models P_i$ iff $\overline{a}\in P_i$.
The semantics of compound formulae
asserts that $(M,\overline{b})\models \langle F \rangle(\varphi_1,...,\varphi_n)$
iff we have
$$\overline{b}\in F\bigl(\, \mA,||\varphi_1||^M,...,||\varphi_n||^M\, \bigr),$$
where $||\varphi_i||^M = \{\ \overline{v}\in A^{\mathit{Ar(\varphi_i)}}
\ |\ (M,\overline{v})\models \varphi_i\ \}$.
This system bears some resemblance to the Scott-Montague semantics of modal logic.
This approach to logic is very general. To see why, consider
operators of the type $(\emptyset,1,...,1)$.
The related logic is interpreted by pointed models of the type $((W,\{P_i\}_{i\in I}),w)$,
where $W$ is a nonempty set, $w\in W$ and $P_i\subseteq W$ for each $i$.
Call $M := (W,\{P_i\}_{i\in I})$.
We may consider $W$ to be an abstract set
of atomic semantic objects, and the subset $||\varphi||^M$ of $W$ is the semantic
value (or meaning) of the formula $\varphi$ in the model $M$.
Importantly, there is great freedom in the choice of operators $F$ considered.
However, each operator $F$ is \emph{compositional} in the sense that the
semantic value $||\langle F \rangle(\varphi_1,...,\varphi_n)||^M$
of the formula $\langle F \rangle(\varphi_1,...,\varphi_n)$ is
functionally determined by $F$ from the semantic values
$||\varphi_1||^M,...,||\varphi_n||^M$ of 
the formulae $\varphi_1,...,\varphi_n$.
Our framework provides a general approach
to compositional operators.
Let us next consider operators 
of the type $(\tau,1,...,1)$, where
$\tau$ is no more necessarily the empty signature.
We have formulae of the type $\langle F\rangle(\varphi_1,...,\varphi_n)$.
Can we consider the semantics of our logic from the point of
view of Kripke semantics? We can indeed.
Let $\mM$ be a $\tau$-model with the domain $W$.
Call $M := \bigl(\mM,\{P_i\}_{i\in I}\bigr)$.
Our semantics dictates that $(M,w)\models \langle F \rangle(\varphi_1,...,\varphi_n)$
iff 
$$w\in F\bigl(\, \mM,||\varphi_1||^M,...,||\varphi_n||^M\, \bigr).$$
Let $X \subseteq W$.
Define
\begin{enumerate}
\item
$M,X\models P_i$ iff $X =  P_i$.
\item
$M,X\models \langle F \rangle(\varphi_1,...,\varphi_n)$
if and only if there exist sets $Y_1,...,Y_n\subseteq W$
such that $X = F(\mM,Y_1,...,Y_n)$ and $M,Y_i\models\varphi_i$ for each $i$.\footnote{Notice
that here this condition means simply that $M,X\models \langle F \rangle(\varphi_1,...,\varphi_n)$
if and only if $X = F(\mM,||\varphi_1||^M,...,||\varphi_n||^M)$.}
\end{enumerate} 
We call this the \emph{canonical lift} of the logic to the level of teams.
We can now (consistently)
redefine the satisfaction of $\langle F \rangle(\varphi_1,...,\varphi_n)$
such that $(M,w)\models \langle F \rangle(\varphi_1,...,\varphi_n)$
iff there exists some set $X\subseteq W$ such that $w\in X$
and $M,X\models \langle F \rangle(\varphi_1,...,\varphi_n)$.
Let $\mathcal{W}$ be the power set of $W$.
Define the $(n+1)$-ary relation $\mathcal{R}\subseteq\mathcal{W}^{(n+1)}$
such that $(X,Y_1,...,Y_n)\in\mathcal{R}$ iff $X = F( (\mM,Y_1,...,Y_n) )$.
Now define the model $\mathcal{M} = (\mathcal{W},\mathcal{R},\{\mathcal{S}_i\}_{i\in I})$,
where each $\mathcal{S}_i$ is
the set $\{\ P_i\ \}\subseteq\mathcal{W}$.
The model $\mathcal{M}$ is (essentially) a Kripke model
with the $(n+1)$-ary accessibility relation $\mathcal{R}$ and
proposition symbols $\mathcal{S}_i$. Consider pointed
models $(\mathcal{M},X)$, where $X\in\mathcal{W}$.
Following standard Kripke semantics of (polyadic) modal logic,
we make the following definition.
\begin{enumerate}
\item
$(\mathcal{M},X)\models \mathcal{S}_i$ iff $X_i\in\mathcal{S}_i$.
\item
$(\mathcal{M},X)\models \langle\mathcal{R}\rangle(\varphi_1,...,\varphi_n)$
iff there exist $Y_1,...,Y_n\in\mathcal{W}$ such that
$(X,Y_1,...,Y_n)\in\mathcal{R}$ and $(\mathcal{M},Y_i)\models\varphi_i$
for each $i$.
\end{enumerate}
Define a translation such that $P_i^* = \mathcal{S}_i$ 
and $$(\langle F \rangle(\varphi_1,...,\varphi_n))^*
= \langle\mathcal{R}\rangle(\varphi_1^*,...,\varphi_n^*).$$
We have $M,X\models P_i$
iff $(\mathcal{M},X)\models\mathcal{S}_i$
and $M,X\models \langle F \rangle(\varphi_1,...,\varphi_n)$
iff $(\mathcal{M},X)\models\langle\mathcal{R}\rangle(\varphi_1^*,...,\varphi_n^*)$.
Every operator $F$ gives rise to an accessibility relation $\mathcal{R}$ in the
canonical way defined above, and each unary predicate $P_i$
gives rise to the related unary predicate $\mathcal{S}_i$.
This way we \emph{lift} the general compositional
semantics to the realm of Kripke semantics.
This way typical compositional frameworks can be viewed
from the point of view of Kripke semantics.\footnote{Notice
that various logical equivalence-related similarity relations can be nicely lifted
to suitable bisimulations. Note also that related logics can easily be
associated with natural recursive capacities (for example) in
the way done in \cite{kuusistocsl2013}. Now, the Kripke structures obtained in
this way may not contain all Kripke structures. But, nevertheless, it is intereting
that if all structures were in the class, genuine use of the recursive capacities
(non-locality) would necessarily
lead to indeterminacy of truth
values somewhere, as shown in \cite{kuusistodistributed}.
What then, suffices to guarantee determinacy? This is an intriguing question.}
For the sake of an example concerning the canonical lift,
let $F$ be defined such that $F(\mM,S,T) = S\, \cup\, T$, i.e.,
$F$ is the disjunction. Let $X\subseteq W$.
Then $M,X\models \langle F\rangle(\varphi_1,\varphi_2)$
iff there exist sets $Y_1,Y_2\subseteq W$ such
that $X = Y_1\cup Y_2$
and we have $M,Y_1\models\varphi_1$ and $M,Y_2\models\varphi_2$. 
This is the truth defintion for the disjunction in modal dependence logic.
For the sake of another example, let $G$ be defined such that $G(\mM,S) = W\setminus S$, i.e.,
$G$ is the negation. Let $X\subseteq W$.
Then $M,X\models \langle G\rangle\varphi$
iff there exists a set $Y\subseteq W$ such
that $X = W\setminus Y$,
and we have $M,Y\models\varphi$.
%

%
%

%
Now consider a logic with only monotone operations $F$, i.e., 
if we have $X_1\subseteq Y_1,\, X_2\subseteq Y_2,\, ..., X_k\subseteq Y_k$,
then $F(\mM,X_1,...,X_k)\subseteq F(\mM,Y_1,...,Y_k)$.
We make the following definition.
\begin{enumerate}
\item
$M,X\models P_i$ iff $X \subseteq  P_i$.
\item
$M,X\models \langle F \rangle(\varphi_1,...,\varphi_n)$
if and only if there exist sets $Y_1,...,Y_n\subseteq W$
such that $X \subseteq F(\mM,Y_1,...,Y_n)$ and $M,Y_i\models\varphi_i$ for each $i$.
%
%
%
%
%
\end{enumerate} 
We call this the \emph{monotone canonical lift} to the level of teams.
We can again (consistently)
redefine the satisfaction of $\langle F \rangle(\varphi_1,...,\varphi_n)$
such that $(M,w)\models \langle F \rangle(\varphi_1,...,\varphi_n)$
iff there exists some set $X\subseteq W$ such that $w\in X$
and $M,X\models \langle F \rangle(\varphi_1,...,\varphi_n)$.
(The consistency is easy to show by first noticing that 
the sets $Z$ such that $M,Z\models\chi$,
satisfy $Z\subseteq ||\chi||^M$.)
As above, we shall interpret this semantics in the
style of Kripke. Let $\mathcal{W}$ be the power set of $W$.
Define the $(n+1)$-ary relation $\mathcal{R}\subseteq\mathcal{W}^{(n+1)}$
such that $(X,Y_1,...,Y_n)\in\mathcal{R}$ iff $X \subseteq F( (\mM,Y_1,...,Y_n) )$.
Define the model $\mathcal{M} = (\mathcal{W},\mathcal{R},\{\mathcal{S}_i\}_{i\in I})$,
where each $\mathcal{S}_i$ is
this time the set $\{\ S\ |\ S\subseteq P_i\ \}\subseteq \mathcal{W}$.
Following Kripke semantics,
define $(\mathcal{M},X)\models\mathcal{S}_i$ iff $X\in\mathcal{S}_i$,
and also define $(\mathcal{M},X)\models\langle\mathcal{R}\rangle(\varphi_1,...,\varphi_n)$ if
there exists $Y_1,...,Y_n\in\mathcal{W}$ such that 
$(X,Y_1,...Y_n)\in\mathcal{R}$ and $(\mathcal{M},Y_i)\models\varphi_i$ for each $i$.
We have $M,X\models P_i$
iff $(\mathcal{M},X)\models\mathcal{S}_i$
and $M,X\models \langle F \rangle(\varphi_1,...,\varphi_n)$
iff $(\mathcal{M},X)\models\langle\mathcal{R}\rangle(\varphi_1^*,...,\varphi_n^*)$.
Thus we have again lifted the semantics from the general compositional 
treatment to a Kripke-style treatment.
Intuitively, ordinary Kripke-style treatment (of whatever) involves
\emph{searching for witnesses} in order to satisfy a
diamond formula. Much of the fundamentality of the
framework stems from this. The related function based
treatment on the power set level enables an
algebraic approach to the underlying Kripke-style approach.
And of course the power set level treatement can again
be turned into a treatment that resembles the style of Kripke by scanning the function
on subsets backwards (the new accessibility relation), and regarding subsets as points.
Such approaches are interesting even if the power set operator does not arise
from an ordinary accessibility relation, but is an arbitrary function on subsets.
Above, a  natural intuition behind the team level satisfaction of
formulae in the setting \emph{without} the assumption monotonicity
is that the team is \emph{exactly} the set of points that satisfy
the formula. With the monotonicity assumption, a natural
intuition is that a team satisfies a formula if each member of
the team does. In a sense the double team
semantics relates to \emph{both} of these intuitions.
It is of course interesting to add further generalized operators
to the setting we have defined. For example,
as in \cite{kuusisto3},
we can consider operators $\mathcal{F}$
that map any tuple $(\mM,T_1,...,T_n)$,
to a set $S\subseteq \mathcal{W}$;
here $T_i\subseteq\mathcal{W} = \mathcal{P}(W)$ for each $i$.
Of course if $f:W\rightarrow U$
is an isomorphism from $(\mM,T_1,...,T_n)$
to $(\mathfrak{N},S_1,...,S_n)$, then
$f( \mathcal{F}(\mM,T_1,...,T_n) ) = \mathcal{F}(\mathfrak{N},S_1,...,S_n)$;
here
$$f( \mathcal{F}(\mM,T_1,...,T_n) ) = \{\ f(S)\ |\ S\in\mathcal{F}(\mM,T_1,...,T_n)\ \},$$
where $f(S) = \{ \ f(s)\ |\ s\in S\ \}$. 
We can also let $\mathcal{F}$ be nullary.
Then $\mathcal{F}$ simply maps $\mM$ to a subset of $\mathcal{W}$
(and is obviously invariant under isomorphisms).
Consider formulae of the type
$\langle \mathcal{F}\rangle(\varphi_1,...,\varphi_n)$.
Define the semantics such that 
$M,X\models \langle \mathcal{F}\rangle(\varphi_1,...,\varphi_n)$
if and only if $X\in\mathcal{F}(\mM,|||\varphi_1|||^M,...,|||\varphi_n|||^M)$,
where $|||\varphi_i|||^{\mathcal{M}} = \{\ X\subseteq W\ |\ M,X\models\varphi_i\, \}$.
If $\mathcal{F}$ is nullary,
then $M,X\models \langle \mathcal{F}\rangle$
iff $X\in\mathcal{F}(\mM)$.
We can now define the global disjunction $\mathcal{V}$
such that $M,X\models\langle\mathcal{V}\rangle(\varphi,\psi)$
iff $X\in |||\varphi|||^M\cup\, |||\psi|||^M$,
and the global negation $\mathcal{N}$ such that 
$M,X\models\langle\mathcal{N}\rangle\varphi$
iff $X \in \mathcal{W}\setminus |||\varphi|||^M$
(recall that $\mathcal{W} = \mathcal{P}(W)$, and
$W$ is the domain of $\mM$ and $M$).
Of course we can also add higher order propositions
$\mathcal{Q}\subseteq\mathcal{W}$ to $M$,
if we wish, and then obviously $M,X\models\mathcal{Q}$
iff $X \in \mathcal{Q}$. We did not consider generalized operators
of this level in the principal sections of this article, mainly for 
the sake of simplicity, but also because we
wanted to consider systems where one
reasons \emph{with} teams rather
than \emph{about} teams.
A very large class of operators $F$ satisfies
the requirements of the framework we have discussed above, and the
approach is general indeed. Let us consider an example in
the spirit of cylindric set algebras.
Recall that the set of all variable symbols is $\mathrm{VAR} = \{\, v_i\ |\ i\in\mathbb{Z}_+\ \}$.
Let $\mA$ be a first-order model
whose vocabulary $\tau$ consists of relation symbols.
Let $A^{\omega}$ denote the set of all $\omega$-sequences
of elements of $A$; $\omega$ is of course the smallest infinite ordinal.
Let $R\in\tau$ be a $k$-ary relation symbol.\footnote{The equality
symbol can be treated as if it was a relation symbol.}
Let $(v_{i_1},...,v_{i_k})$ be a tuple of variable symbols.
Define $P_{R(v_{i_1},...,v_{i_k})}\subseteq A^{\omega}$
to be the relation $T\subseteq A^{\omega}$ such
that $\overline{a}\in A$ is in $T$ if and only if the following conditions hold.
\begin{enumerate}
\item
There exists an assignment $s$
such that $\mA,s\models_{\mathrm{FO}}R(v_{i_1},...,v_{i_k})$.
\item
We have $a(i) = s(v_i)$ for each $i\in \{i_1,...,i_k\}$.
\end{enumerate}
Let $\mathcal{A}$ be the set of all atomic first-order formulae
of the vocabulary $\tau$.
Define the model $M_{\mathfrak{A}} = 
\bigl(\mA,\{P_{\varphi}\}_{\varphi\in\mathcal{A}}\bigr)$.
Define an operator $F_{\exists v_i}$
of the type $(\tau,\omega,\omega)$ for each variable $v_i$ as follows.
Let $\mB$ be a $\tau$-model.
Let $B = \mathit{Dom}(\mB)$ and $S\subseteq B^{\omega}$.
For an $\omega$-sequence $s\in B^{\omega}$, $i\in\mathbb{N}$ and $b\in B$,
let $s[i\mapsto b]$ denote the $\omega$-sequence $t\in B^{\omega}$
such that $t(i) = b$ and $t(j) = s(j)$ for each $j\in\mathbb{N}\setminus\{i\}$.
Define
$$F_{\exists v_i}(\, (\mB,S)\, ) = \{\ s\in B^{\omega}\ \ |\
\ s[ i\mapsto b ]\in S\text{ for some }b\in B\ \ \}.$$
Define also the operators $F_{\neg}$ and $F_{\vee}$ such
that $F_{\neg}(\, (\mathfrak{B},S)\, ) = B^{\omega}\setminus S$
and $F_{\vee}(\, (\mathfrak{B},S,T)\, ) = S\cup T$.
Translate from first-order logic into modal logic as follows.
\begin{enumerate}
\item
$T(R(x_1,...,x_k) )= P_{R(x_1,...,x_k)}$,
\item
$T(\exists v_i\varphi )= \langle F_{\exists v_i}\rangle T(\varphi)$,
\item
$T(\neg \varphi )= \langle F_{\neg}\rangle T(\varphi)$,
\item
$T(\varphi\vee\psi)= \langle F_{\vee}\rangle( T(\varphi),T(\psi))$.
\end{enumerate}
Let $s$ be an assignment that maps to $A$, and let
$t\in A^{\omega}$. We say that
$t$ encodes $s$ if
for all variables $v_i$ in the domain of $s$,
we have $s(v_i) = t(i)$.
Let $s$ be an assignment and $t\in A^{\omega}$ a
sequence that encodes $s$.
Now of course $$\mathfrak{A},s\models_{\mathrm{FO}}\psi
\ \ \Leftrightarrow\ \ (\mA,\{P_{\varphi}\}_{\varphi\in\mathcal{A}},t)\models T(\psi).$$
We can of course perform lifts and all that for the obtained system.
Double team semantics is compositional, and thus we can of course
similarly modalize it also, if we wish.
For the sake of one more example,
let $W$ be a nonempy set 
and $R\subseteq W\times W$ a binary relation.
Let $P_i\subseteq W$, where $i\in I$,
be unary relations.
Consider the standard Kripke diamond
operation of the type $(\{R\},1,1)$ defined such that
$$F_{\Diamond}\bigl(\ (W,R),S \ \bigr)
= \{\ w\in W\ |\ \exists u\in W\text{ s.t. } w R u\text{ and }
u\in S\ \}.$$
Let $M = ((W,R),\{P_i\}_{i\in I})$.
%
%
%
%
%
%
The monotone lift dictates that $M,X\models \langle F_{\Diamond}\rangle  \varphi$
iff there is some $Y\subseteq W$ such that $X \subseteq F_{\Diamond}((W,R),Y)$
and $M,Y\models\varphi$. 
A whole new range of possibilities arise from considering generalized
operators that modify the underlying models.
Let $\mathcal{C}$  denote the class of isomorphism classes 
of ordinary pointed Kripke models. Let $F$ be a function (too large to be a set)
from $\mathcal{C}$ to $\mathcal{P}(\mathcal{C})$.
We may define $(M,w)\models(F)\varphi$
iff there exists a model $(N,v)\in \bigcup F([(M,w)])$ such that
$(N,v)\models\varphi$. Here $[(M,w)]$ is of course
the isomorphism class of $(M,w)$, and $\bigcup F([(M,w)])$
is the union of the classes in $F([(M,w)])$.
Similar operators can of course be defined for predicate logic.
These kinds of \emph{generalized modifiers} can
be annoyingly strong from the set theoretic perspective.
A rather tame such an operator, i.e., a modifier,
is employed in \cite{kuusistoooo}
in order to obtain a Turing complete logic $\mathcal{L}$ (see \cite{kuusistoooo}
for the syntax and semantics).
A possible reading of an $\mathcal{L}$-formula $\varphi$ 
states that \emph{it is possible to verify $\varphi$.}
Formula $\neg\varphi$ can be considered to 
state that \emph{it is possible to falsify $\varphi$},
or even that \emph{it is possible to disprove $\varphi$}.
Note that negation here is a strong negation; indeed,
since $\mathcal{L}$ captures recursive
enumerability, $\neg$ \emph{cannot be} the
contradictory negation.
We can define recursive readings $r$
of formulae of $\mathcal{L}$ as follows.
For first-order atoms, we let
\emph{$r(\varphi) := ``\varphi$}."
For atom $k$, where $k$ is a
natural number, we let $r(k) := $ \emph{``$k$."}
For $\neg$, we let $r(\neg\varphi) :=$ \emph{``it is 
\emph{falsifiable} that $r(\varphi)$"} or
\emph{``it is \emph{refutable} that $r(\varphi)$."}
For the conjunction, we
let $r(\varphi\wedge\psi)$ := ``\emph{$r(\varphi)$ and $r(\psi)$}."
For $\exists x$, 
we define that $r(\exists x\varphi) := $\emph{``there exists
an $x$ such that $r(\varphi)$."}
For $\mathrm{I} x$, 
we let $r(\mathrm{I} x\varphi) := $\emph{``it is possible to insert a
fresh $x$ such that $r(\varphi)$."}
Operators concerning the insertion and deletion
of tuples of relations can be given a similar reading.
For $k\varphi$, we let $r(k\varphi) :=$ \emph{``\emph{this} 
statement---call it $k$---holds and asserts that $r(\varphi)$,"}
or even \emph{``\emph{this} 
statement $k$ holds and asserts that $r(\varphi)$."} 
(Note that indeed the novel variables $k$ refer
to \emph{formulae} and can can be associated
with the word ``\emph{this}" (\emph{this} formula...).
Now, importantly, we read $\mathfrak{A},f\models^+\varphi$
as ``it is verifiable in $\mathfrak{A},f$ that $r(\varphi)$"
rather than ``it is true in $\mathfrak{A},f$ that $r(\varphi)$." 
Similarly, $\mathfrak{A},f\models^-\varphi$ is read 
as ``it is falsifiable in $\mathfrak{A},f$ that $r(\varphi)$."
On the interpretational level, verifiability of verifiability means verifiability,
and falsifiability of falsifiability means verifiability.
This is dictated by the formal semantics of $\mathcal{L}$. (So
the picture is in a sense more symmetric than with intuitionistic  logic,
and strongly corresponds with Turing computation,
with verification being halting in an accepting state and 
falsifiability halting in a rejecting state; indeterminacy corresponds to
divergence. The logic gives a unified perspective on logic and
computation, using games (game-theoretic semantics).
If considered a \emph{canonical logic}, it shows that the logical 
operators missing from first-order logic (in order to
be Turing-complete and thus capture systematicity) are
recursion and quantification of new points and tuples.)

The reading of the liar's sentence $1\neg 1$ (which is a
formula of $\mathcal{L}$) is
now (in a simplified form according to the reading $r$) ``this statement is
falsifiable," where falsifiability means falsifiability or refutability or even
disprovability (in a model) in the sense of $\mathcal{L}$.
The truth value of $1\neg 1$ in $\mathcal{L}$ is
\emph{indeterminate}. The
statement ``this statement is
falsifiable" is (in the sense of $\mathcal{L}$ and
even in some informal senses)
unproblematic (not paradoxical).
Even informally the sentence ``this
statement is falsifiable" can be considered unproblematic because one
cannot now deduce a contradiction in exactly
the same way as in the case of the classical
reading of the liar's sentence.
Call $\varphi := $ ``\emph{this statement is falsifiable}."
If we assume $\varphi$ is \emph{true}, we
obtain a contradiction, and then if we conclude
that therefore $\varphi$ is \emph{false}, we
\emph{cannot} infer that thus $\varphi$ is true.
We can simply infer that ``it is 
false that this sentence called $\varphi$ is falsifiable."
It does not (have to) imply that $\varphi$ is true.
All this is consistent with $\varphi$ being indeterminate.
So the usual problem does not arise.
Bivalence leads to problems with the liar's paradox.
Consider the rather classical
readings $r'$ for $\mathcal{L}$ such that $r'(\neg\varphi) :=$ ``\emph{not} $r'(\varphi)$,"
$r'(k\varphi) := $ ``\emph{this statement,
call it $k$, holds and asserts that $r'(\varphi)$}," 
and $r'(k) := $ "\emph{$k$ holds}."
With this reading, $1\neg 1$ gives the liar's paradox;
now $\neg$ is ``not" as opposed to asserting falsifiability.
With the reading $r$, the paradox does not really arise,
and the formal semantics of $\mathcal{L}$ dictates that the formula is simply 
indeterminate, as we discussed above. Thus the paradox is resolved
with the falsifiability reading
due to refusing to adopt the bivalent perspective and talking
about verification/falsifiaction rather than simply truth/falsity.
If the truth value of condition $k$ depends solely
on ``the truth value of condition $k$" 
(with $k$ uninterpreted), we do not have to
adopt a reductionist perspective that we can dig the
truth value of the statement $k$ from some foundational 
fully determined atomic layer of bivalent facts.
We have not \emph{defined} $k$,
and undefined statements are \emph{ontologically} indeterminate.
If the meaning of dsfsd is defined simply to
be the meaning of dsfsd, then we can refuse
that it has a conventional meaning that can be
understood directly or in some sufficiently clear reductionist fashion.
We have not defined what it is, and we could define it in any way we wish.
Thus it has no meaning.
Note that $\mathcal{L}$ is still bivalent on fragments of $\mathcal{L}$
where formulae are determinate, such as first-order logic.
Being Turing-complete, $\mathcal{L}$ \emph{captures}
the notion of \emph{systematicity} (which we here
\emph{equate} with the capacity of Turing machines,
partly because we lack any better definition for systematicity).
Arguably, \emph{systematicity} is exactly what \emph{logic} is,
meaning that the word (or the notion of)
logic means exactly systematicity.
But this position of course can be debated.
%

%
%
So, $\mathcal{L}$ is not bivalent, just like Turing machines 
are not.  The logic $\mathcal{L}$
takes seriously the \emph{perspective} that bivalence
breaks (or can naturally be considered to break) in the presence of indeterminacy.
But does the lack of bivalence stem from 
ontic or epistemic indeterminacy? One can  argue 
that $1\neg 1$ is indeterminate in some ontological sense,
whereas a formula of $\mathcal{L}$ that takes as inputs
that represent Turing machines (plus input) and tries
to classify whether the input halts, is indeterminate in an epistemic sense.
(It is easy to write a formula of $\mathcal{L}$ 
which attempts to classify Turing machines according
to halting as follows. The formula approximates the
assertion that the input to the formula (a model
encoding a Turing machine and an input to it) halts.
The formula is sound and complete for halting inputs,
always being verifiable on halting inputs, 
but the evaluation of the formula soundly stops  in a rejecting state only on
\emph{some} of the diverging inputs.
Thus the formula misses some false inputs
and the game goes on without an end there.
The formula is epistemically indeterminate on those models,
at least with respect to the desired meaning of the formula
that would fully classify halting and diverging (which is impossible). But of
course it is debatable whether the desired meaning of the
formula is the true meaning of the formula. We could dictate that it
is not and that the formula is
always indeterminate in an ontic way when it is indeterminate formally.)
Reductionist approaches are fine when we can
always reach bivalent atoms. But who is to say 
that I am forced to admit that either $\exists x(x\in x)$ is
true or that $\exists x(x\in x)$ is false? (Forget about ZFC here.)
Falsity here (in the above sentence) does not refer to the contradictory negation
of truth. As we discussed above, genuine indeterminacy about \emph{definitions}
concerning partially determined notions seems to 
appear here. Let $w$ and $w'$ be two possible worlds such
that $\exists x(x\in x)$ is true in $w$ and
false in $w'$. Then my model $\{w,w'\}$
does not satisfy $\exists x(x\in x)$. Whether
my model satisfies $\neg \exists x(x\in x)$,
depends on my reading of $\neg$. Indeed,
it is neither unnatural nor uncommon to read $\neg$ such that
$\neg\exists x(x\in x)$ means that
$\exists x(x\in x)$ is determinately false,
i.e., that $\exists x(x\in x)$ is
false in both $w$ and $w'$.\footnote{Such readings of $\neg$
seem to occur in contexts where natural language is used.
On the other hand, in informal situations, the possibility of
indeterminacy is very rarely assumed. The \emph{assumption of determinacy}
indeed anyway seems to lead to many kinds of seemingly paradoxical situations.}
Then my model does not satisfy $\neg \exists x(x\in x)$,
and thus there is a truth value gap.
\emph{Strong negation} is indeed a 
natural operator, which has natural
uses in contexts involving indeterminacy.
And indeterminacy itself can appear rather natural.
As already mentioned, we cannot decide the truth value
of condition $k$ if the truth value of condition depends solely on
``the truth value of $k$."
This happens in $k\neg k$ and $k k$. 
The negation in $\mathcal{L}$ is one
kind of a strong negation, and, indeed, it
arises naturally in $\mathcal{L}$ largely due to 
indeterminacy. Furthermore, as already mentioned,
negation in $\mathcal{L}$ cannot be the
contradictory negation.
A further direction that $\mathcal{L}$ could and should be 
developed concerns games. It would be interesting to
extend $\mathcal{L}$ by quantifiers $Q_p x$ for 
$p\in \mathcal{P}$, where $\mathcal{P}$ is a set
of players. This  would result in a multiplayer version of
$\mathcal{L}$, which currently has only the two players $\exists$
and $\forall$. (Indeed, Abramsky has suggested such quantifiers.)
Additionally, the operators that modify the model could also be associated
with more players. Obviously  the looping constructs would be kept in
the system. Each player would be associated with quantifiers that add
and delete domain points and tuples of relations.
Arguably, the system could be considered to
be a framework that (at least in some
reasonable sense) captures \emph{all} perfect information games,
or defines the notion of a game. Turing completeness would be part of
the argument claiming that in some sense \emph{all} games are captured.
Indeed, the system would be all about a group of agents
\emph{modifying a relational structure} together. Quantifiers $Q_ax$ would
simply color individual nodes. Similarly,  quantifiers that add tuples to,
say, a unary relation, would color subsets of the domain.
Other operators would add (delete) domain points and tuples
of relations of higher arities.
A framework
where relational structures are modified,
possibly infinitely long, is a rather general framework.
Our system would (and will) indeed be a very general and
flexible framework for modeling interaction.
Additionally, the system will nicely build on the logic $\mathcal{L}$, being, after all, a
rather direct generalization of $\mathcal{L}$.
The logic $\mathcal{L}$ unifies logic and computation using games;
the extension will unify, in a sense, logic, computation  and games.
To model concurrency, it could be interesting to allow for
simultaneous moves by the players, with conditions dictating what happens with
clashing move attempts. Also imperfect information could be added in one
way or another. 
Generalized modifiers facilitate the definition of the
rather natural and intriguing Turing complete logic $\mathcal{L}$, and surely
they also offer rather interesting and intriguing perspectives on logic.
Further possiblities concerning such operators
should be investigated. For example theories of arithmetic
that talk about classes of finite models, as opposed to
talking about the single model $(\mathbb{N},+,\cdot)$ (possibly  together
with its non-standard variants), would be interesting in this context.
In the spirit of graph theory, one would talk about, e.g., finite models
that represent initial segments of arithmetic. Also finite models encoding finite sets, of course,
would be interesting here.
Let us finish up by considering ordinary modal dependence logic
and its variants. A natural generalized version of the
modal dependence atom $=\hspace{-1pt}(p_1,...,p_k,q)$
is defined as follows. Let $(k_1,...,k_l)$ be a nonempty sequence of
positive integers. (We consistently ignore
the possibility of considering operators without explicit input objects.)
Let $Q$ be a generalized quantifier of the type $(1,...,1)$,
where $1$ is repeated $1+\Sigma_{i\in\{1,...,l\}}k_i$ times.
Consider a formula of the type $A_Q(\overline{p}_1,...,\overline{p}_l)$,
where $\overline{p}_i$ is a sequence of $k_i$ proposition symbols.
Define $M,X\models A_Q(\overline{p}_1,...,\overline{p}_l)$
iff $(W,X,||{\overline{p}_1}_1||^M,...,||{\overline{p}_l}_{k_l}||^M)\in Q$.
Here $W$ is the domain of $M$ and $X\subseteq W$ a team.
Obviously ${\overline{p}_i}_j$ denotes the $j$-th proposition
symbol of proposition symbol sequence $\overline{p}_i$,
and $||{\overline{p}_i}_j||^M\subseteq W$ is the set of possible worlds
where ${\overline{p}_i}_j$ is true in the classical sense.
We could define for example an idependence operation $I(p,q)$
such that $I(p,q)$ holds in a team iff 
the following holds: if $p$ and $q$ get some
truth values in a world $w$ in the team, then there is a world in the team where $p$
gets the same value as in $w$ and $q$ the opposite value as in $w$. And so on.
Ordinary dependence logic currently calls for
further investigation concerning interpretation.
$M,X\models p\vee q$ can be interpreted to state that
the statement $p\vee q$ holds in every possible world in the team $X$.
Also the formula $=\hspace{-4pt}(p,q)$
has a natural interpretation in a team.
%
%
%
%
%
%
But what does $=\hspace{-4pt}(p,q)\, \vee=\hspace{-4pt}(p,q)$
exactly mean? Indeed, it seems that putting \emph{together} dependence
atoms and the splitjunction is problematic. The connective $\vee$ is very intuitive 
in restriction to plain propositional logic, and dependence atoms are intuitive on their
on, but the combination of these is somewhat puzzling.
The formula $=\hspace{-4pt}(p,q)\, \vee=\hspace{-4pt}(p,q)$ is, indeed, a
validity, while its direct translation into natural language (with $\vee$ translated
to the word ``or") seems not to be. With 
$=\hspace{-4pt}(p,q)$ and 
$=\hspace{-4pt}(p,q)\, \vee=\hspace{-4pt}(p,q)$ not being equivalent,
$\varphi\vee\psi$ seems to be in general best translated into a statement that 
the possible situations split or divide  into cases such that in the first scenario
we have $\varphi$ and in the other scenario $\psi$.\footnote{
Ordinary dependence logic and IF logic also have a similar feature. 
The sentences $\forall x\forall y=\hspace{-4pt}(x,y)$
and $\forall x\forall y(=\hspace{-4pt}(x,y)\vee =\hspace{-4pt}(x,y))$
are not equivalent, and in a model
with  two elements, one satisfying $P$ and the other one not,
the formula $\forall x\bigl(\exists y/x (Px\leftrightarrow Py)
\vee \exists y/x (Px\leftrightarrow Py)\bigr)$ is true, while
the formula $\forall x\exists y/x (Px\leftrightarrow Py)$ is not.
From the point of  view of natural language, this  can be puzzling, at least if
$\vee$ is taken to translate into ``or."}
Also, the standard
interpretation of $\neg\hspace{-3pt}=\hspace{-4pt}(p,q)$ is
somewhat odd, being true iff the interpreting team is empty.
Thus it is possible to  construct sensible models and teams
where for example the formula $\neg\hspace{-3pt}=\hspace{-4pt}(P\not=\mathrm{NP},\
\text{It is raining})$ is \emph{not} true. Therefore, it is easy to
see that formulae cannot be directly translated into
natural language
(with $\vee$ translated to ``or" and $\neg$ to ``not", 
and with the atomic proposition symbols being suitably tame)
such that all the resulting natural language statements 
can be given an interpretation that exactly corresponds to
the formal semantics of the untranslated formulae.
Note that of course already in first-order logic, the  symbol $\vee$
is the inclusive disjunction, and thus it is very easy to see that already
the translations of first-order sentences to natural language can
be immediately claimed ambiguous. However,
in the case of first-order logic, the translations
have \emph{some sensible reading} that exactly corresponds to
the  formal semantics of first-order logic.

%
%
Let us consider an alternative approach to dependence
(in modal and propositional contexts) altogether. This approach has the  property
that the natural language translations (with $\vee$ and $\neg$
translated to ``or" and ``not," respectively) have \emph{some sensible reading} that 
corresponds to the formal semantics (as long as the interpretations of
the proposition symbols are
suitably tame).
Let us extend the syntax of ordinary propositional (or modal) logic by the formula 
construction rule $=\hspace{-4pt}(\varphi_1,...,\varphi_k,\psi)$.
Note that here we allow for the arbitrary nesting of the
dependence operator $=$. Let us interpret the logic
using ordinary Kripke models.
Let us define
that $M,w\models\ =\hspace{-4pt}(\varphi_1,...,\varphi_k,\psi)$
iff the set $X := \{\ w'\in W\ |\ wRw'\ \}$ of successors of $w$ satisfies the condition
$$\forall u,v\in X\bigl(\ \forall i(M,u\models \varphi_i\Leftrightarrow M,v\models \varphi_i)\ \Rightarrow\
(M,u\models \psi\Leftrightarrow M,v\models \psi)\ \bigr).$$
Here we simply relativise the old dependence condition
to the set of successors of $w$.
In the similiar spirit, $\Box\varphi$ holds at $w$ iff all
successors of $w$ satisfy $p$. Now $=\hspace{-4pt}(\varphi_1,...,\varphi_k,\psi)$
holds at $w$ if the set of all successors of $w$ satisfies the dependence condition.
The interpretation of this semantics is similar to the interpretation of $\Box$.
The set of \emph{possible worlds}, or situations, or whatever,
must satisfy something. Under this interpretation, dependence is
interpreted with respect to the same \emph{sets of worlds} as necessity and possibility.
Of course a different accessibility relation could be used, if desired, but in
several contexts it is natural to argue that the very same accessiblity
relation is appropriate.
Anyway, a \emph{set} of possible worlds is used for the interpretation,
as in ordinary modal dependence logic, but this time the set involved
is quite explicitly associated with the set of possible alternative situations.
Other operators can of course be
treated similarly. Examples of natural operators
include for example operators corresponding to independence
declarations, and for another example, operators asserting that most successors
(in the finite and in general) satisfy $p$, and thus capturing an approach to 
the notion of likelyhood.
The approach resembles Kripke semantics and talks
about possible worlds (or sets of possible worlds: it is 
natural to define, just like in Kripke semantics,
that $M\models\varphi$ iff $M,w\models\varphi$ for 
all $w$ in the domain of $M$).
Trivially, if we have $\Diamond$ and $=\hspace{-1mm}( )$ in the
language together with the Booleans, we (typically) would like
the formulae $=\hspace{-1mm}(\varphi,\psi)$
and $\neg\bigl(\Diamond(\varphi\wedge\psi)
\wedge\Diamond(\varphi\wedge\neg\psi)\bigr)
\wedge \neg\bigl(\Diamond(\neg\varphi\wedge\psi)
\wedge\Diamond(\neg\varphi\wedge\neg\psi)\bigr)$ to be equivalent.
(Modal dependence logic does not respect this.)
This new approach to modal logic with dependence declarations is
interpreted with pointed models $M,w$; models $M,X$,
where $X$ is a team, are not needed. The Boolean connectives
have their \emph{usual} meaning (no splitjunctions).
Just like ordinary modal logic, this framework is \emph{realist} in spirit (as opposed
to antirealist); the evaluation point $w$ is considered to be the \emph{actual world}.
(For example the formula $p\wedge\Box\neg p$ \emph{can} be interpreted to
mean that $p$ holds while it is \emph{conceived necessary} that $\neg p$.)
Interestingly, it seems rather natural to
interpret $=\hspace{-4pt}(p,q)\vee =\hspace{-4pt}(p,q)$
under this new semantics; in team semantics the formula
has a somewhat less natural meaning.
Recall that $M\models\chi$ iff  $M,w\models\chi$ for every $w$ in
the domain of $M$. Let $W$ be the domain of $M$.
The conditions $M\models\  =\hspace{-1.5mm}(p,q) \vee =\hspace{-1.5mm}(p,q)$
and $M\models \neg =\hspace{-1.5mm}(r,s)$ give a sensible interpretation to
the natural language statements corresponding to
$=\hspace{-1.5mm}(p,q) \vee =\hspace{-1.5mm}(p,q)$
and $\neg =\hspace{-1.5mm}(r,s)$. The interpretation via team
semantics (i.e., $\vee$ being the splitjunction and $\neg =\hspace{-1.5mm}(r,s)$ being
true only in the empty team, and the team under investigation
being the domain of $M$) is not natural.

Now consider the formula
$p\rightarrow\ =\hspace{-1.5mm}(r,s)$.
%
%
Let us consider the corresponding natural language assertions
``\emph{If }$p$ \emph{ then} $r$ \emph{determines whether} $s$."
For example, let $p$ assert that ``\emph{the road
is free of cops}, " $r$ that ``\emph{John takes the motorcycle}, "
and $s$ that ``\emph{John will arrive on time}. "
Consider a suitable Kripke model $M$ with the domain
containg a set of
possible worlds $w_1,...,w_5$ as follows.
\begin{enumerate}
\item
$M,w_1\models p\wedge r\wedge s$.
\item
$M,w_2\models p\wedge \neg r\wedge \neg s$.
\item
$M,w_1\models \neg p\wedge r\wedge s$.
\item
$M,w_1\models \neg p\wedge r\wedge\neg s$.
%
%
\item
$M,w_1\models \neg p\wedge \neg r\wedge \neg s$.
\end{enumerate}
Consider the accessibility relation $R$ to be the total binary relation
(universal relation); this is for many purposes the most natural choice.
Now, it is not the case that $M\models p\rightarrow\ =\hspace{-1.5mm}(r,s)$.
However, the interpretation of $p\rightarrow\ =\hspace{-1.5mm}(r,s)$
via Kripke semantics is still \emph{sensible}; it states that the formula is
valid in a model if  in every possible world where $p$ \emph{actually holds}, it is
\emph{conceived by the observer} that $r$ determines $s$.
However, a more natural reading of $p\rightarrow\ =\hspace{-1.5mm}(r,s)$
would be given if the implication was interpreted to be
the operator $\rightarrow>$ defined such that 
$M,w\models \varphi\rightarrow> \psi$
iff $M',w'\models\psi$ for all $w'\in\mathit{domain}(M')$, where $M'$ is
the submodel of $M$ containing exactly 
the worlds $w''$ of $M$ such that $M,w''\models\varphi$.
(We concentrate here primarily on the case where the accessibility relation is
always the total relation on the domain of the model, and thus not really even needed.
We let natural language statements $\chi$ correspond to $M\models\chi$
rather than $M,w\models\chi$.)
The operator $\rightarrow >$ gives nice interpretations
for some examples with nested implications
and a diamond, for example to 
($p\rightarrow > (q\rightarrow > \neg \Diamond r)$)
where the standard
strict implication ($\Box(p\rightarrow \Box(q\rightarrow \neg \Diamond r))$) can be
weird (even when we use the total accessibility relation).
(For example, if $x$ is greater than 3, then,
if $x$ is a prime, it is not possible that $x$ is even.)
The strict implication (with total accessibility) gives a bad reading.
To give a simpler example, let $s$ state that $x$ is odd 
(and let $p$ state that $x$ is greter than 3 and $q$ that $x$ is a prime number).
The implication chain ($p\rightarrow > (q\rightarrow > s)$) 
is more natural than the one with 
the strict implication ($\Box(p\rightarrow \Box(q\rightarrow s)$).
The intended model here of course corresponds to the collection of all possible
assignments of values to $x$ in $\mathbb{N}$.
(Other examples where standard
Kripke semantics with the total accessibility relation can
be considered unintuitive (with respect to some interpretations of $\Diamond$)
involve for example the possible scenario where $\Diamond(p
\wedge \Diamond \neg p)$ is valid in every point of a Kripke model.)
The implication $\varphi \rightarrow >\psi$ is similar to $\neg\varphi\vee\psi$ in
team semantics (modal dependence logic); now we let $\varphi$ be a standard
propositional logic formula. But with a lax $\vee$ one gets too
many possible worlds to the left hand side team; for example,
when evaluating $\neg p\vee\psi$,
we can split $V$ to $U$ and $U'$ such that $U\models\neg p$
and $U'\models\psi$, with $U'$ still containing some worlds that do not
satisfy $p$. Thus $\neg p$ is still true in some possible worlds in $U'$.
It is easy to construct examples where this does not go
well with the intuition of the statement that ``if $p$, then $\psi$" (use
modal statements in $\psi$, or dependence/independence statements, etc.).
But of course $\rightarrow >$ as such is probably sometimes rather weird as well.
Finally, yet one more alternative for $\rightarrow >$ could be 
defined, now in team semantics, such that $T\models (\varphi\rightarrow\psi)$
iff $S\models\psi$, where $S = \{ u\in T\ |\ \{u\}\models\varphi\}$.
Here $T$, $S$ and $\{u\}$ are teams. This connective is
faithful to the phenomenon that implications relativise (in one way or another) to
all those possible scenarios where the antecedent holds.
Diamonds are of course related to implications (see below), and
the related diamond here would be defined such that $T\models\Diamond\varphi$
iff $\{u\}\models\varphi$ for some $u\in T$.
A slight variant of $\rightarrow$ above could replace $S$
with $\{u\in W\ |\ \{u\}\models\varphi\}$, where $W$ is the
set of all valuations (also ones not in $T$).
This implications works here and there...
Possible worlds provide a fruitful approach to developing the semantics of
implications and other logical operators, but work remains to be done.
A rather natural  possibility is to let the nesting depth of (some) operators
dictate the meaning of a formula in the type hierarchy. 
For example the truth of $\Diamond =\hspace{-1mm}(p,q)$ can be determined
with respect to the set of teams conceived; a suitable team
satistying $=\hspace{-1mm}(p,q)$ is searched from there. 
A formula with nesting depth three involves sets of sets of teams.
(Conjunctions of formulae with different nesting depths may need adjusting,
depending on the formal details desired.)
Many operators, like the implication, possibility, dependencies, the word ``unless,"
etcetera, can be
modelled with more or less success in this way.
%
%
%
%

%
The logic with the Kripke-style 
reading of atoms $=\hspace{-1mm}(r,s)$ where the
accessibility relation is always the total binary relation,
is rather natural. 
It is of course a syntactically
closed logic (free nesting of dependence operators and no negation normal forms).
The  logic can, however, be simulated
(with the same expressivity on the level of models/teams) by a system based on
extended team semantics that has the following grammar.
$$\varphi ::= p\ |\ \neg\varphi\ |\ {\sim}\varphi\ |\
(\varphi_1\vee\varphi_2)\ |\ (\varphi_1\sqcup\varphi_2).$$
Here a team $W$ satisfies $p$ (or $W\models p$) iff $p$
is true at every point $w\in W$. The conncective $\vee$ is
the standard splitjunction and ${\sim}$ is the negation
such that $W\models{\sim}\varphi$ iff $W\not\models\varphi$.
The connective $\sqcup$ is the disjunction such that
$W\models\varphi\sqcup\psi$ iff we have $W\models\varphi$ or $W\models\psi$.
The novel negation $\neg$ can be interpreted such  that
$W\models\neg \varphi$ iff for every $w\in W$ we have $\{w\}\not\models\varphi$.
Intuitively this kind of a negation can be considered to occur for example in
the assertion that ``the days were not rainy," or that ``it was never rainy"
(rather than that ``it is not the case that the days were rainy")
and similar inner negations. Thus both disjunctions and both negations
have natural uses in natural language. (The splitjunction intuitively
states that each world satisfies at least one of  the disjuncts, such as in
``it was raining or shining," rather than ``it was raining or it was shining,"
where in both cases the talk is about multiple days.
The splitjunction corresponds to the the inner reading mode.)
Repeating an argument from \cite{kuusisto3, kuusisto6}, the principle of 
excluded middle can be investigated easily with this logic. Consider the
natural language counterpart of the statement $x=5\ \ \dot{\vee}\ \  \dot{\neg}{x=5}$
where $\dot{\neg} x= 5$ is to be read ``$x$ is not $5$" while $\dot{\vee}$ is to be
read ``or." Now, it is easy to argue that in a reasonable sense ``$\dot{\neg} x = 5$"
does not hold, as we assume $x$ is \emph{not defined} in any
way.\footnote{Indeed, the
excluded middle is very interesting in the context of undefined (or 
not fully defined) statements.} Concluding from
this that ``$x=5$``  holds would of course be unjustified;
the claim ``$x=5$`` can also be argued---rather reasonably---not to hold.
Thus the claim ``$\dot{\neg}\, x = 5\ \dot{\vee}\ x = 5$" is not justified
and arguably does not to hold.
To fully understand the reasoning above, define the set of all 
possible worlds to be the set $W$ of all
functions from an infinite set of variables into $\mathbb{N}$, i.e., $W$ is
the set of variable assignments. We consider
there to be no actual world; $x$ is simply undefined.
We repeat the above reasoning: ``$\dot{\neg} x = 5$" is
interpreted to stand for $W\models\dot{\neg} x = 5$,\ \ \ ``$x = 5$"  is
interpreted to stand for $W\models x = 5$,\ \ \ and 
the claim ``$x=5\ \ \dot{\vee}\ \  \dot{\neg}{x=5}$" is
interpreted to mean $W\models (x=5\ \ \sqcup \ \  \dot{\neg}{x=5})$, which is false.
Note that we can reasonably interpret ``$\dot{\neg} x = 5$" to mean that $W\models \neg x=5$.
Alltogether, ``$x=5\ \ \dot{\vee}\ \  \dot{\neg}{x=5}$" then
claims that $W\models x=5\, \sqcup\, \neg x= 5$, which is indeed false despite the
natural language reading we gave it. Of course
the reading $W\models x=5 \vee \neg x= 5$ gives a true statement,
and this reading also makes good sense.
Other readings of the natural language statement are
also possible, such as $W\models x=5\, \sqcup\, \sim x= 5$,
which is of course also true because $W\models\ \ \sim x=5$.
There is nothing very
nontrivial about all this (before or after analysis).
If the set of proposition symbols considered is
finite, this logic can define all sets of teams (when repetitions of equivalent 
propositional assignments are ignored and the empty team is
included in all definable sets), just like S5 (universal modality) or the version of S5 with
the dependence operator instead of a diamond. Dependence and independence (etc.)
operators can be added. (Also the modal logic version can accommodate independence
atoms etc.). If desired, $\neg$ can also be defined such that $W\models\neg\varphi$
iff for all nonempty subsets $U$ of $W$, we have $U\not\models\varphi$; then another
interesting system arises. These negations concern
concepts such as ``never," with possibly a non-temporal reading
corresponding to \emph{impossibility}.
The second reading of  $\neg\varphi$ is related to it never becoming
conceivable that $\varphi$, while the first reading is related to
the assertion  that $\varphi$
simply never can be true. These could
be characterized as the \emph{subjective} and \emph{objective} reading.
It is worth pointing out that $\neg$ (with
both readings) is related to the denial of the possibility that $\varphi$.
Thus one can define, for example, $W\models\Diamond\varphi$
iff for some $w\in W$, we have $\{w\}\models\varphi$.
Notice that now the empty team does not satisfy $\Diamond\varphi$.
The next obviously natural operator has been first published in the 
first-order context in a paper of Galliani. R\"{o}nnholm
has defined and studied the variant  of it 
(also in the propositional and
modal context) that forces
truth in the empty team. We define $W\models\Diamond'\varphi$ 
iff for some nonempty $U\subseteq W$, we have $U\models\varphi$.
We do not bother ourselves now with
the issue of the empty team, even though it is
important. Instead, we note that \emph{implication
and possibility are intuitively interrelated} such that
for the operators $\Diamond$,$\rightarrow$,$\neg$,$\wedge$,$\bot$
(where these operators are not associated with a formal semantics)
one would typically like to have a semantics where
$\varphi\rightarrow\psi$ is equivalent to 
$\neg\Diamond(\varphi\wedge \neg\psi)$,
and furthermore, $\Diamond\varphi$ is 
equivalent to $\neg(\varphi\rightarrow\bot)$.
This of course suggests (for example)
the implication such that $W\models\varphi\rightarrow\psi$
iff ($\{w\}\models\varphi\Rightarrow\{v\}\models\psi)$ for all $u,v\in W$.
Obviously also the necessity operator $W\models\Box\varphi$
iff $\{u\}\models\varphi$ for all $u\in W$ arises naturally.
Finally, it is worth noting that while $\Diamond'$ is cool, it is of course not
without interesting exotic features,
because for example $\Diamond' p \wedge \Diamond' ( \sim \Diamond' p)$ is
easily satisfiable.
Antirealist approaches are often very natural.
A set of possible worlds (a team) can be interpreted to
be a \emph{possible perspective} in the following sense.
Consider a team $\{w,w'\}$ with two worlds satisfying exactly the same
propositions, with the exception of $p$; assume that $w$ satisfies $p$
while $w'$ does not. Assume that we are in some sense genuinely free 
to define whether $p$ holds or not. For example, $p$ could
state that $\exists x(x\in x)$. (Forget about ZFC here.)
Then $\{w,w'\}$, rather than $w$ or $w'$,
corresponds to our intuitive perspective.
One could add to the framework of possible perspectives
also a team of \emph{forbidden worlds}. The
pair of teams containing a set of possible worlds and a
set of impossible worlds would then be some kind of a
perspective on reality. The two sets 
would not have to exhaust the
space of all worlds. (This would depend on further
interpretational issues.)
Impossible worlds can correspond to
paraconsistent belief sets. Extended truth value sets
are not much different from complex numbers, and
thus they are rather tame objects really when correctly 
conseptualised. Just like it is possible to
measure (in a sense) a negative number,
for example the weight of a piece of gold corresponding to debt, it
possible to measure complex numbers corresponding
to for example the length of one side of a rectangular thin plate of owed gold;
if the area is understood to be $-1$ (due to it corresponding to debt),
the side can be understood to correspond to $i$.
Developing this approach further,
one could  begin, for example, with higher order propositions $D(\varphi)$,
where $\varphi$ is a formula of ordinary propositional logic.
A team $X$ would satisfy $D(\varphi)$ iff
$X$ satisfies $\varphi$ in the sense of team semantics.
The formula $D(\varphi)$ would read that $\varphi$ is
\emph{determinately true}. One could
then use ordinary Boolean logic with this set of higher order
 propositions. Of course the higher order propositions would
not have to be where the type hierarchy stops. One could talk not
only about determinacy (etc.) of primitive statements,
but also about determinacy (etc.) of propositions 
talking about determinacy, and so on ad infinitum.
Dependence would be an interesting extra ingredient (possibly a fundamental one)
in this world of different senses of the excluded middle,
different modes of negation, etc.
There is a trivial Galois connection 
between syntax and semantics.
Let us use it to investigate the meaning of
connectives in team semantics
(in a somewhat rough and \emph{sketchy} way for now).
Let $\mathrm{VAR}$ define the countably infinite
set of first-order variables used in first-order logic.
Fix a vocabulary $V$ and let $\mathcal{C}$
be a class pairs $(M,f)$, where $M$ is a $V$-model and $f$ a
function  $f:\mathrm{VAR}\rightarrow\mathit{Dom}(M)$. A
team could be any subclass of $\mathcal{C}$. We could
now define systems of semantics for these kinds of teams
and suitable logics, but let us do something a bit more particular.
Let $L$ be a fragment of first-order logic.
For each class $S\subseteq L$ of formulae, let 
$t(S)$ be the class $\{ \chi\in L\ |\, S\models_{\mathcal{C}}\chi\ \}$. (Note
that $t(S)$ generally contains open formulae as
well as sentences.) For each class $\mathcal{D}\subseteq\mathcal{C}$,
let $c(\mathcal{D})$ be the class $\{\ M\in\mathcal{C}\ |\ M\models\chi
\text{ for all }\chi\in t(\mathcal{D})\ \}$,
where $t(\mathcal{D}) =  \{\ \chi\in L
\ |\ M\models\chi\text{ for all }M\in\mathcal{D}\ \}$.
Define $\mathfrak{U}$ to be the structure  $(U,\subseteq)$, whose domain $U$ is
the class $\{ c(\mathcal{D})\ |\ \mathcal{D}\subseteq\mathcal{C}\ \}$.
Define also the structure $\mathfrak{T} = (U^d,\subseteq)$,
where $U^d$ is the class $\{\ t(S)\ |\ S\subseteq L\ \}$.
(The contradictory theory and empty class are there.)
%

%
%

%
We then define a semantics for first-order formulae (ignoring quantifiers 
and negation for the time being
for the sake of simplicity). Let $u\in U$ and $\chi\in L$.
Define $(U,\subseteq),u\Vdash\chi$
iff we have $M\models\chi$ for all $M\in u$.
Similarly, let $T\in U^d$, and define $(U^d,\subseteq),T\Vdash\chi$
iff we have $T\models\chi$.
We note that $(U,\subseteq),u\Vdash\chi\vee\psi$
iff there exist $u_1,u_2\in U$ such that
$(U,\subseteq),u_1\Vdash\chi$
and $(U,\subseteq),u_2\Vdash\psi$
and $u \subseteq u_1\cup u_2$.
Similarly, we 
have $(U^d,\subseteq),T\Vdash\chi\vee\psi$
iff there are $T_1,T_2\in U^d$
such that $(U^d,\subseteq),T_1\Vdash\chi $
and $(U^d,\subseteq),T_2\Vdash\psi$
and $T\supseteq t(T_1\cap T_2)$.
Thus there is an obvious choice (concerning formalism)
depending on whether it is desired that information sets (theories) grow or
classes of possible models shrink. Nice modal operators
(quite like in intuitionistic logic and S4) can be defined
and (meta)logical connectives (contradictory negation,
and all that discussed above) added. Forcing systems 
and lattice-based approaches indeed
have a lot of (rather simple and obvious) explanatory power here. Generalized atoms
(in addition to basic modalities)
provide an especially interesting (meta)logical ingredient
that could be added. An obvious collection of modalities to be added are
given by operators such as $\langle\psi\rangle$
which adds $\psi$ to the theory considered and moves to the
obtained theory. Any nice new axioms for ZFC? How about 
$\exists x \exists y \exists z( x\in y\wedge y\in z\wedge x\not\in z)$,
justified by $\emptyset$, $\{\emptyset\}$ and $\{\{\emptyset\}\}$ of course,
and why not $\forall x \exists y \exists z(x\in y\wedge y\in z \wedge x\not\in z)$,
justified by $X,\{X\},\{\{X\}\}$.
\section{Concluding remarks}
We have defined the notions of a generalized atom
and minor quantifier, and we have shown how these notions can be used when defining
extensions and variants of dependence logic.
We have seen that double team semantics can accommodate such
extensions and variants under the same umbrella framework in a natural way.
We have established that double team semantics has a natural game-theoretic counterpart
and discussed issues related to interpretation of logics based on team semantics.
We have put double team semantics into use by
defining the extension $\mathrm{DC}^2$ of $\mathrm{D}^2$ with counting
quantifiers. We have shown that the satisfiability and finite satisfiability problems of $\mathrm{DC}^2$
are complete for $\mathrm{NEXPTIME}$.
Obvious interesting future questions involve investigating logics
that mix different minor quantifiers and generalized atoms.
It will also be interesting to see \emph{how natural}
generalized atoms are in logical investigations.
Phenomena that appear strange arise easily in logics
that belong to the family of independence-friendly logic, often 
because technical operators are
%
%
carelessly associated with intuitions
that arise from the use of the same symbols in first-order logic.
Signaling (see \cite{mann}) is an
example of such a phenomenon.
It remains to be investigated what kinds
of systems embeddable in the double team semantics
are natural, and up to what extent.  
%
%
%
For example the notion of negation 
calls for further analysis in this context.
%
%
%
%
%
%

%
%

%
There already exists a wide range of papers 
on logics based on team semantics.
Subtle changes in semantic choices, such as using
the lax existential quantifier instead of the strict one, lead to logics with different
expressivities. To understand 
related phenomena better, it definitely makes sense
to study systems based on team semantics in a
unified framework. The double team semantics
aims to provide such a framework.
%

%

%

%
%

%
\appendix
\section{Formulae for the translation $\mathrm{DC^2}\rightarrow\mathrm{FOC^2}$}
%

%
%

%
%

%
\subsection{Formulae for $\chi = \exists^{\geq k} x\, \alpha$.}
$\mathit{Dom}(\chi) = \{x,y\}$:
%
%
%
%
%
%
\begin{align*}
&\psi_{\chi}^1\ := \ \forall x\forall y\bigl(\ S_{\chi}(x,y)\
\rightarrow\ \exists^{\geq k} x\, E_{\alpha}^{\mathit{Uf}}(x,y)\, \bigr),\\
%
%
%
&\psi_{\chi}^2\ := \ \forall x\forall y\bigl(\, E_{\alpha}^{\Uf}(x,y)\
\rightarrow\ \exists x\, S_{\chi}(x,y)\, \bigr),\\
&\psi_{\chi}^3\ := \ \forall x\forall y\bigl(\ T_{\chi}(x,y)\
\rightarrow\ \neg\exists^{\geq k} x\, \neg E_{\alpha}^{\Vgp}(x,y)\, \bigr),\\
&\psi_{\chi}^4\ := \ \forall x\forall y\bigl(\, E_{\alpha}^{\Vgp}(x,y)\
\rightarrow\ \exists x\, T_{\chi}(x,y)\, \bigr),\\
&\psi_{\chi}^5\ := \forall x \forall y\bigl( S_{\alpha}(x,y)\
\leftrightarrow\ E_{\alpha}^{\Uf}(x,y)\bigr),\\
&\psi_{\chi}^6\ := \forall x \forall y\bigl(\, T_{\alpha}(x,y)\
\leftrightarrow\  E_{\alpha}^{\Vgp}(x,y)\, \bigr).
\end{align*}
$\mathit{Dom}(\chi)$ is either of the sets $\{x\},\ \emptyset$:
%
%
%
%
\begin{align*}
&\psi_{\chi}^1\ := \ \exists x\, S_{\chi}(x)\
\rightarrow\ \exists^{\geq k} x\, E_{\alpha}^{\mathit{Uf}}(x),\\
%
%
%
&\psi_{\chi}^2\ := \ \exists x\, E_{\alpha}^{\Uf}(x)\
\rightarrow\ \exists x\, S_{\chi}(x),\\
&\psi_{\chi}^3\ := \ \exists x\, T_{\chi}(x)\
\rightarrow\ \neg\exists^{\geq k} x\, \neg E_{\alpha}^{\Vgp}(x),\\
&\psi_{\chi}^4\ := \ \exists x E_{\alpha}^{\Vgp}(x)\
\rightarrow\ \exists x\, T_{\chi}(x),\\
&\psi_{\chi}^5\ :=\ \forall x \bigl(\, S_{\alpha}(x)\
\leftrightarrow\ E_{\alpha}^{\Uf}(x)\, \bigr),\\
&\psi_{\chi}^6\ :=\ \forall x \bigl(\, T_{\alpha}(x)\
\leftrightarrow\ E_{\alpha}^{\Vgp}(x)\, \bigr).
\end{align*}
$\mathit{Dom}(\chi)$ is $\{y\}$:
%
%
%
%
\begin{align*}
&\psi_{\chi}^1\ := \ \forall y\bigl(\ S_{\chi}(y)\
\rightarrow\ \exists^{\geq k} x\, E_{\alpha}^{\mathit{Uf}}(x,y)\, \bigr),\\
%
%
%
&\psi_{\chi}^2\ := \ \forall x\forall y\bigl(\, E_{\alpha}^{\Uf}(x,y)\
\rightarrow\ S_{\chi}(y)\, \bigr),\\
&\psi_{\chi}^3\ := \ \forall y \bigl(\ T_{\chi}(y)\
\rightarrow\ \neg\exists^{\geq k} x\, \neg E_{\alpha}^{\Vgp}(x,y)\, \bigr),\\
&\psi_{\chi}^4\ := \ \forall x\forall y\bigl(\, E_{\alpha}^{\Vgp}(x,y)\
\rightarrow\ T_{\chi}(y)\, \bigr),\\
&\psi_{\chi}^5\ := \forall x \forall y\bigl( S_{\alpha}(x,y)\
\leftrightarrow\ E_{\alpha}^{\Uf}(x,y)\, \bigr),\\
&\psi_{\chi}^6\ := \forall x \forall y\bigl(\, T_{\alpha}(x,y)\
\leftrightarrow\  E_{\alpha}^{\Vgp}(x,y)\, \bigr).
\end{align*}
%
%
%
%
%
%
%
%
%
\subsection{Formulae for $\chi = \exists^{\geq k} y\, \alpha$.}
$\mathit{Dom}(\chi) = \{x,y\}$:
%
%
%
%
%
%
\begin{align*}
&\psi_{\chi}^1\ := \ \forall x\forall y\bigl(\ S_{\chi}(x,y)\
\rightarrow\ \exists^{\geq k} y\, E_{\alpha}^{\mathit{Uf}}(x,y)\, \bigr),\\
%
%
%
&\psi_{\chi}^2\ := \ \forall x\forall y\bigl(\, E_{\alpha}^{\Uf}(x,y)\
\rightarrow\ \exists y\, S_{\chi}(x,y)\, \bigr),\\
&\psi_{\chi}^3\ := \ \forall x\forall y\bigl(\ T_{\chi}(x,y)\
\rightarrow\ \neg\exists^{\geq k} y\, \neg E_{\alpha}^{\Vgp}(x,y)\, \bigr),\\
&\psi_{\chi}^4\ := \ \forall x\forall y\bigl(\,  E_{\alpha}^{\Vgp}(x,y)\
\rightarrow\ \exists y\, T_{\chi}(x,y)\, \bigr),\\
&\psi_{\chi}^5\ := \forall x \forall y\bigl(\, S_{\alpha}(x,y)\
\leftrightarrow\ E_{\alpha}^{\Uf}(x,y)\, \bigr),\\
&\psi_{\chi}^6\ := \forall x \forall y\bigl(\, T_{\alpha}(x,y)\
\leftrightarrow\  E_{\alpha}^{\Vgp}(x,y)\, \bigr).
\end{align*}
If $\mathit{Dom}(\chi)$ is either of the sets $\{y\}, \emptyset$, exactly
the same formulae $\psi_{\chi}^1,...,\psi_{\chi}^6$ are used as in the
case where $\chi = \exists^{\geq k} x\, \alpha$ and $\mathit{Dom}(\chi)$ is $\{x\}$ or $\emptyset$.
\vspace{3mm}
\noindent
$\mathit{Dom}(\chi)$ is $\{x\}$:
%
%
%
%
\begin{align*}
&\psi_{\chi}^1\ := \ \forall x\bigl(\ S_{\chi}(x)\
\rightarrow\ \exists^{\geq k} y\, E_{\alpha}^{\mathit{Uf}}(x,y)\, \bigr),\\
&\psi_{\chi}^2\ := \ \forall x\forall y\bigl(\, E_{\alpha}^{\Uf}(x,y)\
\rightarrow\ S_{\chi}(x)\, \bigr),\\
&\psi_{\chi}^3\ := \ \forall x\bigl(\ T_{\chi}(x)\
\rightarrow\ \neg\exists^{\geq k} y\, \neg E_{\alpha}^{\Vgp}(x,y)\, \bigr),\\
&\psi_{\chi}^4\ := \ \forall x\forall y\bigl(\ E_{\alpha}^{\Vgp}(x,y)\
\rightarrow\ T_{\chi}(x)\, \bigr),\\
&\psi_{\chi}^5\ := \forall x \forall y\bigl( S_{\alpha}(x,y)\
\leftrightarrow\ E_{\alpha}^{\Uf}(x,y)\, \bigr),\\
&\psi_{\chi}^6\ := \forall x \forall y\bigl(\, T_{\alpha}(x,y)\
\leftrightarrow\  E_{\alpha}^{\Vgp}(x,y)\, \bigr).
\end{align*}
\subsection{Formulae for $\chi = \exists^{s} x\, \alpha$.}
Below we let $\exists^{= 1} x\ \psi$ denote
the ($\mathrm{FOC}^2$-expressible) condition
that exactly one $x$ satisfies $\psi$.\\
\vspace{1mm}
\noindent
$\mathit{Dom}(\chi) = \{x,y\}$:
%
%
%
%
%
%
\begin{align*}
&\psi_{\chi}^1\ := \ \forall x\forall y\bigl(\ S_{\chi}(x,y)\
\rightarrow\ \exists^{= 1} x\, E_{\alpha}^{\mathit{Uf}}(x,y)\, \bigr),\\
%
%
%
&\psi_{\chi}^2\ := \ \forall x\forall y\bigl(\, E_{\alpha}^{\Uf}(x,y)\
\rightarrow\ \exists x\, S_{\chi}(x,y)\, \bigr),\\
&\psi_{\chi}^3\ :=\ \forall x\forall y\bigl(\, T_{\chi}(x,y)
\rightarrow \forall x E_{\alpha}^{\Vgp}(x,y)\bigr),\\
&\psi_{\chi}^4\ := \ \forall x\forall y\bigl(\, E_{\alpha}^{\Vgp}(x,y)\
\rightarrow\ \exists x\, T_{\chi}(x,y)\, \bigr),\\
&\psi_{\chi}^5\ := \forall x \forall y\bigl(\, S_{\alpha}(x,y)\
\leftrightarrow\ E_{\alpha}^{\Uf}(x,y)\, \bigr),\\
&\psi_{\chi}^6\ := \forall x \forall y\bigl(\, T_{\alpha}(x,y)\
\leftrightarrow\  E_{\alpha}^{\Vgp}(x,y)\, \bigr).
\end{align*}
$\mathit{Dom}(\chi)$ is either of the sets $\{x\},\ \emptyset$:
%
%
%
%
\begin{align*}
&\psi_{\chi}^1\ := \ \exists x\, S_{\chi}(x)\
\rightarrow\ \exists^{= 1} x\, E_{\alpha}^{\mathit{Uf}}(x),\\
&\psi_{\chi}^2\ := \ \exists x\, E_{\alpha}^{\Uf}(x)\
\rightarrow\ \exists x\, S_{\chi}(x),\\
&\psi_{\chi}^3\ :=\ \exists x\, T_{\chi}(x)
\rightarrow \forall x E_{\alpha}^{\Vgp}(x),\\
&\psi_{\chi}^4\ := \ \exists x E_{\alpha}^{\Vgp}(x)\
\rightarrow\ \exists x\, T_{\chi}(x),\\
&\psi_{\chi}^5\ :=\ \forall x \bigl(\, S_{\alpha}(x)\
\leftrightarrow\ E_{\alpha}^{\Uf}(x)\, \bigr),\\
&\psi_{\chi}^6\ :=\ \forall x \bigl(\, T_{\alpha}(x)\
\leftrightarrow\ E_{\alpha}^{\Vgp}(x)\bigr).
\end{align*}
$\mathit{Dom}(\chi)$ is $\{y\}$:
%
%
%
%
\begin{align*}
&\psi_{\chi}^1\ := \ \forall y\bigl(\ S_{\chi}(y)\
\rightarrow\ \exists^{= 1} x\, E_{\alpha}^{\mathit{Uf}}(x,y)\, \bigr),\\
&\psi_{\chi}^2\ := \ \forall x\forall y\bigl(\, E_{\alpha}^{\Uf}(x,y)\
\rightarrow\  S_{\chi}(y)\, \bigr),\\
&\psi_{\chi}^3\ :=\ \forall y\bigl(\, T_{\chi}(y)
\rightarrow \forall x E_{\alpha}^{\Vgp}(x,y)\bigr),\\
&\psi_{\chi}^4\ := \ \forall x\forall y\bigl(\, E_{\alpha}^{\Vgp}(x,y)\
\rightarrow\ T_{\chi}(y)\, \bigr),\\
&\psi_{\chi}^5\ := \forall x \forall y\bigl(\, S_{\alpha}(x,y)\
\leftrightarrow\ E_{\alpha}^{\Uf}(x,y)\, \bigr),\\
&\psi_{\chi}^6\ := \forall x \forall y\bigl(\, T_{\alpha}(x,y)\
\leftrightarrow\  E_{\alpha}^{\Vgp}(x,y)\, \bigr).
\end{align*}
%
%
%
%
%
%
%
%
%
\subsection{Formulae for $\chi = \exists^{s} y\, \alpha$.}
$\mathit{Dom}(\chi) = \{x,y\}$:
%
%
%
%
%
%
\begin{align*}
&\psi_{\chi}^1\ := \ \forall x\forall y\bigl(\ S_{\chi}(x,y)\
\rightarrow\ \exists^{= 1} y\, E_{\alpha}^{\mathit{Uf}}(x,y)\, \bigr),\\
%
%
%
&\psi_{\chi}^2\ := \ \forall x\forall y\bigl(\, E_{\alpha}^{\Uf}(x,y)\
\rightarrow\ \exists y\, S_{\chi}(x,y)\, \bigr),\\
&\psi_{\chi}^3\ :=\ \forall x\forall y\bigl(\, T_{\chi}(x,y)
\rightarrow \forall y E_{\alpha}^{\Vgp}(x,y)\bigr),\\
&\psi_{\chi}^4\ := \ \forall x\forall y\bigl(\, E_{\alpha}^{\Vgp}(x,y)\
\rightarrow\ \exists y\, T_{\chi}(x,y)\, \bigr),\\
&\psi_{\chi}^5\ := \forall x \forall y\bigl(\, S_{\alpha}(x,y)\
\leftrightarrow\ E_{\alpha}^{\Uf}(x,y)\, \bigr),\\
&\psi_{\chi}^6\ := \forall x \forall y\bigl(\, T_{\alpha}(x,y)\
\leftrightarrow\  E_{\alpha}^{\Vgp}(x,y)\, \bigr).
\end{align*}
If $\mathit{Dom}(\chi)$ is either of the sets $\{y\}, \emptyset$, exactly
the same formulae $\psi_{\chi}^1,...,\psi_{\chi}^6$ are used as in the
case where $\chi = \exists^{s} x\, \alpha$ and $\mathit{Dom}(\chi)$ is $\{x\}$ or $\emptyset$.
\vspace{3mm}
\noindent
$\mathit{Dom}(\chi)$ is $\{x\}$:
%
%
%
%
\begin{align*}
&\psi_{\chi}^1\ := \ \forall y\bigl(\ S_{\chi}(x)\
\rightarrow\ \exists^{= 1} y\, E_{\alpha}^{\mathit{Uf}}(x,y)\, \bigr),\\
&\psi_{\chi}^2\ := \ \forall x\forall y\bigl(\, E_{\alpha}^{\Uf}(x,y)\
\rightarrow\  S_{\chi}(x)\, \bigr),\\
&\psi_{\chi}^3\ :=\ \forall x\forall y\bigl(\, T_{\chi}(x,y)
\rightarrow \forall y E_{\alpha}^{\Vgp}(x,y)\bigr),\\
&\psi_{\chi}^4\ := \ \forall x\forall y\bigl(\, E_{\alpha}^{\Vgp}(x,y)\
\rightarrow\ T_{\chi}(x)\, \bigr),\\
&\psi_{\chi}^5\ :=\ \forall x \forall y\bigl(\, S_{\alpha}(x,y)\
\leftrightarrow\ E_{\alpha}^{\Uf}(x,y)\, \bigr),\\
&\psi_{\chi}^6\ :=\ \forall x \forall y\bigl(\, T_{\alpha}(x,y)\
\leftrightarrow\  E_{\alpha}^{\Vgp}(x,y)\, \bigr).
\end{align*}
\subsection{Formulae for $\chi = \chi_1\vee\chi_2$}
%
%
%
%
%
%
$\mathit{Dom}(\chi) = \{x,y\}$:
\begin{align*}
&\psi_{\chi}^1\ := \forall x\forall y \Bigl(\,  S_{\chi}(x,y)\
\leftrightarrow\ \bigl(S_{\chi_1}(x,y)\, \vee\, S_{\chi_2}(x,y)\bigr)\Bigr),\\
&\psi_{\chi}^2\ := \forall x\forall y \Bigl(\,  T_{\chi}(x,y)\
\leftrightarrow\ T_{\chi_1}(x,y)\Bigr),\\
&\psi_{\chi}^3\ := \forall x\forall y \Bigl(\,  T_{\chi}(x,y)\
\leftrightarrow\ T_{\chi_2}(x,y)\Bigr).
\end{align*}
$\mathit{Dom}(\chi)$ is any of the sets $\{x\}, \{y\}, \emptyset$:
\begin{align*}
%
%
%
&\psi_{\chi}^1\ := \forall x \Bigl(\,  S_{\chi}(x)\
\leftrightarrow\ \bigl(S_{\chi_1}(x)\, \vee\, S_{\chi_2}(x)\bigr)\Bigr),\\
&\psi_{\chi}^2\ := \forall x \Bigl(\,  T_{\chi}(x)\
\leftrightarrow\ T_{\chi_1}(x)\Bigr),\\
&\psi_{\chi}^3\ := \forall x \Bigl(\,  T_{\chi}(x)\
\leftrightarrow\ T_{\chi_2}(x)\Bigr).
\end{align*}
%

%
%
\subsection{Formulae for $\chi = \neg \alpha$}
%
%
%
%
$\mathit{Dom}(\chi) = \{x,y\}$:
\begin{align*}
&\psi_{\chi}^1\ := \ \forall x\forall y \bigl(\ S_{\chi}(x,y)\
\leftrightarrow\  T_{\alpha}(x,y)\, \bigr),\\
&\psi_{\chi}^2\ := \ \forall x\forall y\bigl(\ T_{\chi}(x,y)\
\leftrightarrow\  S_{\alpha}(x,y)\, \bigr).
\end{align*}
$\mathit{Dom}(\chi)$ is any of the sets $\{x\}, \{y\},\emptyset$:
\begin{align*}
&\psi_{\chi}^1\ := \ \forall x\bigl(\ S_{\chi}(x)\
\leftrightarrow\  T_{\alpha}(x)\, \bigr),\\
&\psi_{\chi}^2\ := \ \forall x\bigl(\ T_{\chi}(x)\
\leftrightarrow\  S_{\alpha}(x)\, \bigr).
\end{align*}
\subsection{$\chi$ is an atomic formula}
%
%
%
%
%
%
%
$\chi$ is a first-order atom and $\mathit{Dom}(\chi) = \{x,y\}$:
\begin{align*}
&\psi_{\chi}^1\ :=\ \forall x \forall y\bigl(\, S_{\chi}(x,y)\rightarrow \chi\, \bigr),\\
&\psi_{\chi}^2\ :=\ \forall x \forall y\bigl(\, T_{\chi}(x,y)\rightarrow \neg\chi\, \bigr).
\end{align*}
$\chi$ is a first-order atom and $\mathit{Dom}(\chi)$ is $\{x\}$:
\begin{align*}
&\psi_{\chi}^1\ :=\ \forall x \bigl(\, S_{\chi}(x)\rightarrow \chi\, \bigr),\\
&\psi_{\chi}^2\ :=\ \forall x \bigl(\, T_{\chi}(x)\rightarrow \neg\chi\, \bigr).
\end{align*}
$\chi$ is a first-order atom and $\mathit{Dom}(\chi)$ is $\{y\}$:
\begin{align*}
&\psi_{\chi}^1\ :=\ \forall y \bigl(\, S_{\chi}(y)\rightarrow \chi\, \bigr),\\
&\psi_{\chi}^2\ :=\ \forall y \bigl(\, T_{\chi}(y)\rightarrow \neg\chi\, \bigr).
\end{align*}
If $\chi$ is the formula $\depst(x,y)$, then $\mathit{Dom}(\chi) = \{x,y\}$.
We define
\begin{align*}
&\psi_{\chi}^1\ :=\ \neg\exists x\exists^{\geq 2} y\, S_{\chi}(x,y),\\
&\psi_{\chi}^2\ :=\ \neg\exists x\exists y\, T_{\chi}(x,y).
\end{align*}
If $\chi$ is the formula $\depst(y,x)$, then $\mathit{Dom}(\chi) = \{x,y\}$.
We define
\begin{align*}
&\psi_{\chi}^1\ :=\ \neg\exists y\exists^{\geq 2} x\, S_{\chi}(x,y),\\
&\psi_{\chi}^2\ :=\ \neg\exists x\exists y\, T_{\chi}(x,y).
\end{align*}
If $\chi$ is the formula $\depst(x)$ and $\mathit{Dom}(\chi) = \{x,y\}$, we define
\begin{align*}
&\psi_{\chi}^1\ :=\ \neg \exists^{\geq 2} x\, \exists y\, S_{\chi}(x,y),\\
&\psi_{\chi}^2\ :=\ \neg\exists x\exists y\, T_{\chi}(x,y).
\end{align*}
If $\chi$ is the formula $\depst(x)$ and $\mathit{Dom}(\chi) = \{x\}$, we define
\begin{align*}
&\psi_{\chi}^1\ :=\ \neg \exists^{\geq 2} x\, S_{\chi}(x),\\
&\psi_{\chi}^2\ :=\ \neg\exists x\, T_{\chi}(x).
\end{align*}
If $\chi$ is the formula $\depst(y)$ and $\mathit{Dom}(\chi) = \{x,y\}$, we define
\begin{align*}
&\psi_{\chi}^1\ :=\ \neg \exists^{\geq 2} y\, \exists x\, S_{\chi}(x,y),\\
&\psi_{\chi}^2\ :=\ \neg\exists x\exists y\, T_{\chi}(x,y).
\end{align*}
If $\chi$ is the formula $\depst(y)$ and $\mathit{Dom}(\chi) = \{y\}$, we define
\begin{align*}
&\psi_{\chi}^1\ :=\ \neg \exists^{\geq 2} x\, S_{\chi}(x),\\
&\psi_{\chi}^2\ :=\ \neg\exists x\, T_{\chi}(x).
\end{align*}

\end{document}